\documentclass[12pt, oneside, notitlepage]{amsart}
\usepackage[margin=3cm]{geometry}
\usepackage{amsmath,amssymb,amsthm,graphicx,mathrsfs,bbm,url}
\usepackage{amsthm}
\usepackage{wrapfig}
\usepackage{enumitem}
\usepackage{mathtools}
\usepackage[utf8]{inputenc}
 \usepackage[T1]{fontenc}
\usepackage[usenames,dvipsnames]{color}
\usepackage[colorlinks=true,linkcolor=Red,citecolor=Green]{hyperref}
\usepackage[super]{nth}
\usepackage[open, openlevel=2, depth=3, atend]{bookmark}
\hypersetup{pdfstartview=XYZ}
\usepackage[font=footnotesize]{caption}

\usepackage{graphicx}
\usepackage{epstopdf}
 
\usepackage{hyperref}

\theoremstyle{plain}
\newtheorem{theorem}{Theorem}[section]
\newtheorem*{theorem*}{Theorem}

\newtheorem{lemma}[theorem]{Lemma}
\newtheorem{proposition}[theorem]{Proposition}

\theoremstyle{definition}
\newtheorem{definition}[theorem]{Definition}

\theoremstyle{remark}
\newtheorem{remark}[theorem]{Remark}

\numberwithin{equation}{section}

\newcommand{\C}{\mathbb{C}}
\newcommand{\R}{\mathbb{R}}

\newcommand{\Z}{\mathbb{Z}}
\newcommand{\D}{\mathbb{D}}

\newcommand{\T}{\mathbb{T}}

\newcommand{\HH}{\mathbb{H}}

\newcommand{\eps}{\varepsilon}

\newcommand{\mc}{\mathcal}

\newcommand{\dd}{\mathrm{d}}

\DeclareMathOperator{\vol}{vol}

\DeclareMathOperator{\Op}{Op}
\DeclareMathOperator{\WF}{WF}

\DeclareMathOperator{\supp}{supp}

\DeclareMathOperator{\comp}{comp}

\newcommand{\be}{\begin{equation}}
\newcommand{\ee}{\end{equation}}

\title
[Zonal states and improved $L^\infty$ bounds]
{Zonal states and improved $L^\infty$ bounds for eigenfunctions of magnetic Laplacians on hyperbolic surfaces}

\author{Ambre Chabert}
\address{Département de Mathématiques et Applications, Ecole Normale Supérieure, UMR 8553, 45 rue d’Ulm. 75230 Paris Cedex 05, France}
\email{ambre.chabert@ens.fr}

\author{Thibault Lefeuvre}
\address{Université Paris-Saclay, CNRS, Laboratoire de mathématiques d’Orsay, 91405, Orsay, France}
\email{thibault.lefeuvre1@universite-paris-saclay.fr}

\begin{document}

\begin{abstract}
We establish polynomially improved $L^\infty$ bounds for eigenfunctions of magnetic Laplacians on hyperbolic surfaces in the critical energy regime. We also show that, below the critical energy, the Hörmander bound is saturated by explicit eigenstates, which we call \emph{magnetic zonal states}. These states resemble zonal harmonics on the sphere and equidistribute on Lagrangian tori in phase space.
\end{abstract}

\maketitle

\section{Introduction}

\subsection{Setup} Let $(\Sigma, g)$ be a closed oriented hyperbolic surface of genus $\textsl{g} \geq 2$. Let $L \to \Sigma$ be a complex Hermitian line bundle equipped with a unitary connection $\nabla$. We denote its curvature by $F_\nabla = -i B \vol$, where $B \in C^\infty(\Sigma)$ and $\vol$ is the Riemannian volume of the hyperbolic metric $g$. The function $B$ is called the \emph{magnetic field}. In the following, we will always assume that the magnetic field is \emph{constant}. Notice that, by the Gauss-Bonnet theorem, $2B(\textsl{g} -1) \in \Z$ is equal to the first Chern class of $L$. Up to replacing $(L,\nabla)$ by its dual, we also assume throughout that $B>0$.

The connection $\nabla$ induces a connection $\nabla^{\otimes k}$ on tensor powers $L^{\otimes k} \to \Sigma$ of $L$, where $k \in \Z$, and for $k \leq 0$, $L^{\otimes k} := (L^*)^{\otimes |k|}$, with curvature $F_{\nabla^{\otimes k}} = - i k B \vol$. The \emph{magnetic Laplacian} is then defined as the $k$-dependent family of operators
\[
\Delta_k := \tfrac{1}{2} (\nabla^{\otimes k})^* \nabla^{\otimes k} : C^\infty(\Sigma, L^{\otimes k}) \to C^\infty(\Sigma,L^{\otimes k}).
\]

The first $N_k := \lfloor kB \rfloor$ eigenvalues of $\Delta_k$ are explicit and given by
\begin{equation}
\label{equation:lambdamk}
\lambda_{k,m} := kB(m+\tfrac{1}{2}) - \dfrac{m(m+1)}{2}, \qquad m \in \{0, ..., N_k-1\},
\end{equation}
see \cite[Proposition 2.8]{Charles-Lefeuvre-25}. We are interested in the high-frequency limit of eigenfunctions
\begin{equation}
\label{equation:eigenmodes}
\Delta_k u_k = \mu_k u_k, \qquad \mu_k = k^{2}(E+o(1)), \qquad \|u_k\|_{L^2(\Sigma,L^{\otimes k})}=1,
\end{equation}
as $k \to +\infty$, where $E \geq 0$ is a fixed energy. {\color{black}This corresponds to the semiclassical regime where the magnetic field $kB$ tends to infinity}; the semiclassical parameter is $h = 1/k$.

The defect measures associated with the sequence $(u_k)_{k \geq 0}$ are supported on the energy shell $\{p=E\}$, where $p(x,\xi) := \tfrac{1}{2}|\xi|^2_g$ is the principal symbol of $k^{-2}\Delta_k$. In addition, these measures are also invariant by the \emph{magnetic flow} $(\Phi_t)_{t \in \R}$, which is the Hamiltonian flow on $T^*\Sigma$ generated by the principal symbol $p$ and calculated with respect to the twisted symplectic form
\begin{equation}
    \label{equation:twisted-form}
\Omega := \omega_0 + i\pi^*F_{\nabla},
\end{equation}
where $\omega_0$ is the Liouville $2$-form on $T^*\Sigma$ and $\pi : T^*\Sigma \to \Sigma$ is the footpoint projection, see \cite{Charles-Lefeuvre-25}.

We let $E_c := \tfrac{1}{2}B^2$. Note that for $m = N_k = \lfloor kB \rfloor$, one has $k^{-2} \lambda_{k,N_k} = E_c + \mc{O}(k^{-1})$. According to the value of $E$, the magnetic flow has a very different behaviour in restriction to the energy shells $\{p=E\}$, see \cite[Proposition 2.5]{Charles-Lefeuvre-25}:
\begin{enumerate}[label=(\roman*)]
\item For $0 < E < E_c$, all the orbits of the magnetic flow are \emph{periodic} of period $2\pi (B^2-2E)^{-1/2}$;
\item For $E=E_c$, the magnetic flow is \emph{uniquely ergodic}, that is all trajectories are dense and equidistribute towards the unique smooth flow-invariant measure, called the Liouville measure;
\item For $E > E_c$, the flow is \emph{uniformly hyperbolic} (also called \emph{Anosov} in the literature).
\end{enumerate}

This has deep consequences and constrains the possible measures that may arise as defect measures of the sequence \eqref{equation:eigenmodes}. In particular, it was established in \cite[Theorem 1.2]{Charles-Lefeuvre-25}, that if the sequence $(u_k)_{k \geq 0}$ satisfies the stronger assumption
\begin{equation}
\label{equation:eigenmodes2}
\Delta_k u_k = \mu_k u_k, \qquad \mu_k = k^{2}(E_c+\mc{O}(k^{-\ell})), \qquad \|u_k\|_{L^2(\Sigma,L^{\otimes k})}=1,
\end{equation}
for some real number $\ell > 0$, then there exists $C > 0$ such that for all $a \in C^\infty(\Sigma)$:
\begin{equation}
\label{equation:concentration}
\left|\langle a u_k, u_k \rangle_{L^2(\Sigma,L^{\otimes k})}- \dfrac{1}{\vol(\Sigma)}\int_{\Sigma} a(x) \vol(x)\right| \leq C k^{-\theta \min(\ell,1/15)/4100}\|a\|_{C^{17}(\Sigma)}.
\end{equation}

(A more general estimate was actually established in \cite{Charles-Lefeuvre-25} which holds for observables $a \in C^\infty_{\mathrm{comp}}(T^*\Sigma)$ compactly supported in phase space.) Here $0 < \theta < 1/2$ is any real number satisfying $\theta(1-\theta) \leq \lambda_1(\Sigma)$, the first non-zero eigenvalue of the Laplacian acting on functions.

\subsection{Subcritical energy regime: magnetic zonal states}

The first purpose of this article is to illustrate how the behavior of the magnetic flow on each energy shell $\{p = E\}$ relates to the $L^\infty$ norms of eigenfunctions satisfying \eqref{equation:eigenmodes}. We also discuss the $L^p$ norms of eigenstates in \S\ref{ssection:lp} below. We start with the \textit{subcritical regime} where the magnetic flow is periodic on each energy shell.

\subsubsection{Statement} The Hörmander bound \cite{Hormander-68} asserts that for general elliptic pseudodifferential operators, the $L^\infty$ norm can be bounded by $\|u_k\|_{L^\infty} \leq C k^{(n-1)/2}\|u_k\|_{L^2}$ for some uniform constant $C > 0$, where $n$ is the dimension of the ambient manifold (see also \cite{Levitan-52,Avakumovic-56}). Here $n=\dim(\Sigma) =2$, so the Hörmander bound reads $\|u_k\|_{L^\infty} \leq C k^{1/2}\|u_k\|_{L^2}$ in our context. Let us note that Hörmander's bound applies to the magnetic Laplacian since this bound is microlocal, so it applies to twisted pseudodifferential operators as they are (locally) microlocally conjugated to standard pseudodifferential operators (see \S\ref{ssection:twisted-pseudos}).

{\color{black}Our first theorem is that we can construct a sequence of eigenfunctions saturating the Hörmander bound:

\begin{theorem}[Saturation of $L^\infty$ norm]\label{theorem:magneticzonalstates}
    Let $0 \leq E < E_c$. Then, there exists a sequence $(u_k)_{k\geq 0}$ satisfying \eqref{equation:eigenmodes} such that:
    \begin{equation}
    \label{equation:saturation}
\liminf_{k \to +\infty} k^{-1/2}\|u_k\|_{L^\infty(\Sigma,L^{\otimes k})} > 0.
\end{equation}
\end{theorem}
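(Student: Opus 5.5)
Fix $0 \le E < E_c$ and choose Landau-level indices $m = m_k \in \{0,\dots,N_k-1\}$ with $k^{-2}\lambda_{k,m_k} \to E$. Since $k^{-2}\lambda_{k,m} = B\tfrac{m}{k} - \tfrac12 (\tfrac{m}{k})^2 + \mc{O}(k^{-1})$, we take $m_k/k \to s := B - \sqrt{B^2-2E} \in [0,B)$, so that $m_k \le N_k-1$ for $k$ large. The plan is to produce, inside the explicit eigenspace $\ker(\Delta_k - \lambda_{k,m_k})$, a normalized section which concentrates at a fixed point $x_0\in\Sigma$ with $|u_k(x_0)|\gtrsim k^{1/2}$. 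This is the magnetic analogue of a zonal harmonic. The eigenspace is described via the structure of \cite[Proposition 2.8]{Charles-Lefeuvre-25}: it is the image under $m$ ladder (raising) operators of the space of holomorphic sections of $L^{\otimes k}\otimes K^{-m}$, where $K$ is the canonical bundle. Its dimension $d_{k,m}$ is given by Riemann--Roch, $d_{k,m} = (2kB - 2m - 1)(\textsl{g}-1) \asymp k$ uniformly for $m/k \le s < B$.

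\textbf{Step 1 (reduction to the Bergman kernel).} Let $\Pi_{k,m}$ be the orthogonal projector onto this eigenspace, with Schwartz kernel $\Pi_{k,m}(x,y)$. The zonal state at $x_0$ is
\[
u_k := \Pi_{k,m}(\cdot, x_0)e / \|\Pi_{k,m}(\cdot,x_0)e\|_{L^2},
\]
where $e$ is a unit vector in $L^{\otimes k}_{x_0}$. The reproducing property gives $\|\Pi_{k,m}(\cdot,x_0)e\|^2 = \langle \Pi_{k,m}(x_0,x_0)e,e\rangle$, so $|u_k(x_0)| = \Pi_{k,m}(x_0,x_0)^{1/2}$ (on a line bundle the diagonal value is a nonnegative scalar). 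It therefore suffices to show $\Pi_{k,m}(x_0,x_0) \ge c k$.

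\textbf{Step 2 (diagonal is constant).} Lift to the universal cover $\HH$. Since $B$ is constant, the lifted magnetic Laplacian is invariant under the magnetic translations of $PSL_2(\R)$, and $\Pi_{k,m}$ lifts to a $\Gamma$-periodized sum $\sum_{\gamma\in\Gamma} \widetilde\Pi_{k,m}(x,\gamma y)\cdot(\text{phase})$. Here $\widetilde\Pi_{k,m}$ is the invariant projector onto the $m$-th hyperbolic Landau level, whose diagonal is the constant $\rho_{k,m} = (2kB-2m-1)/(4\pi)$ by Plancherel for the relevant discrete series. Integrating the diagonal also gives the trace identity $\int_\Sigma \Pi_{k,m}(x,x) = d_{k,m}$, hence average value $d_{k,m}/\vol(\Sigma) \asymp k$. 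A pointwise bound needs more than the trace, but a trace argument alone already suffices: since the average of $x\mapsto \Pi_{k,m}(x,x)$ is $\ge ck$, there is a point $x_k$ where it is $\ge ck$. Choosing $x_0 = x_k$, which may depend on $k$, is allowed because we only need $\|u_k\|_{L^\infty}$. So Step 2 reduces to the explicit dimension formula, and the $\liminf$ bound \eqref{equation:saturation} follows.

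\textbf{Main obstacle.} The main difficulty is justifying the dimension count $d_{k,m}\asymp k$ uniformly for $m\le (s+o(1))k$ with $s<B$. This requires that the ladder operators are injective on the lower levels, i.e.\ that the $m$-th level is exactly the image of holomorphic sections of $L^{\otimes k}\otimes K^{-m}$ of degree $2(kB-m)(\textsl{g}-1) > 2\textsl{g}-2$, where Kodaira vanishing makes Riemann--Roch exact. I would take this from \cite[Proposition 2.8]{Charles-Lefeuvre-25} or reprove it via the commutation relation $[\text{lowering},\text{raising}] = $ const $\cdot(kB - \cdot)$, which stays nonzero precisely while $m<kB$. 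A refinement would instead show $u_k$ concentrates microlocally on the Lagrangian torus of periodic orbits through $x_0$. That is not needed for \eqref{equation:saturation}, but the stationary-phase asymptotics of $\widetilde\Pi_{k,m}$ would also give uniformity in $x_0$.
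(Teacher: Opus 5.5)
Your argument is correct, and it takes a genuinely different and much shorter route than the paper.

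You use only three facts:
\begin{enumerate}
\item the normalized kernel section $\Pi_{k,m}(\cdot,x)e/\|\Pi_{k,m}(\cdot,x)e\|$ realizes $\sup\{|u(x)|^2 : u\in\ker(\Delta_k-\lambda_{k,m}),\ \|u\|_{L^2}=1\}=\Pi_{k,m}(x,x)$;
\item the trace identity $\int_\Sigma\Pi_{k,m}(x,x)\,\dd x=d_{k,m}$;
\item the multiplicity $d_{k,m}=(2kB-2m-1)(\textsl{g}-1)$.
\end{enumerate}
With $\vol(\Sigma)=4\pi(\textsl{g}-1)$ this yields $\|u_k\|^2_{L^\infty}\geq(1+o(1))\,k\sqrt{B^2-2E}/(2\pi)$, for all $0\le E<E_c$ at once. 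The lifted-kernel discussion in your Step 2 is never used. For the multiplicity you only need the lower bound $d_{k,m_k}\gtrsim k$, which follows from the Riemann--Roch inequality together with injectivity of the raising operators for $m<kB$.

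The paper uses exactly your object only at $E=0$ (the Bergman kernel). For $0<E<E_c$ it never counts multiplicities. Instead it:
\begin{itemize}
\item builds Gaussian beams $\mathbf{f}_{k,m,z}=h^{-1/4}\Pi_{k,m}\mathbf{e}_{k,m,z}$, using $e^{2i\pi k\mathbf{A}_k}=\mathbf{1}$, a symplectic normal form for $a$ and a Fourier integral operator $T$;
\item averages them against a cutoff $\chi$ on an arc of $C(p_0,E)$;
\item computes the value at $p_0$ by complex stationary phase, and the $L^2$ norm from the overlaps \eqref{equation:ffprime}.
\end{itemize}
This machinery buys an arbitrary fixed point $p_0$ and an explicit microlocal description of the state on $\T^2(p_0,E)$. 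That is what Theorem \ref{theorem:defect} and Lemma \ref{lemma:not-saturated} require. Your point $x_k$ may drift with $k$ and comes with no such information. Heuristically, though, $\Pi_{k,m}(\cdot,x_0)$ is just the paper's average over the whole circle with $\chi\equiv1$.

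Conversely, your viewpoint is the natural one for sharp constants. Suppose you prove $\Pi_{k,m}(x,x)=d_{k,m}/\vol(\Sigma)+o(k)$ uniformly in $x$. On $\Sigma$ the diagonal is only asymptotically constant, so this requires bounding the $\gamma\neq1$ terms of the periodized hyperbolic Landau kernel. You would then get saturation at any fixed $x_0$. Since every eigenfunction at energy $E<E_c$ lies in a single eigenspace, you would also get the matching upper bound. Together these would identify the value $(\sqrt{B^2-2E}/(2\pi))^{1/2}$ of the constant $\Lambda$, which the paper leaves open.
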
}

The eigenfunctions that we construct bear a strong resemblance to the zonal harmonics on the sphere. Indeed, they can be constructed by closely analogous methods. Hence, by analogy with the spherical setting, we shall refer to these eigenfunctions as \textit{magnetic zonal states}. {\color{black}Improvements on $L^\infty$ bounds are usually connected to improvements on the remainder in the Weyl law. However, in our setting, the eigenvalues below the critical energy are completely explicit and given by \eqref{equation:lambdamk}; therefore, one can explicitly count the number of eigenvalues in any given spectral window, and the question of obtaining a sharp remainder in the Weyl law is irrelevant here.}

{\color{black}It has been observed in the literature that maximal $L^\infty$ growth of eigenfunctions is strongly linked to recurence or periodicity of the underlying classical flow, see \cite{sogge2002riemannian,sogge2011blowup}. Here, the magnetic flow is \textit{periodic} on each subcritical energy shell, which should be compared to the periodicity of the geodesic flow on the sphere. Periodicity has been exploited in many previous works to build \textit{quasimodes} saturating Hörmander's bound for Zoll manifolds, using the formalism of \textit{oscillatory functions} on \textit{Lagrangian submanifolds} (see \cite{duistermaat1974oscillatory}). We briefly discuss our construction, notably to compare with existing results on Zoll manifolds, such as \cite{ZELDITCH1997415,zelditch1977wave, Galkowski-Toth-18, AIF_2019__69_4_1757_0}.}

\subsubsection{Construction of the zonal states} The magnetic zonal states with energy $0 < E < E_c$ that saturate Hörmander's bound in \eqref{equation:saturation} are explicitly constructed as follows. Given a point $x_0 \in \Sigma$, the energy shell
\[
C(x_0,E) := \{p=E\} \cap T^*_{x_0}\Sigma
\]
is diffeomorphic to a circle (for $E=0$, $C$ is reduced to a point and one needs to argue differently). As the magnetic flow $(\Phi_t)_{t \in \R}$ is $T_E$-periodic on $\{p=E\}$ with $T_E := 2\pi(B^2-2E)^{-1/2}$, for any $\xi \in C(x_0,E)$, $\Phi_{T_E}(x_0,\xi) = (x_0,\xi)$; additionally, the loop $t \mapsto \Phi_t(x_0,\xi), t \in [0,T_E]$ projects via the footpoint projection $\pi : T^*\Sigma \to \Sigma$ to a smooth immersed loop $\gamma : [0, T_E] \to \Sigma$. This trajectory is called a \emph{magnetic geodesic}.

Given $\xi \in C(x_0,E)$, a Gaussian state $\mathbf{e}_{k,x_0,\xi} \in C^\infty(\Sigma,L^{\otimes k})$ localized at $(x_0,\xi)$ can be used to produce a genuine eigenfunction 
\[
\mathbf{f}_{k,x_0,\xi} := k^{1/4} \Pi_E \mathbf{e}_{k,x_0,\xi} \in C^\infty(\Sigma,L^{\otimes k})
\]
satisfying \eqref{equation:eigenmodes} at energy $E$, by applying a suitable orthogonal projector $\Pi_E$, see \cite[Proposition 4.6]{Charles-Lefeuvre-25} and \S\ref{section:proof} below. The eigenstate $\mathbf{f}_{k,x_0,\xi}$ is a (magnetic) Gaussian beam which is quasi $L^2$ normalized; it is microlocally concentrated on the periodic bicharacteristic generated by $(x_0,\xi)$ in $T^*\Sigma$, {\color{black}with width $k^{-1/2}$, and its norm at $p_0$ is $k^{1/4}$.} One can then define the magnetic zonal state by averaging over all Gaussian beams based at $C(x_0,E)$, that is
\begin{equation}
    \label{equation:magnetic-zonal-state}
    \mathbf{u}_{k,x_0} := k^{1/4} \int_{0}^{2\pi} {\color{black}\chi(\theta)}\mathbf{f}_{k,x_0,R_\theta\xi_0} \dd \theta,
\end{equation}
where $R_\theta : T^*\Sigma \to T^*\Sigma$ is the fiberwise rotation by angle $\theta \in [0,2\pi]$ induced by the orientation, $\xi_0 \in C(x_0,E)$ is arbitrary, {\color{black}and $\chi \in C^\infty(C(x_0,E))$ is a bump function with small support around a point. The construction is done in such a way that, at first, only smooth functions with support in a small open subset of $C(x_0,E)$ are allowed, see \eqref{equation:calor} for a precise definition. This issue can easily be avoided, but such a refinement is not even necessary to construct eigenstates saturating Hörmander's bound.} We do not introduce a formal definition for magnetic zonal states: by this, we simply mean eigenfunctions of the form \eqref{equation:magnetic-zonal-state} saturating Hörmander's $L^\infty$ bound.

{\color{black}The heuristic for the normalizing factor $k^{1/4}$ is that, since the Gaussian beams have width $k^{-1/2}$, one can cover $C(x_0,E)$ by roughly $k^{1/2}$ almost orthogonal Gaussian beams, thus yielding a $k^{-1/4}$ factor to have the $L^2$ normalization. The measure of integration should thus be $k^{-1/2}\dd \theta$ to account for the width of Gaussian beams, overall yielding a factor $k^{1/4}$. In particular, the norm of $\mathbf{u}_{k,x_0}$ at $p_0$ is of size $k^{1/4} \times k^{1/4} = k^{1/2}$ so it has maximal $L^{\infty}$ norm growth. We emphasize that one also needs to make sure all the $\mathbf{f}_{k,x_0,R_\theta\xi_0}(p_0)$ have the same phase in order to avoid destructive interferences.}


The family of eigenstates $u_k := \mathbf{u}_{k,x_0}$ has $L^2$ norm $\|\chi\|_{L^2(C(x_0,E))}+o(1)$ where $C(x_0,E)$ is equipped with the Lebesgue measure of mass $2\pi$ (Lemma \ref{lemma:l2}) and is the one that saturates the $L^\infty$ norm in Theorem \ref{theorem:magneticzonalstates}. Note that $(u_k)_{k \geq 0}$ is microlocally concentrated on an \emph{immersed $2$-torus}
\begin{equation}
\label{equation:torus}
\begin{split}
\T^2(x_0,E) & := \{\Phi_t(x_0,R_\theta\xi_0) ~:~ \theta \in [0,2\pi], t \in [0,T_E]\} \\
&\simeq \R/(T_E\Z) \times \R/(2\pi\Z).
\end{split}
\end{equation}
It turns out that $\T^2(x_0,E)$ is a \textit{Lagrangian immersion} in $T^*\Sigma$ for the twisted symplectic form, with a maximally degenerate caustic over $x_0$ (see Lemma \ref{lemma:lagrangian-torus}). {\color{black}We thus emphasize that an alternative construction of magnetic zonal states would be to first build a WKB quasimode on the Lagrangian torus $\T^2(x_0,E)$ and then apply a suitable spectral projector $\Pi_E$. The fact that the $L^\infty$ bound is saturated at $x_0$ could then be interpreted as a consequence of the torus $\T^2(x_0,E)\to\Sigma$ having a maximally degenerate caustic at $x_0$.}

\subsubsection{Defect measure} {\color{black}There exists a connection between the saturation of Hörmander's bound and defect measures putting mass on certain Lagrangian tori, as studied in \cite{AIF_2019__69_4_1757_0}.} It is thus natural to describe the semiclassical defect measure $\mu$ associated with the sequence $(u_k)_{k \geq 0}$ of magnetic zonal states \eqref{equation:magnetic-zonal-state}. We introduce $R_E \geq 0$ such that
\begin{equation}
\label{equation:radius}
\cosh R_E := \dfrac{B^2+2E}{B^2-2E}.
\end{equation}
Let $q:\mathbb H^2\to\Sigma$ be the covering map, with $q(i)=x_0$, and set
\[
\D(x_0,R_E):=q\big(\mathbb D(i,R_E)\big), \qquad \gamma_E:=q\big(\partial\mathbb D(i,R_E)\big),
\]
where $\D(p,r)$ is the closed geodesic disk of radius $r \geq 0$ centered at $p$. Then $\D(x_0,R_E)$ is the closed metric disk of radius $R_E$ centered at $x_0$, whereas $\gamma_E$ is the immersed fold caustic
of the projection $\T^2(x_0,E)\to\Sigma$. Note that $\gamma_E$ does not necessarily coincide with $\partial \D(x_0,R_E)$.

\begin{theorem}[Semiclassical defect measure of magnetic zonal states]
\label{theorem:defect}
For any energy $0 < E < E_c$, the following holds:
\begin{enumerate}[label=\emph{(\roman*)}]
    \item The (unnormalized) defect measure $\mu_\chi$ associated with the sequence of magnetic zonal states $(u_k)_{k \geq 0}$ defined in \eqref{equation:magnetic-zonal-state} is supported on $\T^2(x_0,E)$ and equal to
    \[
    \mu_\chi=T_E^{-1} \chi(\theta)^2 \dd t \dd \theta;
    \]
    \item There is an $L^2$-normalized sequence $(u_{k(j)})_{j \geq 0}$ of magnetic zonal states of the form \eqref{equation:magnetic-zonal-state} such that $\liminf_{j \to +\infty} k(j)^{-1/2}\|u_{k(j)}\|_{L^\infty(\Sigma,L^{\otimes k(j)})} > 0$ and $(u_{k(j)})_{j \geq 0}$ converges in the semiclassical sense to $\mu := (2\pi T_E)^{-1} \dd t \dd \theta$;
    \item The pushforward $\nu := \pi_* \mu$ is a probability measure on $\Sigma$ such that $\nu = \alpha \cdot \vol_{\Sigma}$ where $\alpha \in L^1(\Sigma,\vol)$ is upper-semicontinuous on $\Sigma \setminus \left(\{x_0\} \cup \gamma_E\right)$, $\supp(\alpha)=\D(x_0,R_E)$, and $\alpha$ is singular at $\{x_0\} \cup \gamma_E$. The singularity at $x_0$ is given by
\[
\alpha(x) \sim_{x \to x_0} \dfrac{\sqrt{2/E}}{2\pi T_E d(x,x_0)}.
\]
\end{enumerate}
\end{theorem}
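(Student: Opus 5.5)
For (i), the plan is to compute $\langle \Op_k(a) u_k, u_k\rangle$ for $a \in C^\infty_{\comp}(T^*\Sigma)$ by expanding the definition \eqref{equation:magnetic-zonal-state} into a double integral
\[
k^{1/2}\int_0^{2\pi}\!\!\int_0^{2\pi} \chi(\theta)\chi(\theta')\,\langle \Op_k(a)\mathbf{f}_{k,x_0,R_\theta\xi_0},\mathbf{f}_{k,x_0,R_{\theta'}\xi_0}\rangle\,\dd\theta\,\dd\theta'.
\]
I would use the description of $\mathbf{f}_{k,x_0,\xi} = k^{1/4}\Pi_E\mathbf{e}_{k,x_0,\xi}$ from \cite[Proposition 4.6]{Charles-Lefeuvre-25}. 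On the periodic shell, $\Pi_E$ acts, up to $\mc{O}(k^{-1/2})$, as the average $T_E^{-1}\int_0^{T_E} e^{-it k^{-1}\Delta_k}e^{it\mu_k/k}\,\dd t$. Hence $\mathbf{f}_{k,x_0,\xi}$ is a superposition of propagated Gaussian coherent states centred at $\Phi_t(x_0,\xi)$. Egorov's theorem for twisted operators then turns each matrix element into a double integral over $t,t'$ of overlaps of coherent states.

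The overlap of two coherent states centred at $\Phi_t(x_0,R_\theta\xi_0)$ and $\Phi_{t'}(x_0,R_{\theta'}\xi_0)$ is Gaussian in the phase-space distance, at scale $k^{-1/2}$. Since $(t,\theta)\mapsto \Phi_t(x_0,R_\theta\xi_0)$ is a local diffeomorphism onto $\T^2(x_0,E)$ (Lemma \ref{lemma:lagrangian-torus}), the off-diagonal pieces are negligible. The diagonal piece is then computed by stationary phase/Gaussian integration in $(t-t',\theta-\theta')$. The powers of $k$ cancel exactly against the $k^{1/4}$ normalisations; the same computation is what gives Lemma \ref{lemma:l2}. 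The result is $T_E^{-1}\int\!\!\int a(\Phi_t(x_0,R_\theta\xi_0))\chi(\theta)^2\,\dd t\,\dd\theta$. Normalising with $a=1$ recovers $\|\chi\|^2_{L^2}$, consistent with Lemma \ref{lemma:l2}.

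The main obstacle is justifying that the phases of the beams are coherent, so that the cross terms with $\theta\neq\theta'$ or $t\neq t'+nT_E$ do not conspire. This requires the Maslov/holonomy phase accumulated along a period to be the same for all $\theta$. I would check this using the rotational symmetry $R_\theta$ at $x_0$, since the curvature is constant and the lift to $\HH^2$ is $SO(2)$-equivariant around $i$. This same symmetry is what gives the $L^\infty$ saturation in Theorem \ref{theorem:magneticzonalstates}.

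For (ii), I would take $\chi_j$ to be a partition-of-unity approximation of $(2\pi)^{-1/2}\mathbf 1$: a finite sum of rotated bumps whose squares sum to $(2\pi)^{-1}$. The sum of the corresponding zonal states is again of the form \eqref{equation:magnetic-zonal-state} by linearity. Coherence at $x_0$ shows its value there is still $\gtrsim k^{1/2}$. A diagonal extraction $k(j)$ then lets $\chi_j^2\to(2\pi)^{-1}$ while the error terms of (i) stay small.

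For (iii), the measure $\nu$ is the pushforward of $(2\pi T_E)^{-1}\dd t\,\dd\theta$ under $(t,\theta)\mapsto\pi\Phi_t(x_0,R_\theta\xi_0)$. I would compute this in $\HH^2$, where magnetic geodesics through $i$ are hypercycles/circles of explicit geodesic radius $r_E$ with $\sinh r_E\propto\sqrt{2E}/\sqrt{B^2-2E}$. Each such circle reaches distance at most $2r_E=R_E$ from $i$, which matches \eqref{equation:radius}. The pushforward is therefore supported in $\D(i,R_E)$. The density is $\alpha=\sum (2\pi T_E)^{-1}|J|^{-1}$ over preimages, where $J$ is the Jacobian of the map. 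Then:
\begin{enumerate}[label=(\alph*)]
\item The Jacobian vanishes on the fold $\gamma_E=\partial\D(i,R_E)$, which gives an inverse square-root blowup that is still integrable.
\item Near $t=0$ the map is approximately polar coordinates with radius $\sqrt{2E}\,t$. The Jacobian is then $\approx\sqrt{2E}\cdot d$, and $\vol\approx d\,\dd d\,\dd\theta$. Also $t=0$ and $t=T_E$ both map to $x_0$; these contribute together since $\dd t$ counts the loop once. Collecting constants gives $\alpha\sim \sqrt{2/E}/(2\pi T_E d)$.
\end{enumerate}
Finally, I would push forward from $\HH^2$ to $\Sigma$ by $q$, summing over preimages.
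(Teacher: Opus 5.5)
Your outline takes a genuinely different route in (i) and (iii), but the way it glues beams over the whole circle $C(x_0,E)$, which is exactly what (ii) needs, has a gap. The obstacle you single out is also not the relevant one.

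\textbf{Comparison of routes.} For (i), the paper never follows coherent states along orbits. It uses the averaging identity \eqref{equation:integral}, $\Pi_{k,m}\Op_k(b)\Pi_{k,m}=\Op_k(\langle b\rangle)\Pi_{k,m}+\mc{O}(k^{-1})$. It then conjugates by the Fourier integral operator $T$ of Lemma \ref{lemma:normal}, which turns $\mathbf{A}_k-m/k$ into $k^{-1}D_{x_1}$. In these coordinates $\Pi_{k,m}$ is an average of translations and the overlaps \eqref{equation:ffprime} are explicit real Gaussians, so no action phases need tracking. Your $t,t'$ double integral with Egorov re-derives this by hand.

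You should also use the exact formula $\Pi_{k,m}=\frac{1}{2\pi}\int_0^{2\pi}e^{-imt}e^{itk\mathbf{A}_k}\,\dd t$ rather than a time average of $e^{-itk^{-1}\Delta_k}$. Since $\lambda_{k,m}$ is quadratic in $m$, the latter is only approximate. An $\mc{O}(k^{-1/2})$ error per coherent state becomes $\mc{O}(1)$ after the two factors $k^{1/4}$ (by the triangle inequality), unless you also prove almost orthogonality in $\theta$.

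For (iii), your circle picture ($\tanh r_E=\sqrt{2E}/B$, $R_E=2r_E$) and the polar Jacobian near $x_0$ are an equivalent, more elementary substitute for the Jacobi-field computation \eqref{equation:b}--\eqref{equation:alpha}. Your constant is correct.

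\textbf{The gap.} The holonomy over a period causes no trouble, since $\Pi_{k,m}$ is an exact projector. Well-separated pieces are killed by microlocal separation, not by phases. What you actually need is twofold:
\begin{enumerate}
\item $\theta\mapsto\mathbf{f}_\theta$ must carry no factor like $e^{ic\theta/h}$, so that near-diagonal overlaps are non-oscillating Gaussians.
\item The values $\mathbf{f}_\theta(x_0)$ must share a single phase.
\end{enumerate}
The paper gets both from a single operator $T$ on a small arc together with \eqref{equation:calculs}. Rotation symmetry can supply this only locally on $\Sigma$, since there is no global rotation. You would have to argue that these quantities involve only short-time propagation near $C(x_0,E)$. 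It also works only if the lift of rotations to $L^{\otimes k}$ acts trivially on the fibre over $x_0$.

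Moreover, being a local diffeomorphism does not make off-diagonal terms negligible. $\T^2(x_0,E)$ is only immersed, and when $\supp\chi$ is large the orbits through the finite set $\mathcal{R}$ are covered twice. This is harmless for the small-support $\chi$ of \eqref{equation:calor}, but otherwise needs a separate estimate.

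Finally, step (ii) as written is inconsistent. By linearity the cutoff of the sum is $\sum_i\chi^{(i)}$, so (i) yields $T_E^{-1}(\sum_i\chi^{(i)})^2$, not $T_E^{-1}\sum_i(\chi^{(i)})^2$. If the pieces come from different local normal forms (all that Lemma \ref{lemma:normal} provides), their sum is not a single zonal state, and the cross terms on overlapping supports are uncontrolled.

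The paper instead takes the $\chi_n^{(i)}$ with disjoint supports in $J_i\setminus\mathcal{R}$. Then the orbit sets $\Lambda_i$ are pairwise disjoint \eqref{equation:faim}, cross terms are $\mc{O}(h^\infty)$, and $(\sum_i\chi_n^{(i)})^2=\sum_i(\chi_n^{(i)})^2$. The necessary gaps only give $\chi_n^2\to1$ weakly, which is why a diagonal argument is required. If you genuinely had one phase-coherent family on all of $C(x_0,E)$, you could drop the partition and take $\chi\equiv(2\pi)^{-1/2}$, but then you must handle the self-intersections.
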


{\color{black}The existence of a sequence of eigenmodes with semiclassical measure given by $\mu_\chi$ or $\mu$ is a straightforward consequence of \cite[Theorem 1.1, item (i)]{Charles-Lefeuvre-25}. Consequently, the point of Theorem \ref{theorem:defect}, item (i) is rather to compute the semiclassical measure associated with the specific sequence constructed in \eqref{equation:magnetic-zonal-state}; the point of item (ii) is to show that one can saturate Hörmander's bound and obtain the Lebesgue measure on $\T^2(x_0,E)$ as the semiclassical defect measure of the sequence at the same time.} Note that $\nu$ has full support on $\Sigma$ if $0 < E < E_c$ is large enough by \eqref{equation:radius} as $R_E \to_{E \to E_c} +\infty$. The function $\alpha$ is explicit, see \eqref{equation:alpha}. The singularity of $\alpha$ near $\gamma_E$ can be completely described as well, and the function $\alpha$ blows up at a rate $\asymp 1/\sqrt{d(x,\gamma_E)}$. However, we postpone this to Proposition \ref{proposition:pushforward} below to avoid burdening the introduction.

Magnetic zonal states are closely analogous to zonal harmonics on the sphere, which saturate Hörmander’s $L^\infty$ bound and are microlocally supported on a Lagrangian torus in phase space, see \cite[Section 4]{Galkowski-Toth-18} for instance. See also \cite{Arnaiz-Macia-22} for a similar phenomenon in the context of harmonic oscillators. However, it is important to note that for classical zonal harmonics the $L^\infty$ norm blows up at two antipodal points on $\mathbb S^2$, a phenomenon caused by the presence of conjugate points. By contrast, for magnetic zonal harmonics the situation is different: the concentration occurs at a single point $x_0$ only (see Lemma \ref{lemma:not-saturated}). This can be naturally explained by the absence of other points on the surface that are ``magnetically'' conjugated to $x_0$.

Finally, let us point out that, as the surface is hyperbolic, there should exist an algebraic method to construct magnetic zonal states using the representation theory of $\mathrm{PSL}(2,\R)$. This is left for future investigation. In particular, this may allow to compute $\Lambda := \limsup_{k \to +\infty} k^{-1/2}\|u_k\|_{L^\infty}$ for sequences of eigenfunctions satisfying \eqref{equation:eigenmodes} at energy $0 \leq E < E_c$. On the sphere, it is known that $\Lambda=(2\pi)^{-1/2}$.

\subsection{Critical energy regime: improved $L^\infty$ bounds}

In the critical energy regime, the \textit{ergodicity} of the magnetic flow on the energy shell $\{E = E_c\}$ leads to a polynomial improvement on the $L^\infty$ norm of eigenfunctions.

\begin{theorem}[$L^\infty$ bounds of magnetic eigenfunctions in the critical regime]
\label{theorem:criticalregime}
    Under the hypothesis \eqref{equation:eigenmodes2}, there exists a constant $C > 0$ such that:
    \begin{equation}
\label{equation:polynomial-improvement}
\|u_k\|_{L^\infty(\Sigma,L^{\otimes k})} \leq C k^{1/2 - \theta \min(\ell,1/15)/155800}.
\end{equation}
\end{theorem}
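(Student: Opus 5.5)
We bound $|u_k(x_0)|$ uniformly in $x_0\in\Sigma$ through a smoothed spectral projector, and split its kernel into a short-time part, which gives Hörmander's bound on a small window, and a long-time part, which the quantitative equidistribution behind \eqref{equation:concentration} controls.

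\textbf{Step 1: reduction to a spectral cluster.} Let $\rho\in\mathcal S(\R)$ with $\rho(0)=1$ and $\hat\rho$ compactly supported. Set
\[
\rho_T(k^{-1}\sqrt{2\Delta_k}-B)=\rho\big(T(k^{-1}\sqrt{2\Delta_k}-B)\big),
\]
where $T=T(k)=k^{\delta}$ with $\delta>0$ to be fixed. Because $\mu_k=k^2(E_c+\mc O(k^{-\ell}))$, we have $\rho_T(\cdot)u_k=(1+o(1))u_k$ provided $\delta<\ell$. It therefore suffices to bound $\sup_{x}\|\rho_T(\cdot)(x,\cdot)\|_{L^2}$, the $L^2$ norm of the kernel in the second variable. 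We may equally bound the diagonal of $\rho_T^2$. By Fourier inversion this diagonal is
\[
\int \widehat{\rho^2}(s/T)\, e^{-isB}\, U_k(s/k)(x,x)\,\frac{ds}{T},
\]
where $U_k(s)=e^{is\sqrt{2\Delta_k}}$ is the half-wave group at scale $1/k$.

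\textbf{Step 2: short and long times.} For $|s|\le s_0$ with $s_0$ small, a standard semiclassical parametrix for the twisted pseudodifferential calculus (\S\ref{ssection:twisted-pseudos}) bounds the contribution by $C k/T$. This is the Hörmander local Weyl term. For $s_0\le|s|\le T$, the kernel $U_k(s/k)(x,x)$ is microlocally a Fourier integral operator whose canonical relation is the magnetic flow $\Phi_s$ on $\{p=E_c\}$. Its value at $(x,x)$ comes from magnetic loops of length $s$ based at $x$. At critical energy these trajectories are horocycles, which are not closed and have no conjugate points. The honest route, however, is to avoid stationary phase on loops and instead \emph{reuse the equidistribution estimate} of \cite{Charles-Lefeuvre-25}. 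The general phase-space version of \eqref{equation:concentration} controls the mixing, at a polynomial rate, of the horocycle-type flow against localized observables. Apply it with $a$ a bump of radius $r=k^{-\beta}$ around $x_0$, so that $\|a\|_{C^{17}}\lesssim k^{17\beta}$. This gives
\[
\int_{B(x_0,r)}|u_k|^2\le C r^2+Ck^{-\theta\min(\ell,1/15)/4100+17\beta}.
\]

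\textbf{Step 3: from local $L^2$ mass to $L^\infty$.} A semiclassical Sobolev/local Weyl estimate with frequency $k$ gives
\[
|u_k(x_0)|^2\lesssim k^2\int_{B(x_0,r)}|u_k|^2 + \text{(tail)}.
\]
This is too lossy as it stands, so instead combine it with the cluster bound from Step 1. The local Weyl law on balls of radius $r\gg k^{-1}$ yields
\[
|u_k(x_0)|^2\lesssim \frac{k}{r}\,\|u_k\|^2_{L^2(B(x_0,Cr))}+k\,k^{-N}.
\]
This is a Sogge-type "small-ball" estimate, proved with the half-wave parametrix up to time $r$. Insert Step 2 to get
\[
|u_k(x_0)|^2\lesssim k\Big(r+r^{-1}k^{-\theta\min(\ell,1/15)/4100+17\beta}\Big).
\]
Choose $\beta$ to balance the two terms. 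Setting $\varepsilon=\theta\min(\ell,1/15)/4100$, balancing gives $\beta\approx\varepsilon/19$ and a gain of roughly $k^{-\varepsilon/38}$ in $|u_k|$. The constants in \eqref{equation:polynomial-improvement} reflect an unoptimized version of this bookkeeping, including the higher $C^{17}$ and phase-space cutoff losses.

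\textbf{Main obstacle.} The delicate point is the small-ball estimate $|u(x_0)|^2\lesssim (k/r)\|u\|^2_{L^2(B(x_0,r))}$ for eigenfunctions of $\Delta_k$ at scale $r=k^{-\beta}$. It requires the magnetic half-wave parametrix up to times $\sim r$, uniformly in $x_0$, including control of the connection's phase in the twisted calculus. A second difficulty is making \eqref{equation:concentration} hold for $k$-dependent observables with the stated $C^{17}$ dependence, which is quoted from \cite{Charles-Lefeuvre-25}.
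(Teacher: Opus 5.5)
Your effective argument (the small-ball bound $|u_k(x_0)|^2\lesssim (k/r)\|u_k\|^2_{L^2(B(x_0,Cr))}$ at $r=k^{-\beta}$, combined with \eqref{equation:concentration} for a bump of radius $r$ and balanced at $\beta=\alpha/19$ with $\alpha=\theta\min(\ell,1/15)/4100$) is exactly the paper's proof; since $4100\cdot 38=155800$, it gives the stated exponent exactly, not an ``unoptimized'' version of it, and Step~1 and the half-wave discussion in Step~2 are never used. The small-ball bound you flag as the main obstacle is Proposition~\ref{proposition:bound} with $\delta=h^{\eps}$ (where $\delta^{-1}h^{-1}$ and $\delta h^{-1-2\eps}$ both equal $k^{1+\eps}=k/r$), which the paper proves not with a magnetic half-wave parametrix but by bounding $|u_h(x_0)|$ by $h^{-1/2}$ times $L^2$ norms of microlocalized pieces on hypersurfaces transverse to the bicharacteristics, then propagating these norms across a $\delta$-neighborhood with a Duhamel argument in flow-box coordinates, treating $\chi_\delta$ as an exotic $S^0_\eps$ symbol.
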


The polynomial improvement \eqref{equation:polynomial-improvement} for the $L^\infty$ norm of eigenfunctions in the critical energy regime is clearly not optimal and we did not attempt to optimize the proof. It is not clear to us what the optimal polynomial gain should be. To the best of our knowledge, this is also the first instance in the literature where one can observe such a drastic change in the behaviour of $L^\infty$ norms of eigenfunctions of elliptic operators depending on the energy level under consideration. In particular, this raises the question of understanding the transition regime for eigenfunctions as the energy tends (from below) to the critical energy.

Refinements of Hörmander's bound have been investigated in several contexts with an emphasis on negatively-curved manifolds, see \cite{Berard-77, Bonthonneau-17, Canzani-Galkowski-23} among other references. However, polynomial improvements, although conjectured in some settings such as negatively-curved surfaces \cite{Sarnak-95}, or completely integrable surfaces \cite{bourgain1993eigenfunction}, are difficult to derive and only known in specific situations, see \cite{Iwaniec-Sarnak-95,Ingremeau-Vogel-24} for instance. In nonnegative curvature, polynomial improvements have been obtained for tori {\color{black}(for which $L^\infty$ bounds boil down to arithmetic estimates, see \cite{germain2023l2} and references therein)} and in some completely integrable cases, see \cite{chabert2025bounds,chabert2025infty}. {\color{black}Also, for completely integrable systems, norms of joint eigenfunctions have been studied notably in \cite{tacy2019p,galkowski2020pointwise}. See also \cite[Theorem 5]{canzani2021eigenfunction} for logarithmic improvements that do not require joint eigenfunctions.}

In the high energy regime $E > E_c$, one should be able to prove a $\sqrt{\log}$ improvement (that is $\|u_k\|_{L^\infty} \leq C (\log k)^{-{\color{black}1/2}} k^{1/2}\|u_k\|_{L^2}$) as in negative curvature by following the same strategy, see \cite{Berard-77,Bonthonneau-17}. This is not investigated in the present article.

The strategy of proof of Theorem \ref{theorem:criticalregime} is first to bound the $L^\infty$ norm of eigenfunctions by their $L^2$ norms on small balls. Although this result is known in the litterature (see Sogge's paper \cite{sogge2016localized},  the results of which we recall below), we give an alternate proof in Section \ref{section:localized-norm}, which yields a weaker bound than Sogge's result, but in a more general setting. Then, we use \eqref{equation:concentration} to bound the $L^2$ norms on small balls.

\color{black}

\subsection{$L^p$ norm of eigenstates} \label{ssection:lp} With regards to the $L^p$ norm of eigenfunctions satisfying \eqref{equation:eigenmodes}, for $p \in (2,+\infty]$, we recall that the bound of Sogge \cite{sogge1988concerning} asserts that, for general elliptic pseudodifferential operators, one has $\|u_k\|_{L^p} \leq C k^{\gamma(p)} \|u_k\|_{L^2}$ where the exponent $\gamma(p)$ is given by:
{\color{black}\[
\gamma(p) =
\begin{cases}
\dfrac{n-1}{2}\left(\dfrac{1}{2}-\dfrac{n}{p}\right),
& \begin{aligned}
2 &\le p \leq p_{\mathrm{ST}},
\end{aligned} \\[0.4em]
\dfrac{n-1}{2} - \dfrac{n}{p}
& \begin{aligned}
p_{\mathrm{ST}} &\le p
                \le +\infty,
\end{aligned}
\end{cases}
\]}
and the two behaviors are separated by the Stein-Thomas exponent $p_{\mathrm{ST}} := 2\cdot \frac{n+1}{n-1}$. Here, $n = \dim(\Sigma) = 2$, so $p_{\mathrm{ST}} = 6$. In our context, we obtain:

\begin{theorem}[$L^p$ bounds of magnetic eigenfunctions] The following holds:
\begin{enumerate}[label=\emph{(\roman*)}]
    \item \emph{\textbf{Low energy regime.}} Let $0 \leq E < E_c$. Then, for any $p\in (2, +\infty]$, there exists a sequence $(u_k)_{k\geq 0}$ satisfying \eqref{equation:eigenmodes} such that
    \[
    \liminf_{k\to +\infty} k^{-\gamma(p)} \|u_k\|_{L^p(\Sigma, L^{\otimes k})} > 0.
    \]
    \item \emph{\textbf{Critical energy regime.}} Under the hypothesis \eqref{equation:eigenmodes2}, if $E = E_c$, for any $p\in (2, +\infty]$, {\color{black}there exists a constant $C > 0$ and $\eps = \eps(\theta,\ell,p) > 0$} such that, for any $(u_k)_{k \geq 0}$ satisfying \eqref{equation:eigenmodes2}, 
    \begin{equation}\label{eqpcritic}
        \|u_k\|_{L^p(\Sigma, L^{\otimes k})} \leq C k^{\gamma(p) - \eps}.
    \end{equation}
\end{enumerate}
\end{theorem}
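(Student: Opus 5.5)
For item (i), I will use two distinct families of eigenstates, because $\gamma(p)$ has two regimes separated by $p_{\mathrm{ST}}=6$. Throughout, $\gamma(p)=\tfrac12-\tfrac2p$ for $p\geq 6$ and $\gamma(p)=\tfrac12(\tfrac12-\tfrac1p)$ for $2\le p\le 6$.

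\textbf{Regime $6\le p\le\infty$: magnetic zonal states.} I take $u_k=\mathbf{u}_{k,x_0}$ from \eqref{equation:magnetic-zonal-state}, normalized via Lemma \ref{lemma:l2}. The input is a lower bound $|u_k(x)|\geq c\,k^{1/2}$ for $d(x,x_0)\le \delta k^{-1}$ with $\delta>0$ small and fixed. The proof of Theorem \ref{theorem:magneticzonalstates} should give this: there one shows $|u_k(x_0)|\asymp k^{1/2}$ using the stationary-phase description of the Gaussian beams near $x_0$ and their aligned phases. Since $k^{-1/2}\Delta_k$-type Bernstein estimates give $\|\nabla u_k\|_{L^\infty}\lesssim k\cdot k^{1/2}$, the lower bound persists on a ball of radius $\delta k^{-1}$. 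Integrating,
\[
\|u_k\|_{L^p}^p\gtrsim k^{p/2}k^{-2},
\]
which gives $\|u_k\|_{L^p}\gtrsim k^{1/2-2/p}=k^{\gamma(p)}$.

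\textbf{Regime $2<p\le 6$: a single Gaussian beam.} I take $u_k=\mathbf{f}_{k,x_0,\xi_0}$, which is $L^2$-normalized up to $o(1)$. It is concentrated in a $k^{-1/2}$-tube around the closed magnetic geodesic $\gamma$ and has amplitude $\asymp k^{1/4}$ there. The needed lower bound on $|u_k|$ on a set of measure $\asymp k^{-1/2}$, namely the tube over a fixed arc of $\gamma$ where the beam has a nondegenerate Gaussian profile, comes from the explicit construction in \S\ref{section:proof}, modulo $\mathcal O(k^{-\infty})$ projector errors. This gives
\[
\|u_k\|_{L^p}\gtrsim k^{1/4}k^{-1/(2p)}=k^{\gamma(p)}.
\]
The main obstacle is making this lower bound on the beam rigorous after applying $\Pi_E$. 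I would try to show that $\Pi_E\mathbf{e}_{k,x_0,\xi_0}$ equals a WKB Gaussian beam plus $o(1)$ in $L^\infty$ after rescaling, reusing the lemmas behind Theorem \ref{theorem:defect}.

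\textbf{Item (ii).} Interpolation suffices. Theorem \ref{theorem:criticalregime} gives $\|u_k\|_{L^\infty}\le Ck^{1/2-\eta}$, and Sogge's bound gives $\|u_k\|_{L^6}\le Ck^{1/6}$. For $6\le p<\infty$, I would use
\[
\|u\|_p^p\le\|u\|_\infty^{p-6}\|u\|_6^6,
\]
which gives
\[
\|u_k\|_{L^p}\lesssim k^{(1/2-\eta)(1-6/p)+1/p}=k^{\gamma(p)-\eta(1-6/p)}.
\]
This is a genuine gain for $p>6$. For $2<p\le6$, and for $p=6$ itself, interpolating against $L^\infty$ alone yields no gain, since the critical Sogge exponent is saturated by tube concentration rather than point concentration. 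Here I would instead use the localized approach. Cover $\Sigma$ by $k^{-1/2}\times 1$ tubes, or rather balls of radius $r=k^{-\beta}$. The equidistribution estimate \eqref{equation:concentration}, microlocalized as in Charles--Lefeuvre, gives $\|u_k\|_{L^2(B)}^2\lesssim r^2$ up to $k^{-\rho}$ errors. Then a localized Sogge $L^p$ bound on $B(x,r)$, Sogge's local version for $p\le6$ in the spirit of \cite{sogge2016localized}, rescaled, gives
\[
\|u_k\|_{L^p(B)}\lesssim k^{\gamma(p)}r^{-1/2}\|u_k\|_{L^2(B_{2r})}.
\]
Summing over $\asymp r^{-2}$ balls with $\ell^2\to\ell^p$ and optimizing $\beta$ against $\rho$ yields a polynomial $\eps$. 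I expect $p=6$ and the justification of the local $L^p$ bound for the twisted operator to be the delicate point. For those I would conjugate locally to a standard semiclassical operator as in \S\ref{ssection:twisted-pseudos}.
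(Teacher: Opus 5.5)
Your outline follows the paper's sketch. It uses magnetic zonal states for $p\ge 6$ and Gaussian beams, which are exact eigenfunctions here, for $p\le 6$. For (ii) it uses the improved $L^\infty$ bound together with a localized Sogge-type bound fed by the small-ball $L^2$ estimates from \eqref{equation:concentration}.

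Two of your variations are fine.
\begin{itemize}
\item Transporting $|u_k(x_0)|\asymp k^{1/2}$ to a $k^{-1}$-ball via a gradient bound is a hands-on version of the paper's Bernstein inequality $\|u_k\|_{L^\infty}\lesssim k^{2/p}\|u_k\|_{L^p}$. To justify $\|\nabla u_k\|_{L^\infty}\lesssim k^{3/2}$, apply Hörmander's bound to the compactly microlocalized $k^{-1}\nabla u_k$ and use Kato's inequality $|d|u_k||\le|\nabla u_k|$. Sobolev embedding alone only gives $k^2$.
\item For $p>6$ in (ii), interpolating the improved $L^\infty$ bound against the \emph{unimproved} $L^6$ bound is legitimate. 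It even spares you the localized $L^6$ estimate in that range.
\end{itemize}
For the beams you can also drop the pointwise lower bound you flagged as the main obstacle. By Hölder, $L^2$ mass $\gtrsim 1$ in a tube of area $\lesssim k^{-1/2}$ already gives $\|u_k\|_{L^p}\gtrsim k^{1/4-1/(2p)}$. That mass localization follows because the beam is a time average of coherent states propagated for bounded times, and these are almost orthogonal at time separations $\gg k^{-1/2}$.

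Two steps need repair.

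\textbf{The case $E=0$.} The statement includes $E=0$, where $C(x_0,0)$ is a single point, so neither zonal states nor Gaussian beams are defined. Use instead, as the paper does, ground-state coherent states: normalized Bergman kernel sections peaked at $x_0$. These have modulus $\asymp k^{1/2}$ on a ball of radius $\asymp k^{-1/2}$, hence $\|u_k\|_{L^p}\gtrsim k^{1/2-1/p}\ge k^{\gamma(p)}$ for every $p>2$.

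\textbf{The range $2<p\le 4$ in (ii).} Within $2<p\le 6$, the local bound with prefactor $r^{-1/2}$ is the correctly rescaled one only at $p=6$; for $p<6$ rescaling gives the factor $r^{-3/4+3/(2p)}$. Inserted into your $\ell^2\to\ell^p$ summation, the $r^{-1/2}$ version gives $\|u_k\|_{L^p}\lesssim k^{\gamma(p)}r^{-1/2}(r+k^{-\rho/2})^{1-2/p}$. This is a gain only if $p>4$, so your claimed polynomial $\eps$ fails for $2<p\le 4$ as written. There are two fixes:
\begin{itemize}
\item Prove the case $p=6$ and interpolate with $\|u_k\|_{L^2}=1$, which works because $\gamma$ is affine in $1/p$ on $[2,6]$ with $\gamma(2)=0$. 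This is the paper's route.
\item Use the correctly rescaled local factor $r^{-3/4+3/(2p)}$, which produces a gain $r^{\gamma(p)}$.
\end{itemize}
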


Let us briefly explain how this theorem follows from Theorem \ref{theorem:magneticzonalstates} and existing results in literature. In the low energy regime, for $2 \leq p\leq p_{\mathrm{ST}}$, the existence of a sequence of quasimodes which saturate Sogge's bound is well-known for elliptic pseudodifferential operators in the presence of a closed stable bicharacteristic of the principal symbol, since one can construct Gaussian beams along the closed bicharacteristic, see \cite{germain2023l2}. In our context, the construction adapts since, in the low energy regime, all bicharacteristics are periodic with the same period $2\pi$, hence are closed and stable. Moreover, the Gaussian beams are exact eigenfunctions, so the construction actually yields a sequence of eigenfunctions saturating Sogge's bound. In addition, for $p_{\mathrm{ST}} \leq p \leq +\infty$, it is a general fact that any sequence of eigenmodes which saturate Hörmander's $L^{\infty}$ bound actually saturates Sogge's $L^p$ bound. Indeed, since eigenfunctions are localized in phase space, they satisfy, for any $p \geq 2$, the Bernstein-type inequality
\[\|u_h\|_{L^{\infty}} \lesssim h^{-\frac{2}{p}} \|u_h\|_{L^p}.\]
A proof of this fact in the Euclidean setting is given in \cite[Lemma 2.1]{Bahouri2011} for the flat Laplacian. This extends to any elliptic selfadjoint pseudodifferential operator, after localization in a chart, and bounding the remainder terms with commutator estimates, using for example \cite[Lemma 2.4]{wang2019strichartz}.

{\color{black}That a quantum ergodic behavior leads to improved $L^p$ bounds for small values of $p$ was already observed in the literature, see \cite{sogge2014chapter,blair2017refined,hezari2016lp} for example. In the critical energy regime, to avoid lengthening the present article, we only briefly outline the proof for small $p$, as it follows from an extension of the argument in \cite{sogge2016localized} together with arguments developed in the present paper. First, one needs only prove polynomial improvements for $p = \infty$ and $p = p_{\mathrm{ST}} = 6$, since other cases then follow by interpolation. In the case of an elliptic pseudodifferential operator, \cite[Theorem 1.1, Equation (3.3)]{sogge2016localized} yields the following bound of the $L^p$ norm of eigenmodes, $p = 6, \infty$, by their localized $L^2$ norm on small balls. In the next statement, $(M,g)$ is a compact boundaryless Riemannian surface, $u_\lambda$ is an eigenfunction of $\sqrt{-\Delta_g}$ with eigenvalue $\lambda$, $\mathrm{inj}(M)$ denotes the injectivity radius of $(M,g)$, and $B_r(x)$ is the geodesic ball centered at a point $x\in \Sigma$ of radius $r$:

\begin{align*}
    \|u_\lambda\|_{L^{p_{ST}}(M)} & \leq C \lambda^{\gamma(6)} \left(r^{-\frac{3}{4}} \sup_{x \in M} \|u_\lambda\|_{L^2(B_r(x))}\right)^{2/3}, &  \lambda^{-1} \leq r \leq \mathrm{inj}(M) \\
    \|u_\lambda\|_{L^\infty(M)} & \leq C \lambda^{\gamma(\infty)} \sup_{x\in M} r^{-\frac{1}{2}} \|u_\lambda\|_{L^2(B_r(x))}, & \lambda^{-1} \leq r \leq \mathrm{inj}(M).
\end{align*}

Modulo small error terms of the size $\mc{O}(h^\eta)$, $\eta > 0$, and perhaps with a restriction on the allowed size of the geodesic balls, it is likely that one can generalize this result in our context, i.e. for twisted semiclassical pseudodifferential operators with principal symbol $p$ satisfying $\partial_\xi p(x,\xi) \neq 0$. We may then use the improved localized $L^2$ estimates proved in \S\ref{improvsmallball} to finally obtain a polynomial improvement for $p = 6, \infty$.

As the proof of Sogge is really written out in the context of standard elliptic selfadjoint pseudodifferential operators, we believe that one cannot directly apply it to our setting, although it probably adapts. Hence, we have chosen to give an alternative proof of a weaker version of Sogge's result for $p = \infty$ in \S\ref{section:localized-norm}, and we do not write out the proof of $p = 6$ in order not to lengthen the present article.}

\color{black}

\subsection{Organization of the paper} In \S\ref{section:preliminaries}, we recall standard facts on the semiclassical calculus of pseudodifferential operators on line bundles $L^{\otimes k} \to \Sigma$ (with semiclassical parameter $h:=1/k$). We also recall Weinstein's averaging method for the magnetic Laplacian, together with some geometric and dynamical properties of the magnetic flow on $T^*\Sigma$. In \S\ref{section:localized-norm}, we prove a general estimate for the $L^\infty$ norm of eigenfunctions of semiclassical pseudodifferential operators in terms of a localized $L^2$ norm which may be of independent interest. Finally, Theorems \ref{theorem:magneticzonalstates}, \ref{theorem:defect} and \ref{theorem:criticalregime} are proved in \S\ref{section:proof}.

\medskip

\noindent \textbf{Acknowledgement:} We thank Yves Colin de Verdière for suggesting to study $L^\infty$ norms of magnetic eigenfunctions, and Laurent Charles for fruitful discussions. We are also grateful to Maxime Ingremeau and Gabriel Rivière for comments on an earlier version of this draft. {\color{black}We are grateful to the anonymous referees for their valuable comments which improved the presentation of this manuscript.} This project was supported by the European Research Council (ERC) under the European Union’s Horizon 2020 research and innovation programme (Grant agreement no. 101162990 — ADG). 

\medskip

\noindent \textbf{Compliance with Ethical Standards:} The authors report that there is no conflict of interest to declare. The authors have no financial or proprietary interests in any material discussed in this article.

\section{Preliminaries} \label{section:preliminaries}

\subsection{Twisted pseudodifferential operators} \label{ssection:twisted-pseudos}

We briefly recall the main properties of the algebra of twisted semiclassical operators on $\Sigma$, first introduced by Charles in \cite{Charles-00}. For a detailed study, we refer to \cite[Chapter 3, Section 3.2]{Cekic-Lefeuvre-24}. Throughout this paragraph, $\Sigma$ denotes a closed manifold, $L \to \Sigma$ is a Hermitian complex line bundle equipped with a unitary connection $\nabla$. {\color{black}For $m \in \R$, let $S^m(T^*\Sigma)$ denote the class of symbols of order $m$ (see \cite[Section E.1.2]{Dyatlov-Zworski-19}), and $\Psi^m_h(\Sigma)$ be the corresponding set of standard semiclassical pseudodifferential operators with semiclassical parameter $h > 0$ (see \cite[Section E.1.7]{Dyatlov-Zworski-19}).}

\begin{definition}[Twisted semiclassical $\Psi$DOs] A family of operators
\[
\mathbf{P}_k : C^\infty(\Sigma, L^{\otimes k}) \to C^\infty(\Sigma, L^{\otimes k})
\]
is a family of \emph{twisted} semiclassical pseudodifferential operators of order $m \in \R$ if the following holds: for any small contractible open set $U \subset \Sigma$, any trivializing section $s \in C^\infty(U,L)$ such that $|s|=1$, and any cutoff function $\chi \in C^\infty_{\comp}(U)$ there exists $P_k \in \Psi^m_{k^{-1}}(\Sigma)$ a family of standard semiclassical pseudodifferential operators with semiclassical parameter $h=1/k$ such that for all $f \in C^\infty_{\comp}(U)$:
\[
\chi \mathbf{P}_k(f s^{\otimes k}) = P_k(f) s^{\otimes k}.
\]
\end{definition}

The set of all such operators form an algebra, that is it is stable under multiplication and addition. The principal symbol of $\mathbf{P}_k$ is well-defined and independent of the choice of trivializing section $s$ if we set
\begin{equation}
\label{equation:psymbol}
\sigma_{\mathbf{P}_k}(x,\xi) := \sigma_{P_k}(x,\xi+\beta(x)),
\end{equation}
where $\beta \in C^\infty(U,T^*U)$ is the connection $1$-form such that $\nabla s = -i\beta \otimes s$. In addition, all standard results of semiclassical analysis still hold in this context (invertibility of elliptic operators, Calderon-Vaillancourt, Egorov, etc.), see \cite[Proposition 3.2.3]{Cekic-Lefeuvre-24}.

Conversely, given a symbol $p \in S^m(T^*\Sigma)$, one can define a magnetic quantization as follows. Let $g$ be an arbitrary background metric on $\Sigma$, $\chi \in C^\infty(\Sigma \times \Sigma)$ a smooth function equal to $1$ near the diagonal $\Delta \subset \Sigma \times \Sigma$ and supported in $\{d(x,y) < \iota(g)/2026\}$, where $\iota(g)$ is the injectivity radius of the metric $g$. For $k \geq 0$, and $x,y \in \mathrm{supp}(\chi)$, let $\tau^{\color{black}(k)}_{x \to y} : L^k_x \to L^k_y$ denote the parallel transport with respect to $\nabla^{\otimes k}$ along the unique $g$-geodesic joining $x$ to $y$. We can then quantize
\[
\mathbf{P}_k := \Op_k(p) : C^\infty(\Sigma,L^{\otimes k}) \to C^\infty(\Sigma,L^{\otimes k})
\]
by setting for $f \in C^\infty(\Sigma, L^{\otimes k})$:
\[
\Op_{k}({\color{black}p}) f(x) := \dfrac{1}{{\color{black}(2\pi k^{-1})^2}} \int_{\Sigma} \int_{T^*_x\Sigma} e^{-ik\xi(\exp^{-1}_x(y))} {\color{black}p}(x,\xi) \tau^{\color{black}(k)}_{y \to x}(f(y)) \chi(x,y) \dd\xi \dd\vol_{\Sigma}(y),
\]
where $\dd \vol_{\Sigma}(y)$ is the Riemannian volume, and $\dd\xi$ the induced volume in the fibers of $T^*\Sigma$. It is a standard calculation to verify that
\[
\sigma_{\Op_{k}(p)} = [p] \in S^m(T^*\Sigma)/k^{-1}S^{m-1}(T^*\Sigma).
\]

Finally, given a sequence $(u_k)_{k \geq 0}$ of states $u_k \in C^\infty(\Sigma,L^{\otimes k})$ such that $\|u_k\|_{L^2}=1$, we say that the measure $\mu$ on $T^*\Sigma$ is a semiclassical defect measure associated with the sequence if, up to extraction, the following holds: for all $p \in C^\infty_{\comp}(T^*\Sigma)$,
\[
\langle\Op_k(p)u_k,u_k\rangle_{L^2(\Sigma,L^{\otimes k})} \to_{k \to +\infty} \int_{T^*\Sigma} p(x,\xi) \dd\mu(x,\xi).
\]
We denote this convergence by $u_k \rightharpoonup_{k \to +\infty} \mu$.

\subsection{The averaging method} \label{ssection:averaging} The following paragraph is inspired by Weinstein's averaging method \cite{Weinstein-77} and summarizes \cite[Section 4.1]{Charles-Lefeuvre-25}.

Let $0 \leq E < E_c$. For $0 \leq m \leq N_k = \lfloor kB \rfloor$, we let $\Pi_{k,m}$ be the orthogonal projector (in $L^2(\Sigma,L^{\otimes k})$) onto the eigenspace of $\Delta_k$ associated with the eigenvalue $\lambda_{k,m}$ (see \eqref{equation:lambdamk}). Let 
\[
\mathbf{A}_k := k^{-1} \sum_{m=0}^{N_k-1} m \Pi_{k,m}
\] 

Observe that, by construction, the spectrum of $k \mathbf{A}_k$ is contained in $\Z_{\geq 0}$. In particular, this implies that $e^{2i\pi k \mathbf{A}_k} = \mathbf{1}$. {\color{black}In addition, it satisfies
\[
\Pi_{k,m} = \dfrac{1}{2\pi} \int_0^{2\pi} e^{-imt} e^{itk\mathbf{A}_k} \dd t.
\]
The connection with the magnetic Laplacian is that, using \eqref{equation:lambdamk}, it satisfies the algebraic identity
\begin{equation}
    \label{equation:algebraic-identity}
    k^{-2}\Delta_k = B(\mathbf{A}_k + \tfrac{1}{2}k^{-1})-\tfrac{1}{2}\mathbf{A}_k(\mathbf{A}_k+k^{-1}),
\end{equation}
where the identity holds on
\[
\bigoplus_{m=0}^{\lfloor k B\rfloor - 1} \ker(\Delta_k - \lambda_{k,m}),
\]
see \cite[Equation (4.4)]{Charles-Lefeuvre-25} for details.

We emphasize that, formally speaking, $\textbf{A}_k$ is \emph{not} a twisted semiclassical pseudodifferential operator, due to the sharp spectral cutoff involved in its definition. However, letting $D^*\Sigma := \{(x,\xi) \in T^*\Sigma ~:~ |\xi| < B\}$, and using \eqref{equation:algebraic-identity}, it is natural to introduce the ``principal symbol'' of $\textbf{A}_k$ as the function $a$ on $D^*\Sigma$ defined through the relation $p(x,\xi) = \frac{1}{2}|\xi|^2 = \beta(a(x,\xi))$, where $\beta : s \in [0,B]\mapsto Bs - \frac{1}{2} s^2 \in [0,\frac{1}{2} B^2]$, see \cite[Section 4.1.1]{Charles-Lefeuvre-25}. It can then be shown that for any $\chi \in C_{\mathrm{comp}}^\infty(D^*\Sigma)$, $\Op_k(\chi) \textbf{A}_k$ \emph{is} a twisted compactly supported semiclassical pseudodifferential operator with semiclassical principal symbol $\chi a$, see \cite[Lemma 4.1]{Charles-Lefeuvre-25}.

\begin{remark}
\label{remark:cutoff}
In the following, with a slight abuse of notation, we omit the cutoff $\chi$ and regard $\textbf{A}_k$ as a genuine twisted semiclassical operator. This will not affect the proofs as we will be working on energy shells $E < E_c$, that is away from the boundary of $D^*\Sigma$. In particular, this fact will be used in the following form: for any function $(u_k)_{k \geq 0}$ with semiclassical wavefront set $\WF_k(u_k)$ contained in a \emph{compact} subset $K \subset D^*\Sigma$, one has $\textbf{A}_k u_k = \Op_k(\chi) \textbf{A}_k u_k + \mc{O}_{C^\infty}(k^{-\infty})$, where $\chi \in C^\infty_{\comp}(D^*\Sigma)$ is equal to $1$ on an open neighborhood of $K$.
\end{remark}}

The Hamiltonian flow $(\Phi_t^a)_{t\in \R}$ generated by $a$ (with respect to the twisted symplectic form $\Omega$ introduced in \eqref{equation:twisted-form}) on $D^*\Sigma$ is $2\pi$-periodic. It is a reparametrization of the Hamiltonian flow $(\Phi_t)_{t \in \R}$ of $p$ which is $T_E$-periodic with $T_E = 2\pi(B^2-2E)^{-1/2}$. We will also need a fact taken from \cite[Lemma 4.3]{Charles-Lefeuvre-25}. Given $f \in C^\infty_{\comp}(D^*\Sigma)$, we let
\[
\langle f \rangle(z) := \dfrac{1}{2\pi} \int_0^{2\pi} f(\Phi_t^a(z)) \dd t.
\]
Fix $\eps > 0$. We then have, for all $0 \leq m \leq (1-\eps)kB$, and $b \in C^\infty_{\comp}(D^*\Sigma)$:
\begin{equation}
    \label{equation:integral}
    \Pi_{k,m} \Op_k(b) \Pi_{k,m} = \Op_k(\langle b \rangle) \Pi_{k,m} + \mc{O}_{L^2}(k^{-1}).
\end{equation}
{\color{black}
We emphasize that $\eps > 0$ avoids the problem of the spectral cutoff raised in Remark \ref{remark:cutoff}.}


\subsection{Lagrangian tori}

\label{ssection:lagrangian-tori}

Throughout this section, we still assume that ${\color{black}0 <} E < E_c$, that is $B^2-2E > 0$. Recall from \eqref{equation:torus} that
\[
\T^2(x_0,E) = \{\Phi_t(x_0,R_\theta \xi_0) ~:~ \theta \in [0,2\pi), t \in [0,T_E]\}
\]
is an immersed $2$-torus (where $T_E := 2\pi(B^2-2E)^{-1/2}$ is the period of the magnetic flow on the energy shell $\{p=E\}$). {\color{black}We refer to Figure \ref{figure:2} for an illustration of $\T^2(x_0,E)$ on the universal cover of the surface.}

\begin{lemma}
\label{lemma:lagrangian-torus}
The immersed $2$-torus $\T^2(x_0,E)$ is Lagrangian for the twisted symplectic form $\Omega$.
\end{lemma}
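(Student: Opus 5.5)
The plan is to prove that $\Omega$ vanishes on the tangent planes of the immersion by a standard ``flow-out'' argument. Consider the map
\[
\iota : \R/(T_E\Z) \times \R/(2\pi\Z) \to T^*\Sigma, \qquad \iota(t,\theta) := \Phi_t(x_0,R_\theta\xi_0).
\]
Its image is $\T^2(x_0,E)$. It is the flow-out under $(\Phi_t)$ of the circle $C(x_0,E) = \{(x_0,R_\theta\xi_0)\}$. Since $\dim T^*\Sigma = 4$ and the parameter space is $2$-dimensional, the torus is Lagrangian once two things hold: $\iota$ is an immersion, and $\iota^*\Omega = 0$. For $\iota^*\Omega$, we only need to check that $\Omega(\partial_t\iota,\partial_\theta\iota)=0$ at every point $(t,\theta)$.

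\textbf{Vanishing at $t=0$.} At $t=0$, the vector $\partial_\theta\iota$ is tangent to the fiber $T^*_{x_0}\Sigma$, so it is vertical. The vector $\partial_t\iota = X_p$ is the Hamiltonian vector field of $p$ for $\Omega$. Hence
\[
\Omega(X_p,\partial_\theta\iota) = \pm\, dp(\partial_\theta\iota),
\]
with the sign fixed by the convention $\iota_{X_p}\Omega = \pm dp$. This vanishes because $p = \tfrac12|\xi|^2_g$ is invariant under the fiberwise rotation $R_\theta$, so $p$ is constant along $\theta \mapsto (x_0,R_\theta\xi_0)$.

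\textbf{Propagation to all $t$.} We have $\Omega$ closed: $\omega_0$ is exact and $F_\nabla$ is a closed $2$-form, so $\pi^*F_\nabla$ is closed. Hence Cartan's formula gives $\mathcal{L}_{X_p}\Omega = 0$, so $\Phi_t^*\Omega = \Omega$. Since $\partial_\theta\iota(t,\theta) = d\Phi_t(\partial_\theta\iota(0,\theta))$ and $X_p(\Phi_t z) = d\Phi_t X_p(z)$, we get
\[
\Omega(\partial_t\iota,\partial_\theta\iota)(t,\theta) = \Omega(\partial_t\iota,\partial_\theta\iota)(0,\theta) = 0.
\]
The same conclusion also follows directly: for any $t$, $\Omega(X_p,\cdot) = \pm dp$, and $\partial_\theta\iota$ is tangent to the level set $\{p=E\}$ because $p\circ\iota \equiv E$. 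So the invariance step is not even needed. I would present this direct version, since it avoids any periodicity issues.

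\textbf{Immersion.} This is the only non-formal step, and I expect it to be the main obstacle. At $t=0$, $\partial_\theta\iota$ is a nonzero vertical vector, since $E>0$ so $\xi_0 \neq 0$. The vector $X_p$ has horizontal component $d\pi(X_p) = \xi^\sharp \neq 0$. So the two are linearly independent. For general $t$, both vectors are images under the isomorphism $d\Phi_t$ of the independent pair at $t=0$, so they stay independent. The map $\iota$ is therefore an immersion of the torus. Here one uses that $(t,\theta)\mapsto\iota$ is well defined on the quotient by $T_E$-periodicity of the flow on $\{p=E\}$, which is item (i) of the dynamical trichotomy. I would also note that $\Omega$ is nondegenerate, being the twisted symplectic form, so that ``Lagrangian'' makes sense. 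This completes the proof.
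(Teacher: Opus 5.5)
Your proof is correct, and the ``direct version'' you say you would present is exactly the paper's argument: $\Omega(H_p^\Omega,Y)=\pm dp(Y)=0$ for any $Y$ tangent to $\T^2(x_0,E)\subset\{p=E\}$ and transverse to $H_p^\Omega$. Your extra check that $(t,\theta)\mapsto\Phi_t(x_0,R_\theta\xi_0)$ is an immersion is also correct; the paper takes this for granted. At $t=0$ the vertical vector $\partial_\theta\iota$ and the vector $H_p^\Omega$, with $d\pi(H_p^\Omega)=\xi^\sharp\neq 0$, are independent, and $d\Phi_t$ carries this independence to all $t$.
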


\begin{proof}
At $z \in \T^2(x_0,E)$, the Hamiltonian vector field $H_p^\Omega(z)$ is tangent to $\T^2$ by construction; we can also find another vector $Y(z)$, tangent to $\T^2(x_0,E)$ (hence to $\{p=E\}$) and transverse to $H_p^\Omega(z)$. We thus find $\Omega(Y,H_p^\Omega) = dp(Y) = 0$ as $Y$ is tangent to a level set of $p$.
\end{proof}

Let $\pi : T^*\Sigma \to \Sigma$ be the footpoint projection. We introduce $R_E \geq 0$ such that:
\[
\cosh R_E = \dfrac{B^2+2E}{B^2-2E}.
\]
Note that $R_0=0$ and $R_E \to_{E \to E_c} +\infty$. In the following lemma, $\D(x,R) \subset \Sigma$ is the geodesic disk of radius $R$, centered at $x \in \Sigma$.

\begin{lemma}
\label{lemma:footprojection}
Let $\pi : \T^2(x_0,E) \to \Sigma$ be the footpoint projection. Then $\pi(\T^2(x_0,E)) = \D(x_0,R_E)$.
\end{lemma}

\begin{proof}
Let $\lambda := \sqrt{2E}$. Modulo conjugation by a fiberwise rescaling, the magnetic flow $(\Phi_t)_{t \in \R}$ on the energy shell
\[
S_\lambda^*\Sigma := \{|\xi|=\lambda\}=\{p=E\}
\]
is conjugate to the flow generated by ${\color{black}\lambda}X-{\color{black}B}V$ on $S^*\Sigma := S_1^*\Sigma$, where $X$ is the geodesic vector field, and $V$ is the generator of the $2\pi$-periodic rotation in $T^*\Sigma$, see \cite[Proposition 2.5]{Charles-Lefeuvre-25}. As $\Sigma$ is hyperbolic, we may write $S^*\Sigma \simeq S\Sigma = \Gamma\backslash\mathrm{PSL}(2,\R)$ for some Fuchsian group $\Gamma < \mathrm{PSL}(2,\R)$. The vector field ${\color{black}\lambda}X-{\color{black}B}V$ can then be identified with the element of $\mathfrak{sl}(2,\R)$ given by the matrix
\[
F_\lambda := {\color{black}\lambda}X-{\color{black}B}V = \begin{pmatrix} \lambda/2 & -B/2 \\ B/2 & -\lambda/2 \end{pmatrix},
\]
see the proof of \cite[Proposition 2.5]{Charles-Lefeuvre-25}.

We now argue on the universal cover of $\Sigma$, identified with the Poincaré upper half-plane $\HH^2$, and lift $x_0$ to $i \in \HH$. Consider the (unit) tangent vector $v_0 \in T_i\HH^2$ given in complex coordinates by $v_0=i$. Recall that there is an identification $\mathrm{PSL}(2,\R) \simeq S\HH^2$ given by $\gamma \mapsto (\gamma.i, d\gamma_i v_0)$, where the $\mathrm{PSL}(2,\R)$ action has the following explicit expression:
\[
\gamma.(z,v) = (\gamma.z, d\gamma_z v) = \left(\dfrac{az+b}{cz+d}, \dfrac{v}{(cz+d)^2}\right),
\]
see \cite[Chapter 1]{Katok-92}.

We thus find that the trajectory of $(i,v_0)$ under the flow generated by $F_\lambda$ is
\begin{equation}
\label{equation:trajectory}
e^{t F_\lambda}(i,i) =: (z(t),v(t)).
\end{equation}
The eigenvalues of $F_\lambda$ are given by $\pm \tfrac{i}{2}\gamma$, $\gamma := (B^2-\lambda^2)^{1/2}$. As a consequence, $e^{2\pi/\gamma \cdot F_\lambda} = -\mathbf{1} = \mathbf{1}$ in $\mathrm{PSL}(2,\R)$, that is the trajectory of $(i,v_0)$ has period $2\pi/\gamma = T_E$. By homogeneity of $\HH^2$, the function
\begin{equation}
    \label{equation:phit-h}
\phi(t) := d_{\HH^2}(i,z(t))
\end{equation}
is seen to be maximized at time $t = \pi/\gamma=T_E/2$. A quick computation shows that
\[
e^{\pi/\gamma \cdot F_\lambda} = \gamma^{-1} \begin{pmatrix} \lambda & - B \\ B & -\lambda \end{pmatrix}.
\]
As a consequence, 
\[
z(\pi/\gamma) = \dfrac{\lambda i-B}{Bi - \lambda} = \dfrac{2\lambda B+i(B^2-\lambda^2)}{\lambda^2+B^2}.
\]

The hyperbolic distance $d_{\HH^2}(i,z(\pi/\gamma))$ between $i$ and $z(\pi/\gamma)$ satisfies (see \cite[Theorem 1.2.6]{Katok-92}):
\[
\cosh d_{\HH^2}(i,z(\pi/\gamma)) = 1 + \dfrac{|z-i|^2}{2 \Im(z) \Im(i)} = \dfrac{B^2+\lambda^2}{B^2-\lambda^2}.
\]
Rotating the vector $v_0 \in T_i \HH^2$, that is considering the trajectories of $(i,R_\theta v_0)$, where $R_\theta$ denotes the rotation by angle $\theta \in [0,2\pi]$, one sees that the union of the trajectories (under the flow generated by $F_\lambda$) projects onto the geodesic disk of radius $\cosh^{-1}\left(\tfrac{B^2+\lambda^2}{B^2-\lambda^2}\right)$, centered at $i \in \HH^2$. The conclusion is then immediate as we consider the energy shell $\{\tfrac{1}{2}|\xi|^2=E\}$, that is $\lambda^2 = 2E$.  
\end{proof}

For later use, we also record a useful computation on the derivatives of the function $\phi(t) = d_{\HH^2}(i,z(t))$ introduced in \eqref{equation:phit-h} at $t=\pi/\gamma$, where $z(t)$ is defined in \eqref{equation:trajectory}.

\begin{lemma}
\label{lemma:calculs}
We have:
\[
\phi'(\pi/\gamma) = 0, \qquad \phi''(\pi/\gamma) = \dfrac{\sqrt{2E}}{2B}\left(2E-B^2\right) < 0.
\]
\end{lemma}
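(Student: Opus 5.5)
The plan is to compute $\phi$ in closed form from the explicit matrix $e^{tF_\lambda}\in\mathrm{PSL}(2,\R)$ used in the proof of Lemma \ref{lemma:footprojection}, and then differentiate. As there, write $\lambda=\sqrt{2E}$ and $\gamma=(B^2-\lambda^2)^{1/2}$.

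\textbf{Step 1: the matrix exponential.} Since $F_\lambda$ is traceless with $F_\lambda^2=-\tfrac{\gamma^2}{4}\mathbf{1}$, we have
\[
e^{tF_\lambda}=\cos(\gamma t/2)\,\mathbf{1}+\tfrac{2}{\gamma}\sin(\gamma t/2)\,F_\lambda .
\]
With $c:=\cos(\gamma t/2)$ and $s:=\tfrac{2}{\gamma}\sin(\gamma t/2)$, its entries are
\[
a=c+\tfrac{s\lambda}{2},\qquad b=-\tfrac{sB}{2},\qquad c'=\tfrac{sB}{2},\qquad d=c-\tfrac{s\lambda}{2}.
\]

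\textbf{Step 2: a closed formula for $\cosh\phi$.} For $g\in\mathrm{PSL}(2,\R)$ we have the standard identity $\cosh d_{\HH^2}(i,g.i)=\tfrac12(a^2+b^2+c'^2+d^2)$. It follows from the distance formula $\cosh d = 1 + |z-i|^2/(2\Im z)$ already quoted from \cite{Katok-92}, using $z(t)=e^{tF_\lambda}.i$. Summing the squares of the entries above and using $B^2-\gamma^2=\lambda^2$ and $c^2+\tfrac{\gamma^2s^2}{4}=1$, we get
\[
\cosh\phi(t)=c^2+\tfrac{s^2}{4}(\lambda^2+B^2)=1+\tfrac{2\lambda^2}{\gamma^2}\sin^2(\gamma t/2).
\]
As a consistency check, at $t=\pi/\gamma$ this gives $(B^2+\lambda^2)/(B^2-\lambda^2)$, which recovers the value found in Lemma \ref{lemma:footprojection}.

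\textbf{Step 3: the first derivative.} The right-hand side is maximal at $t=\pi/\gamma$, where $\phi>0$ since $E>0$. So $\phi$ is smooth near that point and attains a maximum there, which gives $\phi'(\pi/\gamma)=0$.

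\textbf{Step 4: the second derivative.} Differentiating $\cosh\phi$ twice gives
\[
\sinh\phi\,\phi''+\cosh\phi\,(\phi')^2=\lambda^2\cos(\gamma t).
\]
At $t=\pi/\gamma$ this reduces to $\phi''=-\lambda^2/\sinh\phi$. From the value of $\cosh\phi$ at that point,
\[
\sinh\phi=\frac{\sqrt{(B^2+\lambda^2)^2-(B^2-\lambda^2)^2}}{B^2-\lambda^2}=\frac{2\lambda B}{B^2-\lambda^2}.
\]
Hence
\[
\phi''(\pi/\gamma)=-\frac{\lambda(B^2-\lambda^2)}{2B}=\frac{\sqrt{2E}}{2B}\,(2E-B^2),
\]
which is negative because $E<E_c$.

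The only step that requires care is Step 2, namely computing $\cosh\phi$ cleanly in terms of the matrix entries. Once the closed formula for $\cosh\phi$ is in hand, the rest is routine differentiation.
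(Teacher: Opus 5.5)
Your proof is correct, but it takes a genuinely different route from the paper.

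You stay entirely inside the $\mathrm{PSL}(2,\R)$ model set up in Lemma \ref{lemma:footprojection}. You apply the identity $\cosh d_{\HH^2}(i,g.i)=\tfrac12(a^2+b^2+c'^2+d^2)$ to $e^{tF_\lambda}$ and get the global closed form $\cosh\phi(t)=1+\tfrac{2\lambda^2}{\gamma^2}\sin^2(\gamma t/2)$. Both derivatives then follow by direct differentiation; the matrix entries, $\sinh\phi(\pi/\gamma)=2\lambda B/(B^2-\lambda^2)$ and the final value all check out.

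The paper argues intrinsically instead:
\begin{itemize}
\item $\phi'(\pi/\gamma)=0$ because $\dot z(\pi/\gamma)$ is tangent to the circle $\partial\D(i,R_E)$.
\item It writes $\phi''=\langle\nabla^2 r\,\dot z,\dot z\rangle+\langle\nabla r,\nabla_{\dot z}\dot z\rangle$.
\item It uses that the Hessian of the distance function on vectors tangent to that circle is $\coth(R_E)\,g_{\HH^2}$, together with the magnetic geodesic equation $\nabla_{\dot z}\dot z=-Bj\dot z$.
\item This gives $\phi''(\pi/\gamma)=\coth(R_E)|\dot z|^2-B|\dot z|$.
\end{itemize}

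What each approach buys:
\begin{itemize}
\item The paper's version explains the sign conceptually: the magnetic geodesic bends with curvature $B/|\dot z|$, more sharply than the distance circle, whose curvature is $\coth R_E$. It would also carry over to settings with no explicit group model. However, it leans on external facts and on orientation conventions for $j$, namely that $\langle\nabla r, j\dot z\rangle=|\dot z|$ at the turning point.
\item Your computation is self-contained and avoids those conventions.
\item It also gives the whole profile of $\phi$. In particular, it rigorously justifies the claim in Lemma \ref{lemma:footprojection}, stated there only ``by homogeneity'', that $\phi$ is maximized exactly at $t=\pi/\gamma$. It also yields $\tau(t)=R_E-\phi(t)$ globally, rather than only to second order, for use in Proposition \ref{proposition:pushforward}.
\end{itemize}
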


\begin{proof}
Consider $r : \HH^2 \setminus \{i\} \to (0,\infty)$ defined by $r(z) := d_{\HH^2}(i,z)$. The vector $\nabla r(z)$ is the tangent vector at $z$ to the unique hyperbolic geodesic joining $i$ to $z$. Note that $\phi(t) = r(z(t))$. We thus find:
\[
\phi'(\pi/\gamma) := \langle \nabla r(z(\pi/\gamma)), \dot{z}(\pi/\gamma)\rangle = 0
\]
as $\dot{z}(\pi/\gamma)$ is tangent to $\partial \D^2(i,R_E)$, which is orthogonal to the geodesic joining $i$ to $z(\pi/\gamma)$.

In addition, we have
\[
\phi''(t) = \langle \nabla^2 r(z(t)) \dot{z}(t), \dot{z}(t)\rangle + \langle \nabla r(z(t)), \nabla_{\dot{z}(t)} \dot{z}(t)\rangle,
\]
where $\nabla^2 r$ is the Hessian of $r$. Note that, when restricted to tangent vectors to the circle $\{z \in \HH^2 ~:~ d_{\HH^2}(z,i)=R_E\}$ in $\HH^2$, it coincides with the shape operator of this circle (the second fundamental form). {\color{black}Using polar coordinates around $i$, the hyperbolic metric writes $g_{\HH^2} = dr^2 + \sinh(r)^2 d\theta^2$, from which one deduces that $\nabla^2 r = \coth(R_E)g_{\HH^2}$ when applied to tangent vectors to the circle (see \cite[Lemma 3.1]{Alvarez-Lefeuvre-Lowe-Smith-26} for instance)}. Since $t \mapsto z(t)$ is a magnetic geodesic, it satisfies by definition the equation $\nabla_{\dot{z}(t)} \dot{z}(t) = - B j_{z(t)} \dot{z}(t)$, where $j \in C^\infty(\HH^2, \mathrm{End}(T\HH^2))$ is the almost complex structure on $\HH^2$ (the fiberwise rotation by $\pi/2$), see \cite[Equation (2.5)]{Charles-Lefeuvre-25}. Using the formula $\coth(\cosh^{-1}(x))= x (x^2-1)^{-1/2}$ for $x \geq 1$, we find:
\[
\phi''(\pi/\gamma) = \coth(R_E)|\dot{z}(t)|^2 - B|\dot{z}(t)| = \dfrac{\sqrt{2E}}{2B}\left(2E-B^2\right),
\]
since $|\dot{z}(t)|=\sqrt{2E}$ on the energy shell $\{p=E\}$.
\end{proof}

\begin{lemma}
\label{lemma:nmber-bounded}
There exists an integer $N_E \geq 0$ such that for any $x \in \Sigma, x \neq x_0$, the number of preimages $\pi^{-1}(x) \cap \T^2(x_0,E)$ is finite and bounded by $N_E$.
\end{lemma}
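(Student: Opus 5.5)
We lift to the universal cover. Let $q:\HH^2\to\Sigma$ be the covering with $q(i)=x_0$. The torus $\T^2(x_0,E)$ is the image under $dq$ of the lifted torus
\[
\widetilde{\T}:=\{e^{tF_\lambda}(i,R_\theta v_0)\},
\]
which lies in $\pi^{-1}(\overline{\D}(i,R_E))$ by Lemma \ref{lemma:footprojection}. Preimages of $x\in\Sigma$ in $\T^2(x_0,E)$ are then indexed by pairs $(\tilde x,(t,\theta))$, where $\tilde x\in q^{-1}(x)\cap \overline{\D}(i,R_E)$ and $(t,\theta)$ is a parameter with $z_\theta(t)=\tilde x$. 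Here $z_\theta(t)$ denotes the footpoint of $e^{tF_\lambda}(i,R_\theta v_0)$.

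\textbf{Step 1: bounding the number of lifts.} Since $\Gamma$ acts properly discontinuously and cocompactly, the number of $\gamma\in\Gamma$ with $\gamma\tilde x\in\overline{\D}(i,R_E)$ is bounded uniformly in $\tilde x$. Indeed, if $\D(\tilde x,\iota/2)$, with $\iota$ the injectivity radius, are disjoint translates contained in $\D(i,R_E+\iota)$, then a volume comparison gives at most $\vol(\D(R_E+\iota))/\vol(\D(\iota/2))$ of them. This gives a constant $M_E$.

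\textbf{Step 2: bounding the number of parameters.} It remains to bound, for fixed $\tilde x\neq i$ in $\HH^2$, the number of $(t,\theta)\in\R/T_E\Z\times\R/2\pi\Z$ with $z_\theta(t)=\tilde x$.

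By rotational symmetry about $i$, since $R_\theta$ is realized by the stabilizer of $i$ commuting with the flow up to conjugation, we have $z_\theta(t)=\rho_\theta(z_0(t))$, where $\rho_\theta$ is the hyperbolic rotation about $i$. Hence $z_\theta(t)=\tilde x$ forces $d(i,z_0(t))=d(i,\tilde x)=:r>0$, and for each such $t$ the angle $\theta$ is uniquely determined. So the count equals the number of $t\in(0,T_E)$ with $\phi(t)=r$, where $\phi$ is as in \eqref{equation:phit-h}. The value $t=0$ is excluded since $\phi(0)=0<r$.

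\textbf{Step 3 (main obstacle): the level sets of $\phi$.} We show $\phi(t)=r$ has at most two solutions. The explicit formula gives
\[
\cosh\phi(t)=1+\frac{|z(t)-i|^2}{2\Im z(t)}.
\]
Writing $e^{tF_\lambda}=\cos(\gamma t/2)\mathbf 1+\frac{2}{\gamma}\sin(\gamma t/2)F_\lambda$ (valid since $F_\lambda^2=-\frac{\gamma^2}{4}\mathbf 1$), a direct computation, using $\cosh d(i,g.i)=\tfrac12\|g\|_{HS}^2$, yields
\[
\cosh\phi(t)=1+c\,\sin^2(\gamma t/2),\qquad c=\frac{2\lambda^2}{\gamma^2}.
\]
The right-hand side is strictly increasing on $[0,\pi/\gamma]$ and symmetric about $\pi/\gamma$, consistent with Lemma \ref{lemma:calculs}. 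So there are at most two solutions, one when $r=R_E$.

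Combining the steps gives $N_E=2M_E$. The only delicate point is verifying the trace-type identity for $\cosh\phi$. If it were awkward, one could instead argue by real-analyticity of $\phi$ on the compact circle together with non-constancy, which still gives finitely many solutions for each $r$. Uniformity in $r$ would then follow from the fact that the critical points of $\phi$ on $(0,T_E)$ are nondegenerate. Only $t=\pi/\gamma$ occurs, with $\phi''<0$ by Lemma \ref{lemma:calculs}.
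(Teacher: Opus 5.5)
Your proof is correct and follows the same route as the paper. You lift to $\HH^2$, show that each $\tilde x\neq i$ in $\D(i,R_E)$ has at most two preimages in the lifted torus, and bound the number of lifts of $x$ in the disk uniformly to get $N_E=2M_E$; the only difference is that the paper covers the disk by finitely many fundamental domains where you use a packing bound. Your identity $\cosh\phi(t)=1+\frac{2\lambda^2}{\gamma^2}\sin^2(\gamma t/2)$ checks out via $\cosh d(i,g.i)=\tfrac12\|g\|_{HS}^2$, and together with the rotation equivariance $z_\theta(t)=\rho_\theta(z_0(t))$ it is exactly the ``quick modification'' of the proof of Lemma \ref{lemma:footprojection} that the paper leaves implicit, so the real-analyticity fallback in your last paragraph is not needed.
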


\begin{proof}
We argue on the universal cover $\HH^2$ of $\Sigma$. Consider $\pi : \T^2(i,E) \to \D(i,R_E)$. A quick modification of the arguments in the proof of Lemma \ref{lemma:footprojection} shows that any point $x \in \D(i,R_E)$, $x \neq i$ and $x \notin \partial \D(i,R_E)$ has exactly two preimages. A point $x \in \partial \D(i,R_E)$ has exactly one preimage. Covering $\D(i,R_E)$ by a finite number $N'_E$ of fundamental domains for $\Sigma$, we then see that, on the surface $\Sigma$ itself, any point $x \neq x_0$ has at most $N_E := 2N'_E$ preimages under the projection $\pi : \T^2(x_0,E) \to \Sigma$.
\end{proof}

Let $q:\mathbb H^2\to\Sigma$ be the covering map, with $q(i)=x_0$, and set
\[
\D(x_0,R_E):=q\big(\mathbb D(i,R_E)\big), \qquad \gamma_E:=q\big(\partial\mathbb D(i,R_E)\big).
\]
That is $\D(x_0,R_E)$ is the closed metric disk of radius $R_E$ centered at $x_0$, and $\gamma_E$ is the immersed fold caustic
of the projection $\T^2(x_0,E)\to\Sigma$. Note that $\gamma_E$ is a (piecewise) smooth immersed curve in $\Sigma$. We say that $y_0 \in \gamma_E$ is regular if $\gamma_E$ is smooth near $y_0$ and singular otherwise. Singular points on $\Sigma$ correspond to those having multiple preimages in the universal cover $\HH^2$ lying on $\partial \D(i,R_E)$.

Note that, locally, near singular points, we may write $\gamma_E = \cup_{i=1}^{J(y_0)} \gamma_i$, where the $\gamma_i$'s are smooth embedded curves. They correspond to local projections of pieces of $\partial \D(i,R_E) \subset \HH^2$. The \emph{order} $J(y_0) \geq 1$ of the singularity is equal to the number of preimages of $y_0$ on $\partial \D(i,R_E) \subset \HH^2$.

\begin{figure}[h!]
\begin{center}
\includegraphics[scale=1]{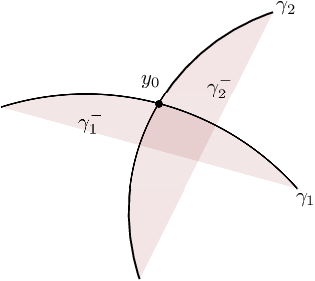}
\caption{A crossing of $\gamma_E$ with a singularity of order $2$.}
\end{center}
\label{figure:crossing}
\end{figure}

Let $\mathbb{A}^- \subset \HH^2$ (resp. $\mathbb{A}^+ \subset \HH^2$) be the annulus consisting of the set of points $z \in \HH^2$ such that $R_E-\eps < d_{\HH^2}(i,z) < R_E$ for some $\eps > 0$ small enough (resp. $R_E < d_{\HH^2}(i,z) < R_E+\eps$). The projections of $\mathbb{A}^\pm$ to the surface $\Sigma$ define two half neighborhoods of $\gamma_E$, which we denote by $\gamma_E^\pm$. Note that they may \emph{not} be disjoint. Near a singular point $y_0 \in \gamma_E$, we may actually decompose $\gamma_E^\pm = \cup_{i=1}^{J(y_0)} \gamma_i^\pm$.

Let $\mathrm{Leb}_{\T^2}$ be the Lebesgue measure $\dd t \otimes \dd \theta$ on the immersed $2$-torus $\T^2(x_0,E)$. We compute in the next proposition its projection to the surface via $\pi : T^*\Sigma \to \Sigma$.

\begin{proposition}[Pushforward of the Lebesgue measure]
\label{proposition:pushforward}
The pushforward $\nu := \pi_* \mathrm{Leb}_{\T^2}$ is equal to $\nu = \alpha \vol_{\Sigma}$, where $\alpha \in L^1(\Sigma,\vol)$ is explicit (see \eqref{equation:alpha}), with support equal to $\D(x_0,R_E)$, upper semicontinuous on $\Sigma \setminus \left(\{x_0\} \cup \gamma_E\right)$. In addition:
\begin{itemize}
    \item Near $x_0$:
    \[
    \alpha(x) \sim_{x \to x_0} \dfrac{\sqrt{2/E}}{d(x_0,x)}.
    \]
    \item Near $y_0 \in \gamma_E$:
    \[
    \alpha(x) = \mc{O}(1), \qquad x \to y_0, x \in \textcolor{black}{\gamma^+_E, x \notin \gamma^-_E}, 
    \]
    and
    \begin{equation}
        \label{equation:adding}
    \alpha(x) \sim \dfrac{1}{E} \left(\dfrac{\sqrt{2E}(B^2-2E)}{4B}\right)^{1/2}\sum_{j=1}^{J(y_0)} \dfrac{\mathbf{1}_{\gamma_j^-}(x)}{d_{\Sigma}(x,\gamma_j)^{1/2}}, \qquad x \to y_0, x \in \gamma_E^-,
    \end{equation}
   where $\mathbf{1}_{Y}$ denotes the indicator function of a set $Y \subset \Sigma$.
\end{itemize} 
\end{proposition}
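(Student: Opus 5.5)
The plan is to work on the universal cover $\HH^2$ and compute the pushforward of $\dd t \otimes \dd\theta$ under the map $(t,\theta) \mapsto z_\theta(t)$, where $z_\theta(t)$ is the footpoint of $e^{tF_\lambda}$ applied to $(i,R_\theta v_0)$, $\lambda=\sqrt{2E}$. Then push down by the covering map $q$; the density on $\Sigma$ is the sum over the preimages in $\D(i,R_E)$, which are finitely many by Lemma \ref{lemma:nmber-bounded}. By rotational symmetry about $i$, $z_\theta(t) = \rho_\theta(z_0(t))$ with $\rho_\theta$ the hyperbolic rotation of angle $\theta$ about $i$. So in geodesic polar coordinates $(r,\varphi)$ centered at $i$ we have $r = \phi(t)$, with $\phi$ as in \eqref{equation:phit-h}, and $\varphi = \theta + \psi(t)$ for some smooth angular function $\psi$. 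The Jacobian of $(t,\theta)\mapsto(r,\varphi)$ is $|\phi'(t)|$, and $\vol = \sinh r\,\dd r\,\dd\varphi$. Summing over the two preimages $t_\pm(r)$ of a given $r\in(0,R_E)$ (one on $(0,T_E/2)$, one on $(T_E/2,T_E)$), which are symmetric so that $|\phi'|$ agrees, I get the explicit radial density on $\HH^2$:
\[
\tilde\alpha(r) = \frac{2}{\sinh(r)\,|\phi'(t_+(r))|}\mathbf{1}_{r<R_E}.
\]
Then $\alpha(x) = \sum_{\tilde x \in q^{-1}(x)} \tilde\alpha(d(i,\tilde x))$, and this is the explicit formula \eqref{equation:alpha}. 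Integrability holds because the total mass is $2\pi T_E$. The support statement follows from Lemma \ref{lemma:footprojection}. Upper semicontinuity away from $\{x_0\}\cup\gamma_E$ follows because $\tilde\alpha$ is continuous on $\{0<r<R_E\}$ and jumps down to $0$ across $r=R_E$, and a locally finite sum of such functions is upper semicontinuous.

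For the asymptotics near $x_0$: as $t\to 0$, $\phi(t)\sim|\dot z(0)|t = \sqrt{2E}\,t$, so $\phi'\to\sqrt{2E}$ and $\sinh r\sim r$. This gives $\tilde\alpha(r)\sim 2/(\sqrt{2E}\,r) = \sqrt{2/E}/r$. The preimage $t_-$ near $T_E$ has the same behaviour, and I must check that it is already accounted for in the factor $2$: near $r=0$ the two branches are $t\approx 0$ and $t\approx T_E$, not the symmetric pair. Other lifts of $x$ stay at distance bounded below from $i$, so they contribute $\mc{O}(1)$. This yields the stated equivalent.

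Near the caustic I would use Lemma \ref{lemma:calculs}. Taylor expansion at $t_0=\pi/\gamma$ gives
\[
R_E-\phi(t)\approx \tfrac12|\phi''(t_0)|(t-t_0)^2,
\]
so $|\phi'(t_\pm(r))|\approx \sqrt{2|\phi''(t_0)|(R_E-r)}$. Hence
\[
\tilde\alpha(r)\approx \frac{2}{\sinh R_E\sqrt{2|\phi''|}}(R_E-r)^{-1/2}.
\]
Plugging in $|\phi''| = \sqrt{2E}(B^2-2E)/(2B)$ and $\sinh R_E = 2\sqrt{2E}B/(B^2-2E)$ (from $\cosh R_E$) should simplify to the constant $\frac1E\big(\frac{\sqrt{2E}(B^2-2E)}{4B}\big)^{1/2}$. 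I would verify this algebra carefully, since the constant is the main bookkeeping risk.

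On $\Sigma$, near $y_0\in\gamma_E$, each preimage of $y_0$ on $\partial\D(i,R_E)$ yields a local sheet $\gamma_j$, and $R_E - r$ equals $d_\Sigma(x,\gamma_j)$ locally on the inner side $\gamma_j^-$. Summing over $j$ gives \eqref{equation:adding}. All other lifts lie in the interior or exterior at positive distance, and only contribute $\mc{O}(1)$ or $0$. For $x\in\gamma_E^+\setminus\gamma_E^-$ every singular sheet contributes $0$, giving $\alpha=\mc{O}(1)$.

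The main obstacle is this covering bookkeeping near singular points of $\gamma_E$. I need to identify the lifts close to $\partial\D(i,R_E)$ with the branches $\gamma_j$, and to check that distances in $\Sigma$ to $\gamma_j$ agree with $R_E-r$ up to $o$-corrections. This follows because $q$ is a local isometry and $\partial \D(i,R_E)$ is a smooth circle, so the hyperbolic distance to it equals $|R_E - r|$ exactly for nearby points.
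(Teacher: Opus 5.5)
Your proposal is correct, and it reaches the paper's density by a different computation of the Jacobian.

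\emph{The paper's route.} It stays on the energy shell. It uses the frame $(X,X_\perp,V)$, the identity $H_p^\Omega = X-BV$, and the variation field $\partial_\theta\Phi_t = a(t)X+b(t)X_\perp+c(t)V$ imported from \cite{Charles-Lefeuvre-25}, where $b(t)=\sin(\gamma t)/\gamma$ and $\gamma=(B^2-2E)^{1/2}$. This gives $|\det d\Psi| = 2E|b(t)|$, and hence \eqref{equation:alpha} directly in the time variable.

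\emph{Your route.} You use that orientation-preserving isometries of $\HH^2$ commute with the magnetic flow. So the pushed-forward density is radial about $i$, and the Jacobian is $\sinh(\phi)\,|\phi'|$ in geodesic polar coordinates.

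\emph{Why they agree.} The link is the identity $\sinh\phi(t)\,\phi'(t) = \tfrac{2E}{\gamma}\sin(\gamma t) = 2E\,b(t)$. It follows by differentiating the closed form $\cosh\phi(t) = 1+\tfrac{2E}{\gamma^2}(1-\cos \gamma t)$, which you can read off from $e^{tF_\lambda} = \cos(\gamma t/2)\mathbf{1}+\tfrac{2}{\gamma}\sin(\gamma t/2)F_\lambda$. This closed form also supplies two facts you use without proof: $\phi$ is strictly monotone on $(0,T_E/2)$, and $\phi(T_E-t)=\phi(t)$.

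\emph{What each buys.} Your route is self-contained in the hyperbolic-plane picture, with no Jacobi-field computation. It makes the two-preimage count transparent, and it writes everything in terms of $R_E-r$, which is exactly the distance to the local sheet of the caustic. The paper's route gives the density explicitly in $t$, exhibits the rank drop of $d\Psi$ at $t\in\{0,T_E/2\}$, and gets both asymptotics by linearizing $\sin(\gamma t)$.

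\emph{The caustic constant.} The constant you were unsure about does come out right. Differentiating once more gives $\sinh R_E\,|\phi''(T_E/2)| = 2E$. Hence $\frac{2}{\sinh R_E\sqrt{2|\phi''(T_E/2)|}} = \frac{1}{E}\big(\tfrac{|\phi''(T_E/2)|}{2}\big)^{1/2}$, which is the constant in \eqref{equation:adding} by Lemma~\ref{lemma:calculs}.

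Three minor points remain.
\begin{enumerate}
\item Near $x_0$, your claim that the remaining lifts of $x$ contribute $\mc{O}(1)$ needs them to stay away from $\partial\D(i,R_E)$ as well as from $i$. This holds exactly when $x_0\notin\gamma_E$. If some $g.i$ with $g\in\Gamma\setminus\{1\}$ lies on $\partial\D(i,R_E)$, those lifts contribute $\asymp (R_E-r)^{-1/2}$ from the inner side. That is negligible against $d(x,x_0)^{-1}$ only for non-tangential approach, which is the ambiguity the paper itself flags after the statement.
\item The factor $2$ near $x_0$ is right as written. The two preimages are always $t_+(r)$ and $T_E-t_+(r)$, and near $r=0$ these are precisely $t\approx 0$ and $t\approx T_E$, so nothing is double counted.
\item Off $\{x_0\}\cup\gamma_E$, your argument actually gives continuity of $\alpha$, since the number of lifts in the open disk is locally constant there. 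This is stronger than the stated upper semicontinuity.
\end{enumerate}
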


{\color{black}Note that the above statement is ambiguous if $x_0 \in \gamma_E$. In this case, only the estimate $\alpha(x) \sim_{x \to x_0} \sqrt{2/E}d(x_0,x)^{-1}$ remains valid near $x_0$ if $x \to x_0$ radially.}

\begin{figure}[h!]
\begin{center}
\includegraphics[scale=0.8]{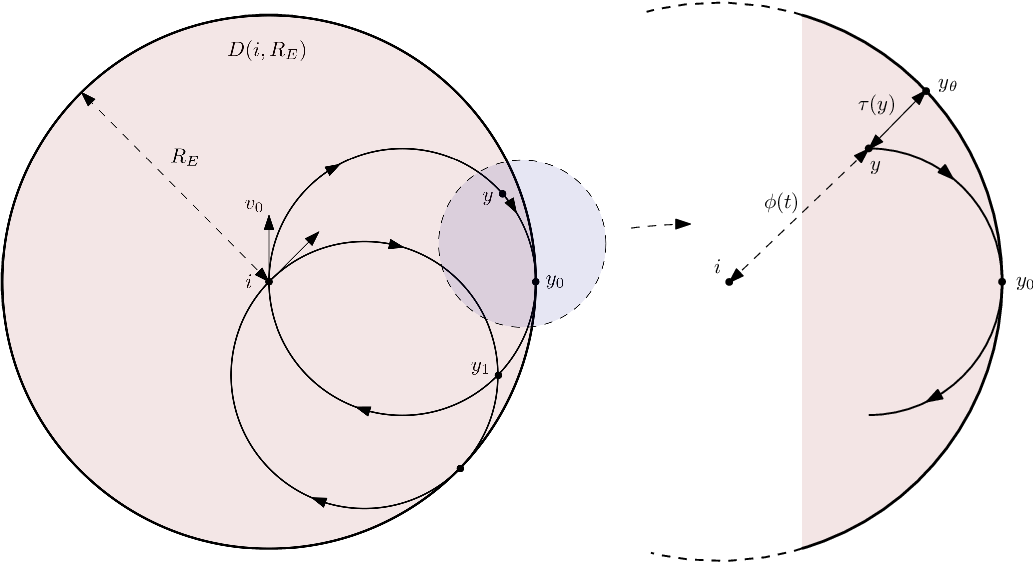}
\caption{Left-hand side: two $T_E$-periodic magnetic trajectories inside the geodesic disk $\D(i,R_E)$ are depicted in the Poincaré hyperbolic disk model. The point $y_1$ in the interior of the disk has two preimages under the map $\Psi$ but the point $y_0 \in \partial \D(i,R_E)$ has only one. Right-hand side: zooming in near the boundary $\partial \D(i,R_E)$. The various parameters introduced in the proof of Proposition \ref{proposition:pushforward} are depicted.}
\label{figure:2}
\end{center}
\end{figure}

\begin{proof}
On the energy shell $\{p=E\}$, we let $X_\perp := [X,V]$, \textcolor{black}{where we recall that $X$ is the generator of the geodesic flow, and $V$ is the generator of the fiberwise rotation}. Then $(X,X_\perp,V)$ is an oriented frame of $\{p=E\}$, see \cite[Lemma 2.4]{Charles-Lefeuvre-25}. In addition, one has $H_p^\Omega = X -BV$, see \cite[Lemma 2.6]{Charles-Lefeuvre-25}. \textcolor{black}{By definition of the geodesic vector field,} $d\pi(X(x,\xi)) = \xi^\sharp$, where $\sharp : T^*\Sigma \to \Sigma$ is the musical isomorphism; similarly, it can be verified that $d\pi(X_\perp(x,\xi)) = R_{\pi/2} \xi^\sharp$.

We now use the parametrization of $\T^2(x_0,E)$ by $\theta \in \R/2\pi\Z$ and $t \in [0,T_E]$, that is $(\theta,t) \mapsto \Phi_t(x_0,R_\theta \xi_0)$ where $\xi_0$ is an arbitrary point on $C(x_0,E)$. Note that the footpoint projection is degenerate at $t=0$ as the whole circle $C(x_0,E) \subset \T^2(x_0,E)$ projects onto $x_0$.

First, we claim that
\begin{equation}
    \label{equation:local-diffeo}
\Psi : [0,2\pi] \times ((0,T_E) \setminus \{T_E/2\}) \ni (\theta,t) \mapsto \pi(\Phi_t(x_0,R_\theta \xi_0)) \in \Sigma
\end{equation}
is a local diffeomorphism, namely that its differential is invertible. Let $(y,\eta) := \Phi_t(x_0,R_\theta \xi_0) \in \T^2(x_0,E)$, $y = \Psi(\theta,t)$. Note that $\partial_t \Psi(\theta,t) = d\pi(H_p^\Omega(y,\eta)) = \eta^\sharp \in T_y\Sigma$. In addition, by \cite[Lemma 2.7]{Charles-Lefeuvre-25} (see the computations \cite[Equations (2.10)-(2.18)]{Charles-Lefeuvre-25} with initial conditions $a(0)=b(0)=d(0)=0, c(0)=1$), we have:
\[
\partial_\theta (\Phi_t(x_0,R_\theta \xi_0)) = a(t)X(y,\eta) + b(t)X_\perp(y,\eta) + c(t)V(y,\eta) \in T_{(y,\eta)}\T^2(x_0,E),
\]
where
\begin{equation}
    \label{equation:b}
b(t) = \dfrac{\sin((B^2-2E)^{1/2}t)}{(B^2-2E)^{1/2}}, \quad a(t) = -B\int_0^t b(s)\dd s, \quad c(t) = 1 + 2E\int_0^t b(s)\dd s.
\end{equation}
Hence
\[
\partial_\theta \Psi(\theta,t) = a(t) \eta^\sharp + b(t) R_{\pi/2} \eta^\sharp,
\]
and thus $b(t) \neq 0$ for $t \in (0,T_E) \setminus \{T_E/2\}$, where $T_E = 2\pi(B^2-2E)^{-1/2}$. Note that $t=0$ or $t=T_E$ corresponds to $y=x_0$ while $t=T_E/2$ corresponds to $y \in \gamma_E$. At $t=T_E/2$, we find that $b(T_E/2)=0$ and
\[
a(T_E/2) = -B \int_0^{T_E/2} b(s)\dd s=-\dfrac{2B}{B^2-2E} \neq 0.
\]
The differential of $\Psi$ is thus degenerate at $t=T_E/2$ and has rank $1$.

Consider now $y \neq x_0$ and $y \notin \gamma_E$. As $|\eta^\sharp| = \sqrt{2E}$, the Riemannian volume form at $y$ is given by $\vol_{\Sigma}(y) = (2E)^{-1} \eta^\sharp \wedge R_{\pi/2} \eta^\sharp$. We thus find that $|\det d\Psi(t,\theta)| = 2E |b(t)|$. Using the expression \eqref{equation:b} yields:
\begin{equation}
    \label{equation:alpha}
\pi_* \mathrm{Leb}_{\T^2} (y) = \dfrac{1}{2E} \sum_{\Psi(\theta_i, t_i)=y} \dfrac{(B^2-2E)^{1/2}}{|\sin((B^2-2E)^{1/2}t_i)|} \vol_{\Sigma}(y) = \alpha(y) \vol_{\Sigma}(y),
\end{equation}
where the sum runs over the $N(y)$-preimages of $y$ under $\Psi$. This number is bounded by $N(y) \leq N_E$ (Lemma \ref{lemma:nmber-bounded}), and $y \mapsto N(y)$ is upper semi-continuous. This proves the claim on $\alpha$ when $y \notin \{x_0\} \cup \gamma_E$.

We now study $\alpha$ for $y$ close to $\gamma_E$. We argue on the universal cover as above. Let $y_0 = \Psi(\theta_0,T_E/2) \in \partial \D(i,R_E)$. We may parametrize the (half-)neighborhood of $y_0$ contained in $\D(i,R_E)$ by first considering $y_\theta := \Psi(\theta,T_E/2) \in \partial \D(i,R_E)$ (with $y_{\theta_0} = y_0$), and then the point at distance $\tau \geq 0$ from $y_\theta$ on the (unique) geodesic of length $R_E$ joining $i$ to $y_\theta$. Notice that for $y$ close to $y_0$, $\tau(y) = d_{\HH^2}(y,\gamma_E)$ corresponds the distance between $y$ and $\gamma_E$. We also write $t(y)$ for the time such that $y = \Psi(\theta,t(y))$.

We let $\tau(\theta,t) = \tau(t)$ be the corresponding parameter for the point $y=\Psi(\theta,t)$. By construction, $\tau(t) = R_E-\phi(t)$ so, using Lemma \ref{lemma:calculs}, we find that $\tau(T_E/2)=0, \tau'(T_E/2)=0$ and $\tau''(T_E/2) = c := \tfrac{\sqrt{2E}}{2B}\left(B^2-2E\right) > 0$. Hence, near $t=T_E/2$, we may write $\tau(t) = \tfrac{1}{2} c (t-T_E/2)^2 + \mc{O}((t-T_E/2)^3)$ , and thus $|t(\tau)- T_E/2| = \sqrt{2 \tau/c} + \mc{O}(\tau)$. Using \eqref{equation:alpha} combined with the fact that any point $y$ near $\gamma_E$ has two preimages, and the relation $\tau(y) = d_{\HH^2}(y,\gamma_E)$, we obtain for $y$ close to $y_0$ and $y \in \D(i,R_E)$:
\[
\begin{split}
\alpha(y) = \dfrac{1}{E} \dfrac{(B^2-2E)^{1/2}}{|\sin((B^2-2E)^{1/2}t(y))|} & \sim_{y \to y_0} \dfrac{1}{E|T_E/2-t(y)|} \\
& \sim_{y \to y_0} \dfrac{1}{E}\left(\dfrac{c}{2 d_{\HH^2}(y,\gamma_E)}\right)^{1/2} \\
& = \dfrac{1}{E} \left(\dfrac{\sqrt{2E}(B^2-2E)}{4Bd_{\HH^2}(y,\gamma_E)}\right)^{1/2}.
\end{split}
\]
Note that $\alpha(y) = 0$ if $y \notin \D(i,R_E)$. Going back to the surface $\Sigma$, given $y_0 \in \gamma_E$, the claim \eqref{equation:adding} follows by adding up all the contributions coming from the preimages if $y_0$ is singular.

Finally, for $y$ close to $x_0$, we use again \eqref{equation:alpha}. There is a first preimage obtained for $t(y) \sim_{y \to x_0} d(y,x_0)/\sqrt{2E}$ (note that the geodesic speed is $\sqrt{2E}$ on the energy shell $\{p=E\}$), and another one at $T_E-t(y)$. The other preimages are obtained for times bounded away from $0$ or $T_E$. In addition, if $x_0\in \gamma_E$, the possible singularities coming from times $t(y)$ close to $T_E/2$ are of lower order and therefore negligible. We thus find that:
\[
\alpha(y) \sim_{y \to x_0} \dfrac{\sqrt{2/E}}{d(x_0,y)}.
\]
This completes the proof.
\end{proof}


\section{Bounding the $L^\infty$ norm by spatially localized $L^2$ norms}

\label{section:localized-norm}

Let $(M,g)$ be a compact Riemannian manifold of dimension $n$. Let $P_h \in \textcolor{black}{\Psi^m_h(M)}$ be a formally self-adjoint semiclassical pseudodifferential operator of order $m$ with principal symbol $p \in S^m(T^*M)$. Let $E \in \R$ and assume that $p^{-1}(E-\delta_E,E+\delta_E)$ is compact for some $\delta_E > 0$, and $\partial_\xi p \neq 0$ on $p^{-1}(E-\delta_E,E+\delta_E)$. We are interested in the $L^\infty$ norm of eigenfunctions
\[
P_h u_h = (E+ o(1)) u_h, \qquad \|u_h\|_{L^2(M)} = 1,
\]
as $h \to 0$.

\begin{proposition}
\label{proposition:bound}
Let $0 \leq \eps < 1/2$ and $N > 0$. Under the above assumptions, there exists a constant $C := C(E,N,\eps) > 0$ such that for all $\delta \geq h^{\eps}$, for all $x \in M$:
\[
|u_h(x)|^2 \leq C\left(\left(\delta^{-1}  h^{-(n-1)}  + \delta h^{-(n-1)-2\eps}\right) \int_{B(x,C\delta)} |u_h(y)|^2 \dd y + h^N\|u\|^2_{L^2}\right) 
\]
\end{proposition}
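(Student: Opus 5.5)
Fix $x\in M$ and work in a coordinate chart centred at $x$. The strategy is to write $u_h(x)$ as the value at $x$ of a spectrally localized propagator applied to a spatially localized copy of $u_h$, and to use the propagation speed to see that only the part of $u_h$ within distance $C\delta$ of $x$ contributes. First, since $\WF_h(u_h)\subset p^{-1}(E)$, I microlocalize: $u_h=\Op_h(\psi)u_h+\mathcal O(h^\infty)\|u_h\|_{L^2}$, where $\psi$ is supported in $p^{-1}(E-\delta_E/2,E+\delta_E/2)$. I then take $\rho\in\mathcal S(\R)$ with $\rho(0)=1$ and $\hat\rho$ supported in $[-1,1]$, and set $T:=\delta/(2C_0)$, where $C_0$ bounds $|\partial_\xi p|$ on the compact shell. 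I use the reproducing identity
\[
u_h=\rho\bigl(T(P_h-E_h)/h\bigr)u_h+\bigl(1-\rho(T(P_h-E_h)/h)\bigr)u_h ,
\]
where $E_h=E+o(1)$ is the eigenvalue. The second term vanishes exactly, since $u_h$ is an eigenfunction and $\rho(0)=1$. Writing
\[
\rho\bigl(T(P_h-E_h)/h\bigr)=\frac{1}{2\pi T}\int\hat\rho(t/T)\,e^{-it(P_h-E_h)/h}\,dt ,
\]
the operator only involves propagation for times $|t|\le T$.

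Next I localize spatially. Let $\chi_\delta$ be a cutoff equal to $1$ on $B(x,C\delta)$ and supported in $B(x,2C\delta)$, with $|\partial^\alpha\chi_\delta|\lesssim\delta^{-|\alpha|}$; these are admissible symbols because $\delta\ge h^\eps$ and $\eps<1/2$. By finite propagation speed in the semiclassical sense, $\rho(T(P_h-E_h)/h)\Op_h(\psi)(1-\chi_\delta)$ has a kernel which is $\mathcal O(h^\infty)$ at points near $x$, up to a symbolic remainder. Concretely, one applies Egorov's theorem for times $|t|\le T\le\delta/C_0$ to $\Op_h(\psi)(1-\chi_\delta)$: the flow moves points by at most $C_0|t|\le\delta/2$. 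Since the symbol of $\chi_\delta$ lies in $S_{\eps}$ with $\eps<1/2$, the remainders are $\mathcal O(h^{1-2\eps}\delta^{-2})$ per unit time. These remainders must either be iterated to reach $h^N$, or absorbed into the second error term. This is where the term $\delta h^{-(n-1)-2\eps}$ in the statement should come from: I expect the commutator error $[P_h,\chi_\delta]$, of size $h\delta^{-1}$ relative to $\chi_\delta$, to be controlled after Duhamel by $h^{1-2\eps}$-type losses. This is the main technical obstacle, and I would handle it by a Duhamel expansion of $e^{-itP_h/h}\chi_\delta-\chi_\delta(t)e^{-itP_h/h}$, truncated after finitely many steps depending on $N$.

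It remains to bound the pointwise kernel of the localized operator, and the final step is then Cauchy--Schwarz:
\[
|u_h(x)|^2\le \|K(x,\cdot)\|_{L^2}^2\,\|\chi_\delta u_h\|_{L^2}^2+\text{errors}.
\]
Here $\|K(x,\cdot)\|^2_{L^2}=K_2(x,x)$ is the kernel on the diagonal of the operator $|\rho|^2(T(P_h-E_h)/h)\Op_h(\psi)^*\Op_h(\psi)$. By the standard local Weyl law with time window $T$, which is the Hörmander parametrix argument for $e^{-itP_h/h}$ with $|t|\le T$ small using $\partial_\xi p\neq0$, this diagonal kernel is
\[
\mathcal O\bigl(h^{-n}\cdot h/T\bigr)=\mathcal O\bigl(\delta^{-1}h^{-(n-1)}\bigr),
\]
because the spectral window has width $h/T$ and the density of states of $p$ near $E$ is bounded. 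Combining these estimates gives the main term $\delta^{-1}h^{-(n-1)}\int_{B(x,C\delta)}|u_h|^2$. The symbolic errors from the cutoff contribute the $\delta h^{-(n-1)-2\eps}$ term, and the microlocalization and tail errors give $h^N\|u_h\|^2_{L^2}$. All constants are uniform in $x$ by compactness.
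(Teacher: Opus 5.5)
Your route is sound in outline and differs genuinely from the paper's. If completed, it gives a stronger estimate.

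\textbf{How the paper argues.} The paper never uses the functional calculus of $P_h$. It first bounds $|u_h(x_0)|^2$ by $h^{-(n-1)}$ times the squared $L^2$ norm of $\chi_\delta\Op_h(a_j)u_h$ on a hypersurface $N_j$ through $x_0$, by restriction plus the $h^{-(n-1)/2}$ bound from $L^2$ to $L^\infty$ on $\R^{n-1}$. It then factorizes $p-E_h=\tilde p\,(\xi_1-b)$ in flowbox coordinates. This makes $v_h=\chi_\delta\Op_h(a_j)u_h$ solve a transversal evolution equation $(-ih\partial_{x_1}-B(x_1))v_h=r_h$. Averaging the resulting energy estimate over the slices $x_1\in(-\delta,\delta)$ produces the factor $\delta^{-1}$.

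\textbf{Where the extra term comes from.} The term $\delta h^{-(n-1)-2\eps}$ is born in that energy estimate. The source $r_h=R_h\widetilde\chi_\delta u_h$, with $R_h\in h^{1-\eps}\Psi_\eps$, is essentially the derivative of the cutoff falling on $u_h$ inside the ball. Duhamel's formula multiplies it by $h^{-1}$ over a length $\delta$. In your argument (Sogge's), the cutoff is applied to the kernel of $\Op_h(\psi)\rho(T(P_h-E_h)/h)$ after using the exact identity $u_h=\rho(T(P_h-E_h)/h)u_h$. Nothing is commuted through $P_h$ on $u_h$, so this loss never occurs, and your guess about where the second term comes from is misplaced.

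\textbf{What each route buys.} The paper's proof is purely local and avoids spectral theory, at the price of the $\delta h^{-2\eps}$ loss. Yours uses the global functional calculus, a short-time parametrix and a local Weyl bound. It yields $|u_h(x)|^2\lesssim \delta^{-1}h^{-(n-1)}\int_{B(x,C\delta)}|u_h|^2+h^N\|u_h\|^2_{L^2}$ for $h^\eps\le\delta\le\delta_0$; larger $\delta$ follows from the case $\delta=\delta_0$ thanks to the second term. This implies the statement and is the sharper form the paper conjectures.

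\textbf{Two points need fixing.}

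First, the far-field term $[\Op_h(\psi)\rho(T(P_h-E_h)/h)(1-\chi_\delta)u_h](x)$ cannot be ``absorbed into the second error term''. That term only sees $\int_{B(x,C\delta)}|u_h|^2$, whereas the far-field term is controlled only by the global norm. You must show it is $\mc{O}(h^N)\|u_h\|_{L^2}$, and you can, in either of two ways:
\begin{itemize}
\item Use Egorov's theorem in $S_\eps$, $\eps<1/2$. The full expansion of $e^{itP_h/h}\chi^{(1)}\Op_h(\psi)e^{-itP_h/h}$, with $\chi^{(1)}$ localizing to $B(x,\delta/4)$, has symbols supported in the flow-out. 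For $|t|\le T$ these stay at distance $\gtrsim\delta\ge h^\eps$ from $\supp(1-\chi_\delta)$.
\item Use non-stationary phase in the WKB parametrix of $e^{-itP_h/h}\Op_h(\psi)$. There $|\partial_\xi\varphi(t,x,\xi)-y|\ge|x-y|/2$, so each integration by parts gains $h/\delta\le h^{1-\eps}$.
\end{itemize}
Either way the errors iterate down to $h^N$; your ``Duhamel with $h^{1-2\eps}$-type losses'' should be replaced by this.

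Second, $\|K(x,\cdot)\|_{L^2}^2$ is the diagonal value of $KK^*$, not of the operator you wrote. Take $K=\Op_h(\psi)\rho(T(P_h-E_h)/h)$, so that $KK^*=\Op_h(\psi)|\rho|^2(T(P_h-E_h)/h)\Op_h(\psi)^*$. The bound $\mc{O}(T^{-1}h^{-(n-1)})$ is best justified as follows:
\begin{itemize}
\item Dominate $|\rho|^2(T(\lambda-E_h)/h)$ by $\sum_j C_N\langle jT\rangle^{-N}\mathbf{1}_{[E_h+jh,E_h+(j+1)h)}(\lambda)$.
\item Use the sharp pointwise bound $\|\mathbf{1}_{[\lambda,\lambda+h]}(P_h)\Op_h(\psi)^*\|^2_{L^2\to L^\infty}=\mc{O}(h^{-(n-1)})$ for $\lambda$ near $E$. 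This follows from the fixed-time parametrix and $\partial_\xi p\neq 0$.
\item The sum over $j$ then contributes $T^{-1}$. Energies away from $E$ are handled by the rapid decay of $\rho$ together with $\|\Op_h(\psi)^*\|^2_{L^2\to L^\infty}=\mc{O}(h^{-n})$.
\end{itemize}
Your heuristic ``density of states'' computation directly with window $T\gg h^{1/2}$ can be made to work. However, the $\mc{O}(t^2)$ part of the phase is then not negligible and must be handled by stationary phase in $(t,p)$ rather than dropped.
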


Here $\dd y$ denotes the Riemannian volume on $M$. In the case of the Laplacian, a similar (but sharper) statement can be found in \cite{Donnelly-01}. {\color{black}We believe that the above bound is not sharp and that the term $\delta h^{-(n-1)-2\eps}$ could be removed (see the comments in the proof below where this loss occurs). However, as it suffices for our purposes, we did not push the investigation further.} Letting $\delta=h^\eps$ in the previous proposition, we find:
\begin{equation}
\label{equation:bound-linfty}
|u_h(x)|^2 \leq C \left(h^{-(n-1)-\eps} \int_{B(x,C\delta)} |u_h(y)|^2\dd y + h^{N}\|u_h\|^2_{L^2(M)}\right).
\end{equation}
This bound will be used in the next section to establish our main theorem in the critical energy regime.

\begin{proof}[Proof]
Let $E_h := E+o(1)$. Fix $0 \leq \eps < 1/2$. Let $x_0 \in M$ be an arbitrary point and $N$ an embedded hypersurface passing through $x_0$. Let $\chi_{\delta}(y) = \chi(d(x_0,y)/\delta)$ where $\chi \in C^\infty_{\comp}(\R)$ is a nonnegative bump function such that $\chi \equiv 1$ on $(-1/2,1/2)$ and $0$ outside of $(-1,1)$, $\delta \geq h^{\eps}$. In what follows, we see $\chi_\delta$ as an exotic symbol in the class $S^0_\eps(T^*M)$, {\color{black}see \cite[Page 551]{Dyatlov-Zworski-19} for a definition. We let $\Psi^0_{h,\eps}(M)$ be the corresponding pseudodifferential class of operators.} \\

\emph{Step 1. Bounding the $L^\infty$ norm by the $L^2$ norm on hypersurfaces.} Let $K \subset T^*M$ be a compact subset. First, we claim that for all functions $f_h$ such that $\WF_h(f_h) \subset K$ and $\|f_h\|_{L^2} \leq 1$, one has:
\begin{equation}
\label{equation:bound-hypersurface}
|f_h(x_0)|^2 \leq C h^{-(n-1)} \|A\chi_\delta f_h\|^2_{L^2(N)} + \mc{O}(h^\infty),
\end{equation}
where $A = \Op_h(a)$ and $a \in C^\infty_{\comp}(T^*M)$ is a function with compact support, equal to $1$ on an open neighborhood of $K$. Indeed, as $f_h$ has a compact wavefront set contained in $K$, we can write $\chi_\delta f_h = A \chi_\delta f_h + \mc{O}_{C^\infty}(h^\infty)$. We then consider local coordinates $x = (x_1,x') \in \R^n$ in which $x_0$ is mapped to $0$, and $N$ is mapped to $\{x_1=0\}$. Let $b := b(x',\xi') \in C^\infty_{\comp}(T^*\R^{n-1})$ be a smooth compactly supported function equal to $1$ on large ball around $0$. As the supports of $1-b$ and $a|_{T^*\R^{n-1}}$ are disjoint, one finds by a standard non-stationary phase argument that
\begin{equation}
\label{equation:first}
(\mathbbm{1}-\Op_h^{\R^{n-1}}(b)) A \chi_\delta f_h(x_1=0,\bullet) = \mc{O}_{C^\infty(\R^{n-1})}(h^\infty).
\end{equation}

To prove \eqref{equation:bound-hypersurface}, it then suffices to combine \eqref{equation:first} with the following standard bound applied with $k=n-1$: if $R := \Op^{\R^k}_h(r)$ with $r \in C^\infty_{\comp}(T^*\R^k)$ compactly supported, then there exists a constant $C > 0$ such that $\|R\|^2_{L^2 \to L^\infty} \leq C h^{-k}$. \\

\emph{Step 2. Bounding the $L^2$ norm on hypersurfaces.} It now remains to bound the $L^2$ norm on hypersurfaces appearing on the right-hand side of \eqref{equation:bound-hypersurface}. For that, we use a propagation of singularities type of argument. We let $\Phi_t : T^*M\to T^*M$ be the bicharacteristic flow of $p$, and $H_p \in \mathcal{C}^{\infty}(T^*M,T T^*M)$ be its infinitesimal generator.

Let $(x_0, \xi_0) \in \{p=E_h\} \cap T^*_{x_0}M$. Since $\partial_\xi p \neq 0$, the bicharacteristic $t \mapsto \Phi_t(x_0,\xi_0)$ projects to a non-degenerate curve $t \mapsto \gamma(t) = \pi(\Phi_t(x_0,\xi_0))$ on $M$, where $\pi : T^*M \to M$ is the footpoint projection. Let $X$ be a vector field defined in a neighborhood of $\gamma$ such that $X = d\pi(H_p)$ on $\gamma$. By construction, $X$ is tangent to $\gamma$ and almost tangent to the projected bicharacteristics emanating from points $(x,\xi)$ close to $(x_0,\xi_0)$. We let $N \subset M$ be a hypersurface passing through $x_0$ and transverse to $X$, $V :=  \cup_{t \in (-{\color{black}\rho},{\color{black}\rho})} \varphi_t(N)$ where $(\varphi_t)_{t \in \R}$ denotes the flow generated by $X$, and $U$ be a small open subset in $T^*M$ containing $(x_0,\xi_0)$ projecting onto $V$.

We can then cover a neighborhood of $\{p=E_h\}$ around $x_0$ by open sets $U_j$ corresponding to a point $(x_0,\xi_j)$ as defined above. Let $\sum_j a_j = \mathbf{1}$ be a corresponding partition of unity in $T^*M$ (near $x_0$) and $N_j$ such that on each $\mathrm{supp}(a_j)$, the bicharacteristics of $p$ project to curves which intersect transversally $N_j$ and are almost tangent to the flowlines of a vector field $X_j$, $V_j = \cup_{t \in (-{\color{black}\rho_j},{\color{black}\rho_j})} \varphi_t^j(N_j)$. By \eqref{equation:bound-hypersurface}, we have:
\[
\begin{split}
|u_h(x_0)| & \leq \sum_j |\Op_h(a_j) u_h (x_0)| + \mc{O}(h^\infty) \\
& \leq C h^{-(n-1)/2} \sum_j \|A \chi_\delta \Op_h(a_j) u_h\|_{L^2(N_j)} + \mc{O}(h^\infty),
\end{split}
\]
{\color{black}where the $\mc{O}(h^\infty)$ term in the first inequality is due to the fact that the cover is only near $x_0$, and the operator $A$ is microlocalized near the compact energy shell $p^{-1}(E)$. Note that, we may further assume that $A \equiv \mathbf{1}$ microlocally on the microsupport of the functions $a_j$'s. Therefore, by separation of the microsupports, $(\mathbf{1}-A)\chi_\delta \Op_h(a_j) = \mc{O}_{\Psi^{-\infty}_h}(h^\infty)$ is smoothing and thus:}
\begin{equation}
\label{equation:decomposition}
|u_h(x_0)| \leq C h^{-(n-1)/2} \sum_j \|\chi_\delta \Op_h(a_j) u_h\|_{L^2(N_j)} + \mc{O}(h^\infty).
\end{equation}
Hence, it suffices to bound individually each term above in the sum to obtain the claimed statement.

Let $q_j(x,\xi) := \langle\xi,X_j(x)\rangle$ be the principal symbol of $-ihX_j$. Consider flowbox coordinates on each $V_j$ in which the vector $X_j$ is mapped to $\partial_{x_1}$ and $N_j$ is mapped to $\{x_1=0\}$. Let $\xi = (\xi_1,\xi')$ be the corresponding dual variables. {\color{black}In these coordinates, $q_j(x,\xi) = \xi_1$.} Note that $dp(\partial_{\xi_1}) \neq 0$, otherwise the bicharacteristics of the Hamiltonian $p$ would project to curves tangent to $N_j$ (by construction, they are all transverse to $N_j$ on $U_j$). As a consequence, on $U_j \subset T^*V_j$, we can write $\{p=E_h\} = \{\xi_1=b(x_1,x',\xi')\}$ for some smooth symbol $b$ independent of the $\xi_1$ coordinate. We thus have the factorization $p-E_h = \tilde{p} \cdot (q_j-b) {\color{black}= \tilde{p}\cdot(\xi_1-b)}$ on $U_j$ for some non-vanishing smooth symbol $\tilde{p}$. We therefore obtain:
\begin{equation}
\label{equation:syndic}
\chi_\delta a_j \tilde{p}^{-1} (p-E_h) = (\xi_1-b) \chi_\delta a_j.
\end{equation}

Now, we claim that, uniformly in $t \in (-\rho_j,\rho_j)$:
\begin{equation}
\label{equation:propagation}
\|\chi_\delta \Op_h(a_j) u_h\|_{L^2(N_j)} \lesssim \|\chi_\delta \Op_h(a_j) u_h\|_{L^2(\varphi_t(N_j))}+ {\color{black}|t|^{1/2} h^{-\eps} \|\widetilde{\chi}_\delta u_h\|_{L^2}},
\end{equation}
where $\widetilde{\chi}_\delta$ is a slightly larger bump function supported on $B(x_0,C\delta)$. {\color{black}Let us establish \eqref{equation:propagation}. Writing $B:=\Op^{\R^n}_h(b)$, where $\Op^{\R^n}_h$ denotes the standard (left) quantization in $\R^n$, using that $\chi_\delta$ is a symbol in $S^{0}_{\eps}(T^*\R^n)$, we find by quantizing \eqref{equation:syndic}:
\[
C_h (P_h-E_h) = (-ihX_j - B) \chi_\delta \Op_h(a_j) - R_h,
\]
where $R_h \in h^{1-\eps} \Psi^{\comp}_\eps(\R^n)$, and $C_h$ is a pseudodifferential operator with principal symbol $\chi_\delta a_j \tilde{p}^{-1}$. Note that the remainder is in $h^{1-\eps} \Psi^{\comp}_\eps(\R^n)$, which is better than the expected $h^{1-2\eps} \Psi^{\comp}_\eps(\R^n)$ given by composition of exotic pseudodifferential operators, because it is the product of a standard pseudodifferential operator with an exotic one (and not the product of two exotic ones). We emphasize that it is at this stage that we lose some exponent. Therefore
\[
(-ihX_j - B) \chi_\delta \Op_h(a_j) u_h = R_h u_h =: r_h.
\]
Note that $\WF_h(R_h) \subset \{\chi_\delta \neq 0\}$. Hence, using that $\widetilde{\chi}_\delta$ is a bump function such that $\widetilde{\chi}_\delta \equiv 1$ on $\{\chi_\delta \neq 0\}$ and supported in $B(x_0,C\delta)$ for some some $C > 0$, we find that
\begin{equation}
\label{equation:matin}
r_h = R_h \widetilde{\chi}_\delta u_h + \mc{O}(h^\infty).
\end{equation}}

%
%
%
%
%
%
%

Since $X_j=\partial_{x_1}$ and $b = b(x_1,x',\xi')$, we have that $\Op_h^{\R^n}(b) = \Op_h^{\R^{n-1}}(b(x_1,\bullet,\bullet)) =: B(x_1)$, where $x_1$ should be thought of as a time parameter. From now on, we will therefore write $t=x_1$. For a fixed $t$, $B(t)$ is a (compactly supported) semiclassical pseudodifferential operator acting on $\R^{n-1}$ and the family $t \mapsto B(t)$ is smooth. Hence, the propagator $(t,s) \mapsto U(t,s)$ of the time-dependent family of operators $t \mapsto B(t)$, which satisfies
\[
-ih \partial_{t} U(t,s) = B(t)U(t,s), \qquad U(t,t)=\mathbf{1}
\]
is well-defined (see \cite[Theorem 6.2.3]{Lefeuvre-book} and the remark below for instance). Notice that $(t,s) \mapsto U(t,s) \in \mc{L}(L^2(\R^{n-1}))$ is uniformly bounded (as the principal symbol of $B(t)$ is real-valued) and invertible. Let $v_h := \chi_\delta \Op_h(a_j) u_h$. By definition, $(-ih\partial_t - B(t))v_h(t) = r_h(t)$. The bound \eqref{equation:propagation} is equivalent to $\|v_h(0)\|_{L^2(\R^{n-1})} \lesssim \|v_h(t)\|_{L^2(\R^{n-1})} +|t|^{1/2} h^{-\eps} \|\widetilde{\chi}_\delta u_h\|_{L^2})$.

Using Duhamel's formula, we obtain:
\[
v_h(t) = U(t,0)v_h(0) + \dfrac{i}{h} \int_0^t U(t,s) r_h(s) \dd s.
\]
This yields:
\[
\begin{split}
\|v_h(0)\|_{L^2(\R^{n-1})} & = \|U(t,0)^{-1} U(t,0) v_h(0)\|_{L^2(\R^{n-1})} \lesssim \|U(t,0)v_h(0)\|_{L^2(\R^{n-1})} \\
&\lesssim \|v_h(t)\|_{L^2(\R^{n-1})} + h^{-1} \int_0^t \|U(t,s) r_h(s)\|_{L^2(\R^{n-1})} \dd s \\
& \lesssim \|v_h(t)\|_{L^2(\R^{n-1})} + h^{-1} \int_0^t \|r_h(s)\|_{L^2(\R^{n-1})} \dd s,
\end{split}
\]
where the last inequality follows from the uniform boundedness of the propagator. By Cauchy-Schwarz, the last term is bounded, using \eqref{equation:matin}, by 
\[
\begin{split}
h^{-1}|t|^{1/2} \left(\int_0^t \|r_h(s)\|^2_{L^2(\R^{n-1})} \dd s\right)^{1/2} & \lesssim h^{-1}|t|^{1/2} \|r_h\|_{L^2(\R^{n-1} \times [0,t])} \lesssim |t|^{1/2}h^{-\eps} \|\widetilde{\chi}_\delta u_h\|_{L^2}.
\end{split}
\]
This proves \eqref{equation:propagation}.

We then integrate \eqref{equation:propagation} for $t$ between $(-\delta,\delta)$ and obtain:
\[
\begin{split}
2\delta \|\chi_\delta \Op_h(a_j) u_h\|_{L^2(N_j)}^2 & \leq \int_{-\delta}^{+\delta} \|\chi_\delta \Op_h(a_j) u_h\|_{L^2(\varphi_t(N_j))}^2 \dd t + \delta^{2}h^{-2\eps}\|\widetilde{\chi}_\delta u_h\|^2_{L^2} \\
& \lesssim \|\chi_\delta \Op_h(a_j)u_h\|^2_{L^2(M)} +\delta^{2} h^{-2\eps}\|\widetilde{\chi}_\delta u_h\|^2_{L^2} \\
& \leq (1+\delta^{2} h^{-2\eps})\|\widetilde{\chi}_\delta u_h\|^2_{L^2(M)} + \mc{O}(h^\infty),\\
\end{split}
\]
where we have used in the last line that $\widetilde{\chi}_\delta \equiv 1$ on the support of $\chi_\delta$ (this accounts for the $\mc{O}(h^\infty)$ remainder). Inserting this estimate in \eqref{equation:decomposition}, we find
\[
|u_h(x)|^2 \lesssim (\delta^{-1}  h^{-(n-1)}  + \delta h^{-(n-1)-2\eps})\|\widetilde{\chi}_\delta u_h\|^2_{L^2(M)} + h^N\|u_h\|^2_{L^2(M)},
\]
where the last term comes from the $\mc{O}(h^\infty)$ remainders. This proves the claim.
\end{proof}

\section{Proof of main results}


\label{section:proof}

\subsection{Saturation of Hörmander's bound} We begin by proving Theorem \ref{theorem:magneticzonalstates}. At energy $E=0$, the Hörmander bound is saturated by coherent states at the ground state, namely holomorphic sections $H^0(\Sigma,L^{\otimes k})$, see \cite[Proposition 4.5]{Charles-Lefeuvre-25}. They can be constructed at any point $p_0 \in \Sigma$ using the Bergman kernel. Hence, throughout, we will further assume that $0 < E < E_c$.

We now follow some ideas of the proof of \cite[Proposition 4.6]{Charles-Lefeuvre-25} to construct Gaussian beams and then the magnetic zonal states. Let $(m_k)_{k \geq 0} \geq 0$ be a sequence such that $k^{-2} \lambda_{k,m_k} \to_{k \to +\infty} E$. For simplicity, we drop the dependence of $m_k$ on $k$ in the following and simply write $m$. Recall that $a$ denotes the principal symbol of the family of twisted semiclassical pseudodifferential operators $(\mathbf{A}_k)_{k \geq 0}$, see \S\ref{ssection:averaging} (and Remark~\ref{remark:cutoff} to justify that we may regard this family as genuine pseudodifferential operators on the energy shells $E < E_c$).

{\color{black}Let $p_0 \in \Sigma$. Using \eqref{equation:lambdamk} and the relation $p = \beta \circ a$, we find that
\begin{equation}
\label{equation:ck}
T_{p_0}^*\Sigma \cap a^{-1}(m/k) = C(p_0,E_k) = T_{p_0}^*\Sigma \cap p^{-1}(E_k)
\end{equation}
where $E_k = \tfrac{m}{k}(B-\tfrac{m}{2k}) \to_{k \to +\infty} E$. The set $C(p_0,E_k)$ is diffeomorphic to a circle for $E > 0$.}

\subsubsection{Adapted symplectic coordinates} In the following, we drop the dependence on $p_0$ and $E$ in the notation of $\T^2$ introduced in \eqref{equation:torus}. {\color{black}The circle $C(p_0,E_k)$ defined in \eqref{equation:ck} is simply denoted $C$; note that it also depends on $k$ but this is not emphasized.} Fix $z_0 \in C$. We begin by finding suitable local coordinates around $z_0$:

\begin{lemma}
\label{lemma:normal}
    For all $z_0 \in C$, there exists {\color{black}$\eps > 0$}, a neighborhood ${\color{black}\tilde{U}_0}$ of $z_0$ in $T^*\Sigma$ and a symplectic diffeomorphism $\kappa : ({\color{black}\tilde{U}_0},\Omega) \to (V,\omega)$, where $V \subset \R^4$ is an open set, $\omega$ is the canonical $2$-form on $\R^4$, such that:
    \begin{enumerate}[label=\emph{(\roman*)}]
        \item $\kappa(z_0) = 0$;
        \item $\kappa({\color{black}C \cap \tilde{U}_0}) \subset \{(0,0,0)\}\times (-\eps,\eps)$;
        \item Letting $z : (-\eps,\eps) \to C$ be the restriction of $\kappa^{-1}$ to $\{(0,0,0)\}\times (-\eps,\eps)$, we have that $z'(\xi_2) = \partial_\theta$ is the generator of the $2\pi$-periodic rotation in $C$;
        \item The symplectomorphism $\kappa$ trivializes the symbol $a$ in the sense that
    \[a(\kappa^{-1}(x_1,x_2,\xi_1,\xi_2)) - m / k = \xi_1.\]
    \end{enumerate}
\end{lemma}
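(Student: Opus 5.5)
This is a Darboux–Carathéodory type construction: we complete the pair of commuting functions $(a-m/k, \cdot)$ into a symplectic chart adapted to the circle $C$. First, the key observation is that the circle $C$ is isotropic and transverse to the Hamiltonian vector field $H_a^\Omega$. Indeed, $C$ lies in a single fiber $T^*_{p_0}\Sigma$, and $d\pi(H_a^\Omega)\neq 0$ on $\{p=E_k\}$ with $E_k>0$, because the projected magnetic geodesics have speed $\sqrt{2E_k}$ up to the reparametrization factor $\beta'(a)\neq 0$. Also $C\subset a^{-1}(m/k)$ by \eqref{equation:ck}, so $da(\partial_\theta)=0$, and $da\neq 0$ at $z_0$ since $E_k>0$.

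\textbf{Step 1.} Construct the conjugate coordinate $\xi_2$. Define a function $\xi_2$ near $z_0$ by requiring $\xi_2 = \theta-\theta(z_0)$ on $C$ and $\{a,\xi_2\}_\Omega = 0$. Concretely, extend $\theta$ from $C$ to a hypersurface $S\supset C$ transverse to $H_a^\Omega$, with $S$ chosen so that $\{a,\xi_2\}=0$ is solvable. Then flow along $H_a^\Omega$, setting $\xi_2(\Phi^a_s(w))=\xi_2(w)$ for $w\in S$. We need $da$ and $d\xi_2$ to be independent at $z_0$. This holds because $d\xi_2(\partial_\theta)=1$ while $da(\partial_\theta)=0$. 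Set $\xi_1 := a-m/k$; then $\{\xi_1,\xi_2\}=0$.

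\textbf{Step 2.} Apply Carathéodory–Jacobi–Lie. Since $\xi_1,\xi_2$ Poisson-commute and have independent differentials at $z_0$, the standard theorem gives functions $x_1,x_2$ near $z_0$ with $\Omega=d\xi_1\wedge dx_1+d\xi_2\wedge dx_2$. The construction goes as follows. Choose a Lagrangian submanifold $\Lambda$ through $z_0$ transverse to the joint flows, containing $C$ locally. Define $x_j$ as the times of the commuting flows of $H_{\xi_1},H_{\xi_2}$ starting from $\Lambda$, up to the usual sign convention. Setting $\kappa=(x_1,x_2,\xi_1,\xi_2)$, item (iv) holds by construction, and we may normalize $\kappa(z_0)=0$.

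\textbf{Step 3.} Arrange items (ii) and (iii). This is the main point: we need $x_1=x_2=\xi_1=0$ on $C$. We have $\xi_1=0$ on $C$ automatically. To get $x_1=x_2=0$, we need $C$ to lie in the initial Lagrangian $\Lambda$ used in Step 2. This is possible because $C$ is an isotropic curve, as any curve in a $4$-dimensional symplectic manifold is, and it is transverse to $H_a$. Hence $C$ extends to a Lagrangian surface $\Lambda=\bigcup_{|s|<\eps}\Phi^{\xi_2}_s$-images; more simply, take $\Lambda$ to be a Lagrangian surface containing $C$ and transverse to the span of $H_{\xi_1},H_{\xi_2}$. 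We must also check that $H_{\xi_2}$ is not tangent to $\Lambda$ along $C$; this can be ensured by the choice of $\Lambda$, using the isotropic extension theorem. Then, on $C$, we have $\xi_2=\theta-\theta(z_0)$, so $\kappa^{-1}(0,0,0,\xi_2)$ traces $C$ with $z'(\xi_2)=\partial_\theta$, which is (iii). Finally, shrink $\tilde U_0$ so that $C\cap\tilde U_0$ is exactly the arc $|\xi_2|<\eps$.

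The delicate step is the compatibility in Steps 1–3. The function $\xi_2$ must equal $\theta$ on $C$ and commute with $a$, and $C$ must sit inside the initial Lagrangian for the Carathéodory coordinates. I would handle both at once as follows. Pick a Lagrangian $\Lambda_0\supset C$ transverse to $H_a$ (possible since $C$ is isotropic and $H_a\notin TC\oplus$ any suitable complement). Prescribe $\xi_2$ on $\Lambda_0$ as an extension of $\theta$, and propagate it by the flow of $H_a$, which automatically gives $\{a,\xi_2\}=0$.
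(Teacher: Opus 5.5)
Your argument is correct in substance and is essentially the paper's. The paper also takes $\xi_2$ to be an $H_a$-invariant extension, from a hypersurface transverse to $H_a$, of a parameter along $C$. It then defines $x_1,x_2$ as the flow times of the commuting pair $(a-m/k,\xi_2)$ starting from a Lagrangian surface containing $C$.

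The only real difference is in how the auxiliary objects are produced. The paper first applies the Weinstein neighbourhood theorem to the $H_a$-flow-out of $C$. In those coordinates the transversal hypersurface $\{y^1=0\}$ and the Lagrangian through $C$, namely $\{y^1=\eta_2=0\}$, are coordinate subspaces. It then restores unit speed along $C$ at the end via $(x_2,\xi_2)\mapsto(x_2/\psi'(\xi_2),\psi(\xi_2))$. You build in $\xi_2|_C=\theta-\theta(z_0)$ directly, which is fine. But you then have to justify by hand that a Lagrangian $\Lambda\supset C$ transverse to $\mathrm{span}(H_a,H_{\xi_2})$ exists, and that is where your write-up has slips.

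(1) The set $\bigcup_s\Phi^{\xi_2}_s(C)$ is not Lagrangian. A flow-out of $C$ under $H_f$ is Lagrangian only if $f$ is constant on $C$, whereas $\Omega(H_{\xi_2},\partial_\theta)=\pm d\xi_2(\partial_\theta)=\pm 1$. Such a surface would also contain $H_{\xi_2}$, which is the opposite of what you want. Use instead the $H_f$-flow-out of $C$ for some $f$ with $f|_C=0$ and $H_f\notin\mathrm{span}(\partial_\theta,H_a)$ along $C$. Such $f$ exists: as $df|_C$ ranges over covectors killing $\partial_\theta$, $H_f$ ranges over the $3$-dimensional space $(\R\partial_\theta)^{\Omega}$.

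(2) The compatibility issue you worry about does not arise. At points of $C$ you have $T\Lambda\subset(\R\partial_\theta)^{\Omega}$. You also have $(\R\partial_\theta)^{\Omega}\cap\mathrm{span}(H_a,H_{\xi_2})=\R H_a$, because $da(\partial_\theta)=0\neq d\xi_2(\partial_\theta)$. Hence transversality to $\mathrm{span}(H_a,H_{\xi_2})$ is equivalent to $H_a\notin T\Lambda$, whatever $\xi_2$ is, so $\Lambda$ may be fixed first. The condition $H_{\xi_2}\notin T\Lambda$ that you propose to check is automatic.

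(3) In your closing paragraph, prescribing $\xi_2$ on the $2$-dimensional $\Lambda_0$ and propagating by $H_a$ only defines it on a $3$-manifold. You must prescribe it on a hypersurface $S\supset C$ transverse to $H_a$, as in your Step 1.
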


{\color{black}Note that $\kappa$ depends on $k$ but this dependence is uniform as $k \to +\infty$, that is all its derivatives are uniformly bounded. For simplicity, this dependence is not emphasized in the notations.}

\begin{proof}
    Observe that $C$ is transverse to the Hamiltonian flow of $a$ with respect to the magnetic $2$-form $\Omega$. Let $y^2 \in (-\eps,\eps)\subset \R \mapsto z(y^2) \in C$ be a local coordinate for $C$ around $z_0$ such that $z(0) = z_0$, and let
    \[
    \Lambda := \{\Phi_{y^1}^{{\color{black}a}}(z(y^2)) ~:~ (y^1,y^2) \in U \subset \R^2\}
    \]
    be the flow-out of $C$ with respect to $(\Phi_{y^1}^{a})_{y^1 \in (-\eps,\eps)}$, the Hamiltonian flow of $a$ with respect to $\Omega$. Here, $U$ is a neighborhood of $(0,0)$ in $\R^2$ and a local coordinate domain for $\Lambda$ around $z_0$. Note that $\Lambda$ is a Lagrangian submanifold of $(T^*\Sigma,\Omega)$ (Lemma \ref{lemma:lagrangian-torus}). Thus, by the Weinstein tubular theorem, there exists a local symplectic diffeomorphism 
    \[
    \phi : (\mc{U},\omega_{\mathrm{Liouv}}) \subset T^*\Lambda \to ({\color{black}\tilde{U}_0},\Omega) \subset T^*\Sigma,
    \]
    where $\omega_{\mathrm{Liouv}}$ is the canonical Liouville $2$-form on $T^*\Lambda$, {\color{black}$\mc{U}$ is a neighborhood of the zero section of $T^*\Lambda$}, and $\tilde{U}_0$ is a neighborhood of $z_0$ in $T^*\Sigma$; in addition, after identification of $\Lambda$ with the zero section of $T^*\Lambda$, $\phi_{|\Lambda}$ is the identity.
    
    We now identify $(\mc{U},\omega_{\mathrm{Liouv}})$ to a neighborhood of $0$ in $(\R^4,\omega_{\R^4})$, with coordinates $(y^1,y^2,\eta_1,\eta_2)$, where $\omega_{\R^4} = d\eta \wedge dy$ is the canonical $2$-form on $\R^4$. In these coordinates, $C\cap \mc{U} = \{y^1 = \eta_1 = \eta_2 = 0\}$ and $\Lambda = \{\eta_1=\eta_2=0\}$. If we define $\tilde{a}(y^1,y^2, \eta_1,\eta_2) := a(\phi(y^1,y^2,\eta_1,\eta_2)) - m/k$, then $\tilde{a}$ vanishes on (the image of) $\Lambda$ in $\R^4$ (hence on the image of $C$). In addition,
    \[
    \Phi_t^{\tilde{a}}(y^1,y^2,0,0) = (y^1 + t, y^2, 0,0)
    \]
    where $(\Phi_t^{\tilde{a}})_{t \in \R}$ denotes the Hamiltonian flow of $\tilde{a}$ computed with respect to the canonical form $\omega_{\R^4}$. We then consider the linear symplectic change of variables $(q^1,q^2,p_1,p_2) := (y^1, -\eta_2, \eta_1, y^2)$. In these coordinates, $C\cap \mc{U}$ is a subset of $\{(0,0,0)\}\times \R$, and
    \begin{equation}
        \label{equation:dphit}
    \Phi_t^{\tilde{a}}(q^1,0,0,p_2) = (q^1 + t, 0, 0,p_2).
    \end{equation}

    
    
    Let $b$ be a smooth function defined near $0$ in $\R^4$, equal to $b=p_2$ on $\{0\} \times \R^3$, and such that $H_{\tilde{a}} b = 0$ (such a function exists since, by \eqref{equation:dphit}, $H_{\tilde{a}}(0) = \partial_{q^1}$, and this vector field is transverse to $\{0\} \times \R^3$). Note that $H_b(0) = \partial_{q_2}$ is transverse to $H_{\tilde{a}}(0)$ at $0$ (hence locally around $0$). For any $(q,p)$ near $0$, there exists unique times $t_1,t_2 \in \R$ and unique points $(0,0,p_1',p_2') \in \{(0,0)\}\times \R^2$ such that $(q,p) = \Phi_{t_1}^{\tilde{a}} \circ \Phi_{t_2}^{b} (0,0,p_1',p_2')$. By construction, $dt_1(H_{\tilde{a}})=1=dt_2(H_b)$ and $dt_1(H_b)=0=dt_2(H_{\tilde{a}})$. This easily implies that $(q,p) \mapsto (x_1,x_2,\xi_1,\xi_2) := (t_1(q,p),t_2(q,p),\tilde{a}(q,p),b(q,p))$ is a symplectic change of variables fixing $\{(0,0,0)\}\times \R$; in these coordinates, $\tilde{a}$ is mapped to $\xi_1$.
    
    Finally, to satisfy item (iii), it suffices to postcompose the map constructed here by a symplectic diffeomorphism $(x_1,x_2,\xi_1,\xi_2) \mapsto (x_1,x_2/\psi'(\xi_2), \xi_1, \psi(\xi_2))$, where $\psi : \R \to \R$ is a well-chosen diffeomorphism making the norm of $z'$ constant equal to $1$.
\end{proof}

\subsubsection{Construction of magnetic zonal modes}  Let $U \subset \Sigma$ be a small neighborhood of $p_0$, and $s \in C^{\infty}(U,L)$ be a local section such that $|s| = 1$ pointwise on $U$. As $(\mathbf{A}_k)_{k \geq 0}$ is a twisted family of pseudodifferential operators (see \cite[Lemma 4.1]{Charles-Lefeuvre-25}), there exists a family of standard semiclassical pseudodifferential operators $A_{1/k} \in \Psi_{1/k}(U)$ (with semiclassical parameter $h := 1/k$) such that for all $f \in C^{\infty}_{\comp} (U)$, one has
\[
A_{1/k} f = \overline{s}^{\otimes k} \mathbf{A}_k (f s^{\otimes k}),
\]
where the equality holds on $U$, see \S\ref{ssection:twisted-pseudos}. {\color{black}Note that, strictly speaking, spectral cutoffs should be inserted around $\mathbf{A}_k$, but we omit them to avoid burdening the notation (see Remark~\ref{remark:cutoff}).} Writing locally $\nabla s = -i\beta \otimes s$ for some $1$-form $\beta \in C^{\infty}(U,T^*U)$, we have that the principal symbol of $A_{1/k}$ is given by
\[
\sigma_{A_{1/k}}(x,\xi) = \sigma_{\mathbf{A}}(x, \xi -\beta(x) ; k),
\]
see \eqref{equation:psymbol}. We may rewrite this as $\sigma_{A_{1/k}} = S^*  \sigma_{\mathbf{A}}$ using the magnetic shift
\[
S(x,\xi) := (x,\xi - \beta(x)).
\]

Recall that the twisted symplectic $2$-form is defined as $\Omega = \omega_0 + i\pi^*F_{\nabla}$, where $\omega_0$ is the Liouville $2$-form on $T^*\Sigma$, $\pi : T^*\Sigma \to \Sigma$ is the projection and $F_{\nabla}$ is the curvature of the line bundle $L \to \Sigma$. The trivializing section $s$ allows to identify the connection $\nabla$ on $L$ with $d-i\beta$; thus $F_\nabla = -id\beta$ on $U$ and $\Omega = \omega_0 + \pi^*d\beta$. Observe that $S^* \Omega = \omega_0$. Hence, setting $K := \kappa \circ S$ where $\kappa$ was introduced in Lemma \ref{lemma:normal}, we find that $K$ is defined on a neighborhood {\color{black}$\tilde{U} := S^{-1}(\tilde{U}_0)$} of $S^{-1}(z_0) = z_0 + \beta(\pi(z_0)) =: Z_0 \in T_{p_0}^*\Sigma$, and is a symplectic diffeomorphism
\[
K : (\tilde{U},\omega_0) \to (V,\omega_{\R^4})
\]
where $V \subset \R^4$ is an open set and $\omega_{\R^4}$ is the canonical $2$-form of $\R^4$. In addition, $K(Z_0) = 0$, $K(S^{-1}(C)) \subset \{(0,0,0,\R)\}$, and 
\[
\sigma_{A_{1/k}}(K^{-1}(x_1,x_2,\xi_1,\xi_2)) - m/k = \xi_1.
\]
In particular, we may parameterize $C$ locally around $p_0 = S(K^{-1}(0))$ by
\[
C = \{z(\xi_2) := S\circ K^{-1}(0,0,0,\xi_2)) ~:~ \xi_2 \in (-\eps,\eps)\},
\]
for some $\eps > 0$, and $z'(\xi_2) = \partial_\theta$.

We then quantize $K$ locally around $Z_0$ by a unitary semiclassical Fourier Integral Operator $T$ such that $T$ microlocally conjugates $A_{1/k} - m/k$ to $k^{-1}D_{x_1}$. That is $T : L^2(U) \to L^2(\R^2)$ where $U \subset \Sigma$ is the open subset defined above in a neighborhood of $p_0$ and
\begin{equation}
\label{equation:tt}
T^{-1} \circ k^{-1}D_{x_1} \circ T \equiv A_{1/k} - m/k, \qquad T^*T \equiv \mathbf{1},
\end{equation}
{\color{black}where both equalities hold microlocally on $\tilde{U}$, the neighborhood introduced above,} and $\equiv$ stands for equality modulo $\mc{O}_{L^2}(k^{-\infty})$. From now on, we will use $h=1/k$ as a semiclassical parameter.

For later use, it will be useful to recall that the Schwartz kernel of $T^{-1}$ has the following form:

\begin{lemma}
\label{lemma:calor}
In a neighborhood of $(p_0,0) \in \Sigma \times \R^2$, one has:
\begin{equation}
    \label{equation:schwartz}
T^{-1}(p,x) = \dfrac{1}{(2\pi h)^{2}}\int_{\R^2} e^{\frac{i}{h} (\phi(p,\eta)-x\cdot\eta)} q(p,\eta ; h) \dd \eta,
\end{equation}
where $\phi$ and $q$ satisfy the following properties:
\begin{enumerate}[label=\emph{(\roman*)}]
    \item $\phi$ is a non-degenerate phase function such that $K^{-1}(x,\eta) = (p,\xi)$ if and only if $\xi = \partial_p \phi(p,\eta)$ and $x = \partial_\eta \phi(p,\eta)$; the nondegeneracy of $\phi$ means that
    \begin{equation}
    \label{equation:two}
    \det(\partial^2_{p,\eta}\phi) \neq 0,
\end{equation}
for all $(p,\eta)$ in the support of $q$.
\item Additionally, for all $\xi_2$ near $0$, one has 
\begin{equation}
    \label{equation:calculs}
    \partial_\eta\phi(p_0,(0,\xi_2)) = 0.
\end{equation}

    \item $q = q_0 + \mc{O}(h)$, where $q_0$ is a {\color{black}real-valued} symbol of order zero which is, uniformly in $k$, compactly supported in $\eta$, and {\color{black}$q_0 > 0$} near $(p_0,0)$.
\end{enumerate} 
\end{lemma}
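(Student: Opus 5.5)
The plan is to derive the kernel representation from the standard local theory of Fourier integral operators associated with a canonical graph, and then to read items (ii) and (iii) off the geometry of the normal form in Lemma \ref{lemma:normal}.

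\emph{Generating function and item (i).} Since $K:(\tilde U,\omega_0)\to(V,\omega_{\R^4})$ is a symplectomorphism with $K(Z_0)=0$, I first want a generating function of the mixed type $\phi(p,\eta)$ with
\[
\operatorname{graph}(K^{-1})=\{(\partial_\eta\phi,\eta;p,\partial_p\phi)\}.
\]
Such a function exists exactly when the projection $(x,\xi;p,\eta)\mapsto(p,\eta)$ restricted to the graph is a local diffeomorphism near the base point, i.e.\ when $dK$ sends the vertical fibre $T_{Z_0}(T^*_{p_0}\Sigma)$ onto a space transverse to $\{\eta=\text{const}\}$; in other words $\partial\eta/\partial\xi$ must be invertible at $Z_0$.

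The main obstacle is that nothing in Lemma \ref{lemma:normal} guarantees this transversality. I would remove it by exploiting the freedom in the construction. We may postcompose $\kappa$ by a linear symplectic map of $\R^4$ that preserves $\xi_1$ and the line $\{(0,0,0)\}\times\R$, for instance a shear $(x_2,\xi_2)\mapsto(x_2+c\,\xi_2,\xi_2)$ combined with $x_1\mapsto x_1+\ell(x_2,\xi_2)$. Items (i)--(iv) survive this change, and a generic choice achieves transversality.

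A cleaner route, which I would try first, is to compute directly. In the coordinates of Lemma \ref{lemma:normal}, $C\cap\tilde U_0$ lies in $T^*_{p_0}\Sigma$ and is parametrised by $\xi_2$. The vertical fibre at $Z_0$ is spanned by $\partial_\theta=\partial_{\xi_2}$ and by the radial direction, which is transverse to $\{a=m/k\}$, so it has a nonzero $\partial_{\xi_1}$-component. The fibre's image therefore contains directions with nonzero $d\xi_1$ and $d\xi_2$, which already gives transversality.

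With the generating function in hand, the standard construction (Hörmander; Zworski's \emph{Semiclassical Analysis}, Chapter 11) writes any FIO quantizing $K^{-1}$, microlocally near $(p_0,0)$, as an oscillatory integral of the form \eqref{equation:schwartz}. The condition \eqref{equation:two} is precisely the graph property.

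\emph{Item (ii).} On $C$ we have $K^{-1}(x,\eta)=(p_0,\cdot)$ with $x=(0,0)$ and $\eta=(0,\xi_2)$. The relation $x=\partial_\eta\phi(p,\eta)$ then gives $\partial_\eta\phi(p_0,(0,\xi_2))=0$ for all small $\xi_2$, which is \eqref{equation:calculs}.

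\emph{Item (iii).} The amplitude is $q=q_0+\mc O(h)$, where $q_0$ is determined up to its phase by the unitarity requirement $T^*T\equiv\mathbf 1$. Stationary phase on the composition shows $|q_0|=|\det\partial^2_{p,\eta}\phi|^{1/2}$ times a positive density factor, and this is nonzero by \eqref{equation:two}. A priori $q_0$ carries a Maslov-type phase $e^{i\psi(p,\eta)}$. We are free to replace $T$ by $e^{i\Op(\psi)}$-type corrections, or simply to absorb a constant phase and a smooth real phase into $\phi$ (changing $\phi$ by an $\mc O(h)$-irrelevant term is not allowed, so I would instead multiply $T^{-1}$ by a unitary pseudodifferential operator $\Op_h(e^{-i\psi})$ commuting to leading order with $k^{-1}D_{x_1}$). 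This makes $q_0>0$ near $(p_0,0)$ while preserving \eqref{equation:tt}. The conjugation $\Op_h(e^{-i\psi})$ commutes with $D_{x_1}$ only if $\psi$ does not depend on $x_1$; along $C$ this holds, and in general we correct the subprincipal term by solving a transport equation, as in Egorov's theorem.

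Finally, compact support in $\eta$, uniformly in $k$, comes from inserting microlocal cutoffs; the uniformity in $k$ itself follows from the uniform bounds on $\kappa$.
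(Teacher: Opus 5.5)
Your items (i) and (ii) are sound. For (ii) you apply the ``only if'' half of (i) at the points $K^{-1}(0,0,0,\xi_2)\in T^*_{p_0}\Sigma$. This is the paper's argument in a more direct form; the paper instead inverts $p\mapsto\partial_\eta\phi(p,\eta)$ and uses the ``if'' half. For (i), your transversality check is a genuine addition, since the paper simply calls (i) standard. Indeed $dK(\partial_\theta)=\partial_{\xi_2}$ and $da(\partial_r)\neq 0$ for $E>0$. So $d\eta$ restricted to $dK(T_{Z_0}T^*_{p_0}\Sigma)$ is a triangular $2\times 2$ matrix with nonzero diagonal, and a generating function $\phi(p,\eta)$ exists. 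Drop the shear fallback, though. The map $(x_2,\xi_2)\mapsto(x_2+c\xi_2,\xi_2)$ moves the line $\{(0,0,0)\}\times\R$, so item (ii) of Lemma \ref{lemma:normal} does not survive it.

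The gap is in (iii).

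\emph{Leading-order commutation is not enough.} An operator that commutes with $k^{-1}D_{x_1}$ only to leading order does not preserve \eqref{equation:tt}. That identity holds modulo $\mc O(h^\infty)$, and it is used inside $e^{itk(A_{1/k}-m/k)}$, where an $\mc O(h)$ error becomes $\mc O(1)$. The commutator $[hD_{x_1},\Op_h(e^{-i\psi})]=\tfrac{h}{i}\Op_h(\partial_{x_1}e^{-i\psi})$ is exactly of that size, so you are forced to take $\partial_{x_1}\psi=0$.

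\emph{The transport equation fixes the phase.} Once $\partial_{x_1}\psi=0$, the $x_1$-dependence of $\arg q_0$ is no longer yours to choose. The order-$h$ term of $(A_{1/k}-m/k)T^{-1}\equiv T^{-1}k^{-1}D_{x_1}$ is a transport equation along the flow of $a$, which is $\partial_{x_1}$ in the model. Its real part fixes $\partial_{x_1}\arg q_0$ in terms of the subprincipal symbol of $A_{1/k}$. Your closing sentence, which corrects the subprincipal term by solving a transport equation, therefore reintroduces exactly the phase you were trying to remove.

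\emph{That phase is nonzero here.} Write $\mathbf A_k\sim a+ha_1$ in \eqref{equation:algebraic-identity} and use that $k^{-2}\Delta_k$ has vanishing subprincipal symbol. This gives $(B-a)(a_1+\tfrac12)=0$, i.e.\ $a_1=-\tfrac12$. With the normalization $\xi_1\circ\kappa=a-m/k$ of Lemma \ref{lemma:normal}, $\arg q_0$ therefore drifts linearly in $x_1$. So $q_0$ cannot be made real on a full neighborhood of $(p_0,0)$ while keeping \eqref{equation:tt}.

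\emph{What can be salvaged.} The $x_1$-independent freedom $\psi=\psi(x_2,\eta)$ does let you make $q_0$ real and positive on the hypersurface $\{\partial_{\eta_1}\phi(p,\eta)=0\}$, i.e.\ $x_1=0$. In particular this covers the points $(p_0,(0,\xi_2))$, which is all that Lemmas \ref{lemma:f} and \ref{lemma:linfty} use. Alternatively, replace $m/k$ by $(m+\tfrac12)/k$ in Lemma \ref{lemma:normal}. This removes the subprincipal mismatch, the transport equation becomes $\partial_{x_1}\arg q_0=0$, and the full-neighborhood statement becomes attainable.
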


{\color{black}Although not emphasized in the notations, the phase $\phi$ also depends on $h$ as the map $K$ does but this dependence is uniform.} Note that the canonical relation of $T^{-1}$ is given locally by
\begin{equation}
    \label{equation:canonical-relation}
    \mc{C} = \{(K^{-1}(x,\eta), (x,\eta)) ~:~ (x,\eta) \in V\}.
\end{equation}

\begin{proof}
Items (i) and (iii) are standard, see \cite[Chapter 25, \S3]{Hormander-4} or \cite[Chapter 10, \S2]{Zworski-12} for instance. To prove \eqref{equation:calculs}, observe that $(p,\eta) \mapsto (\partial_\eta \phi(p,\eta), \eta)$ is a (local) diffeomorphism near $(p_0,0)$ as $\det(\partial^2_{p,\eta}) \neq 0$. Hence, for all $(0,\xi_2) \in \R^2$ near $0$, there is a unique $p_\star$ such that $(0,(0,\xi_2)) = (\partial_\eta\phi(p_\star,(0,\xi_2)), (0,\xi_2))$. Hence, $\partial_\eta\phi(p_\star,(0,\xi_2)) = 0$. As $K^{-1}(0,(0,\xi_2)) = K^{-1}(\partial_\eta\phi,\eta) = (p_\star, \partial_p \phi(p_\star, (0,\xi_2)))$ and, by construction, the image by $K^{-1}$ of $(0,(0,\xi_2))$ is contained in the fiber $T^*_{p_0}\Sigma$ above $p_0$, one has $p_\star = p_0$. This proves \eqref{equation:calculs}.
\end{proof}


For $\xi \in \R^2$ in a neighborhood of $0$, we now define the Gaussian coherent state $\Psi_{\xi} \in C^{\infty}_{\comp}(\R^2)$ by
\begin{equation}
    \label{equation:gaussian-states}
\Psi_{\xi}(x) := (2 \pi h)^{-1/2} e^{-\frac{1}{2h} |x|^2 + \frac{i}{h} x \cdot \xi} \phi(x),
\end{equation}
where $\phi \in C_{\comp}^{\infty}(\R^2)$ is a bump function localized around $0$ such that $\phi \equiv 1$ in a neighborhood of $0$. {\color{black}Note that this is not an $L^2$ normalized state (it has $L^2$ norm $1/\sqrt{2}$); this choice will be convenient later.} For $z = z(\xi_2) \in C$ close to $z_0$, we set
\[
e_{k,m,z(\xi_2)} := T^{-1} \Psi_{0,\xi_2} \in C^{\infty}(\Sigma,\C),
\]
and consider the Gaussian coherent state
\[
\mathbf{e}_{k,m,z} := e_{k,m,z} s^{\otimes k} \in C^{\infty}(\Sigma, L^{\otimes k}).
\]
As in \cite[Proposition 4.6]{Charles-Lefeuvre-25}, we introduce the \textit{Gaussian beam}
\begin{equation}
    \label{equation:gaussian-beams}
\mathbf{f}_{k,m,z} := h^{-1/4} \Pi_{k,m} \mathbf{e}_{k,m,z}
\end{equation}
associated with $\mathbf{e}_{k,m,z}$. We emphasize that $\mathbf{f}_{k,m,z}$ for $z$ close to $z_0$ \emph{does depend} on a choice of symplectomorphism $K$ and Fourier Integral Operator $T$. {\color{black}As we will see below in \eqref{equation:ffprime}, the sections $\mathbf{f}_{k,m,z}$ are (up to a constant multiple) $L^2$ normalized.}




Finally, we introduce the eigenfunction
\begin{equation}
\label{equation:calor}
\mathbf{u}_{k,m,p_0} := h^{-1/4} \int_{\R} \chi(\xi_2) \mathbf{f}_{k,m,z(\xi_2)} \dd \xi_2,
\end{equation}
where $\chi \in C^\infty_{\mathrm{comp}}(\R)$ is a smooth nonnegative localizer with support in $(-\eps,\eps)$ (where $\eps > 0$ was introduced in Lemma \ref{lemma:normal}). We call $\mathbf{u}_{k,m,p_0} \in C^\infty(\Sigma,L^{\otimes k})$ a \emph{magnetic zonal state}. We will show that it saturates Hörmander's $L^\infty$ bound; however, its defect measure is not yet the Lebesgue measure on the full torus $\T^2(p_0,E)$, see \S\ref{sssection:modif} where this is further discussed. {\color{black}A heuristic for the normalizing factor $h^{-1/4}$ is given in the paragraph following \eqref{equation:magnetic-zonal-state}.}





\subsubsection{Estimating the $L^2$ norm} We begin by studying the $L^2$ norm of $\mathbf{u}_{k,m,p_0}$. Recall that $z : (-\eps,\eps) \to C$ is the local diffeomorphism provided by Lemma \ref{lemma:normal} and induced by the symplectomorphism $K$. We first estimate the following scalar product:

\begin{lemma} For $\xi_2,\xi_2'$ close to $0$, there holds:
\begin{equation}
    \label{equation:ffprime}
\langle\mathbf{f}_{k,m,z(\xi_2)}, \mathbf{f}_{k,m,z(\xi_2')}\rangle_{L^2(\Sigma,L^{\otimes k})} = {\color{black}\dfrac{1}{2\sqrt{\pi}}} e^{-\frac{1}{4h} (\xi_2 - \xi_2')^2} + \mc{O}(h^{\infty}).
\end{equation}
\end{lemma}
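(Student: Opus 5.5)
We will reduce the inner product of Gaussian beams to an explicit Gaussian integral on the model side $\R^2$, using the conjugation \eqref{equation:tt}. Since $\Pi_{k,m}$ is an orthogonal projector, we have $\langle \mathbf{f}_{k,m,z}, \mathbf{f}_{k,m,z'}\rangle = h^{-1/2}\langle \Pi_{k,m}\mathbf{e}_{k,m,z}, \mathbf{e}_{k,m,z'}\rangle$. Write $\Pi_{k,m} = \frac{1}{2\pi}\int_0^{2\pi} e^{-imt}e^{itk\mathbf{A}_k}\dd t = \frac{1}{2\pi}\int_0^{2\pi} e^{it(k\mathbf{A}_k - m)}\dd t$. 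The coherent states $\mathbf{e}_{k,m,z}$ are microlocalized at $z \in C$, hence in a compact subset of $D^*\Sigma$, so Remark \ref{remark:cutoff} lets us treat $\mathbf{A}_k$ as a genuine twisted pseudodifferential operator. Locally $e^{it(k\mathbf{A}_k-m)}$ is a unitary FIO quantizing the $2\pi$-periodic flow $\Phi^a_t$.

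\textbf{Step 1: short times.} For $t$ in a small interval around $0$ (mod $2\pi$), $\Phi^a_t(z)$ stays inside $\tilde U_0$. There, by \eqref{equation:tt} and Egorov-type conjugation, $T e^{it(kA_{1/k}-m)}T^{-1} \equiv e^{itD_{x_1}}$ microlocally. This operator is translation by $t$ in $x_1$, up to $\mc{O}(h^\infty)$ when applied to states microlocalized in $V$ (using $T^*T\equiv\mathbf 1$ and unitarity of $T$ microlocally). The contribution of these times is
\[
\frac{h^{-1/2}}{2\pi}\int_{|t|<\delta}\langle \Psi_{0,\xi_2}(\cdot - (t,0)),\Psi_{0,\xi_2'}\rangle_{L^2(\R^2)}\dd t.
\]
This is an explicit Gaussian computation. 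The $x_2$-integral gives $e^{-(\xi_2-\xi_2')^2/(4h)}(\pi h)^{1/2}(2\pi h)^{-1/2}$-type factors, and the $x_1$-integral gives $\sqrt{\pi h}\,e^{-t^2/(4h)}$. Integrating in $t$ over $|t|<\delta$ (extended to $\R$ up to $\mc{O}(h^\infty)$) yields a further $\sqrt{4\pi h}$. Bookkeeping the powers $(2\pi h)^{-1}\cdot \sqrt{\pi h}\cdot\sqrt{\pi h}\cdot\sqrt{4\pi h}\cdot h^{-1/2}/(2\pi)$ gives the constant $\frac{1}{2\sqrt\pi}$. The bump function $\phi$ in \eqref{equation:gaussian-states} contributes only $\mc{O}(h^\infty)$.

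\textbf{Step 2: remaining times.} For $\delta \le |t| \le 2\pi-\delta$, the point $\Phi^a_t(z)$ is at positive distance from $\Phi^a_0(z')=z'$, uniformly for $z,z'$ near $z_0$. This holds because the orbit through $z$ is embedded in phase space of period exactly $2\pi$, and the orbits of nearby points of $C$ are distinct. Hence $e^{it(k\mathbf{A}_k-m)}\mathbf{e}_{k,m,z}$ is microlocalized near $\Phi^a_t(z)$, disjoint from the microsupport of $\mathbf{e}_{k,m,z'}$. The pairing is therefore $\mc{O}(h^\infty)$, uniformly in $t$, by disjointness of semiclassical wavefront sets. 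The phase $e^{-imt}$ is absorbed since $\Pi_{k,m}$ was written with it exactly.

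\textbf{Main obstacle.} The main difficulty is making Step 1 rigorous with uniform $\mc{O}(h^\infty)$ errors. Egorov and the conjugation \eqref{equation:tt} only hold microlocally on $\tilde U$, while the Gaussian $\Psi_{0,\xi_2}$ has tails. We need the propagated state to remain in $\tilde U$ for $|t|<\delta$, which holds since the translated Gaussian stays centered in $V$. The $h^{-1/2}$ gain must then be checked to exactly compensate the $\sqrt h$ from the $t$-integration. An alternative route, which we will use if the FIO propagation is awkward, is to write $\mathbf{e}_{k,m,z}$ on the model side as an $x_1$-Fourier superposition. There $\Pi_{k,m}$ acts microlocally as projection onto the frequency $\xi_1=0$, since $k\mathbf{A}_k-m$ corresponds to $D_{x_1}$ with integer spectrum, so that $\frac{1}{2\pi}\int_0^{2\pi}e^{itD_{x_1}}\dd t$ restricted to functions periodic along the flow gives the $\xi_1=0$ mode. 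The same Gaussian integral then yields \eqref{equation:ffprime}.
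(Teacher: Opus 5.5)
Your proposal is correct and follows essentially the same route as the paper. You reduce to $h^{-1/2}\langle \Pi_{k,m}\mathbf{e}_{k,m,z},\mathbf{e}_{k,m,z'}\rangle$ and write $\Pi_{k,m}$ as the time-average of the $2\pi$-periodic propagator $e^{it(k\mathbf{A}_k-m)}$. You then discard times away from $0$ (mod $2\pi$) by wavefront-set disjointness, using that orbits have exact period $2\pi$, conjugate by $T$ to turn the short-time propagator into translation in $x_1$, and evaluate the Gaussian integral to get $\frac{1}{2\sqrt{\pi}}$, exactly as the paper does. Drop the fallback via a ``$\xi_1=0$ projection'' on the model side: it is not needed, and it does not literally make sense on $L^2(\R^2)$, where $\frac{1}{2\pi}\int_0^{2\pi}e^{itD_{x_1}}\,\dd t$ is not a projector.
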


\begin{proof}
    Write, with $z := z(\xi_2)$ and $z' := z(\xi_2')$:
    \[
    \begin{split}
h^{1/2} \langle\mathbf{f}_{k,m,z}, \mathbf{f}_{k,m,z'}\rangle_{L^2(\Sigma,L^{\otimes k})} &= \langle\Pi_{k,m} \mathbf{e}_{k,m,z}, \Pi_{k,m}\mathbf{e}_{k,m,z'}\rangle_{L^2(\Sigma,L^{\otimes k})} \\
&= \langle\Pi_{k,m} \mathbf{e}_{k,m,z}, \mathbf{e}_{k,m,z'}\rangle_{L^2(\Sigma,L^{\otimes k})}\\
&= \frac{1}{2\pi} \int_0^{2\pi} \langle e^{itk(\mathbf{A}_k - m/k)} \mathbf{e}_{k,m,z}, \mathbf{e}_{k,m,z'}\rangle_{L^2(\Sigma,L^{\otimes k})} \dd t \\
&= \frac{1}{2\pi} \int_0^{2\pi} \langle e^{itk(A_{1/k} - m/k)} e_{k,m,z}, e_{k,m,z'}\rangle_{L^2(\Sigma)} \dd t,
\end{split}\]
where we have used in the last equality that $|s|=1$ pointwise on $U$.

Since $e^{2i\pi k(\mathbf{A}_k - m/k)} = \mathbf{1}$, we may write 
\[
h^{1/2}\langle\mathbf{f}_{k,m,z}, \mathbf{f}_{k,m,z'}\rangle_{L^2(\Sigma,L^{\otimes k})} = \frac{1}{2\pi} \int_{-\pi}^{\pi} \langle e^{itk(A_{1/k} - m/k)} e_{k,m,z}, e_{k,m,z'}\rangle_{L^2(\Sigma)} \dd t.
\]
Similarly to the proof of \cite[Proposition 4.6]{Charles-Lefeuvre-25}, by elementary wavefront set arguments, the integral is modified by a $\mc{O}(h^{\infty})$ when restricted to a neighbourhood of $t = 0$, say $t\in \mathrm{supp}(\zeta)$ where $ 0\leq \zeta \leq 1$, $\zeta(0) \equiv 1$ on a small neighborhood of $0$ and $\zeta \in C^{\infty}_{\comp}(\R)$. {\color{black}Indeed, this is because the Hamiltonian flow of $\mathbf{A}_k$ is transverse to $C(p_0,E)$, and $\Phi_t(z) \neq z$ for all $0 < t < T_E$. Consequently, for $t$ outside of a small neighborhood of $0$, the evolved wave packet $e^{itk(A_{1/k} - m/k)} e_{k,m,z}$ is microlocalized at a strictly positive distance away from $z'$ (provided $z'$ is close enough to $z$). We also emphasize that, strictly speaking, we should be working with $\mathbf{A}_k$ multiplied by an appropriate smooth microlocal cutoff; as already mentioned in Remark \ref{remark:cutoff}, we do not insist on this point.} In particular, we may choose $\zeta$ such that the microsupports of $e^{itk(A_{1/k} - m/k)} e_{k,m,z}$ and $e_{k,m,z'}$ are inside $U$. Therefore, modulo $\mc{O}(h^{\infty})$, using that $T$ is unitary, we find:
\begin{equation}
    \label{equation:simplification}
\langle e^{itk(A_{1/k} - m/k)} e_{k,m,z}, e_{k,m,z'}\rangle_{L^2(\Sigma)} = \langle e^{t\partial_{x_1}} \Psi_{0,\xi_2}, \Psi_{0,\xi_2'}\rangle_{L^2(\R^2)}.
\end{equation}

Thus, writing $\equiv$ as above for equality modulo $\mc{O}(h^\infty)$, we find:
\[
\begin{split}
    h^{1/2}\langle\mathbf{f}_{k,m,z}&, \mathbf{f}_{k,m,z'}\rangle_{L^2(\Sigma,L^{\otimes k})} \\
    & \equiv (2\pi)^{-1}\int \zeta(t) \langle e^{t\partial_{x_1}} \Psi_{(0,\xi(z))}, \Psi_{(0,\xi(z'))}\rangle_{L^2(\R^2)} \dd t\\
    & \equiv  (2\pi)^{-1}\int_{\R}\int_{\R^2} \Psi_{(0,\xi(z))}(x_1 + t,x_2) \overline{\Psi_{(0,\xi(z'))}(x_1,x_2)} \dd x_1 \dd x_2 \dd t \\
    & \equiv  (2\pi)^{-1}(2\pi h)^{-1}\int_{\R} \int_{\R^2} e^{-\frac{1}{2h} ((x_1 + t)^2 + x_2^2 + x_1^2 + x_2^2) + \frac{i}{h} (\xi_2 - \xi_2') x_2} \dd x_1 \dd x_2 \dd t  \\
    & \equiv  (2\pi)^{-1}(2\pi h)^{-1}\int_{\R} \int_{\R^2} e^{-\frac{t^2}{{\color{black}4}h}} e^{-\frac{1}{h}( (x_1 + t/2)^2 + x_2^2) + \frac{i}{h} (\xi_2 - \xi_2') x_2} \dd x_1 \dd x_2 \dd t  \\
    & \equiv  1/2 \cdot (2\pi)^{-1}\int_{\R} e^{-\frac{t^2}{{\color{black}4}h}} e^{-\frac{1}{4h} (\xi_2 - \xi_2')^2}\dd t  \equiv  {\color{black}\dfrac{h^{1/2}}{2\sqrt{\pi}}}e^{-\frac{1}{4h} (\xi_2 - \xi_2')^2}.
\end{split}
\]
This proves the claim. 
\end{proof}

As a consequence, we obtain:

\begin{lemma}
\label{lemma:l2}
{\color{black}The following holds:
\[ \|\mathbf{u}_{k,m,p_0}\|^2_{L^2(\Sigma,L^{\otimes k})} = \|\chi\|^2_{L^2(\R)} + \mc{O}(h).
\]}
\end{lemma}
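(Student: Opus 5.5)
The plan is to expand the square of the norm, using the definition \eqref{equation:calor}, and to feed in the pairwise scalar product \eqref{equation:ffprime}. By bilinearity and Fubini (everything is smooth and compactly supported in $\xi_2$),
\[
\|\mathbf{u}_{k,m,p_0}\|^2_{L^2(\Sigma,L^{\otimes k})} = h^{-1/2}\int_{\R}\int_{\R} \chi(\xi_2)\chi(\xi_2')\,\langle \mathbf{f}_{k,m,z(\xi_2)}, \mathbf{f}_{k,m,z(\xi_2')}\rangle_{L^2(\Sigma,L^{\otimes k})}\,\dd\xi_2\,\dd\xi_2'.
\]
The supports of $\chi$ lie in $(-\eps,\eps)$, where \eqref{equation:ffprime} applies. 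I also need that the $\mc{O}(h^\infty)$ remainder in \eqref{equation:ffprime} is uniform in $(\xi_2,\xi_2')$ on this compact set. This follows from the proof of that lemma, since all the wavefront set and FIO arguments are uniform in the parameter $\xi_2$ near $0$. After multiplication by $h^{-1/2}$ and integration over a compact set, the remainder still contributes $\mc{O}(h^\infty)$.

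This reduces the problem to the Gaussian integral
\[
I_h := \frac{h^{-1/2}}{2\sqrt{\pi}}\int_{\R^2} \chi(\xi_2)\chi(\xi_2')\, e^{-\frac{1}{4h}(\xi_2-\xi_2')^2}\,\dd\xi_2\,\dd\xi_2'.
\]
I change variables $\xi_2' = \xi_2 + \sqrt{h}\,s$, so that $\dd\xi_2' = \sqrt{h}\,\dd s$ and the factor $h^{-1/2}$ cancels:
\[
I_h = \frac{1}{2\sqrt{\pi}}\int_{\R}\int_{\R}\chi(\xi_2)\chi(\xi_2+\sqrt{h}s)\,e^{-s^2/4}\,\dd s\,\dd\xi_2.
\]
Next I Taylor expand $\chi(\xi_2+\sqrt h s) = \chi(\xi_2) + \sqrt h s\chi'(\xi_2) + \tfrac12 h s^2 \chi''(\xi_2+\vartheta\sqrt h s)$. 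The zeroth-order term gives $\frac{1}{2\sqrt\pi}\cdot\sqrt{4\pi}\,\|\chi\|^2_{L^2} = \|\chi\|^2_{L^2(\R)}$, since $\int e^{-s^2/4}\dd s = 2\sqrt\pi$; this confirms that the normalization $1/(2\sqrt\pi)$ in \eqref{equation:ffprime} is exactly right. The first-order term vanishes because $s e^{-s^2/4}$ is odd. The second-order term is bounded by $C h\|\chi\|_{L^\infty}\|\chi''\|_{L^\infty}\,|\supp\chi|\int s^2 e^{-s^2/4}\dd s = \mc{O}(h)$.

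The computation itself is routine. The only point needing care is the uniformity of the $\mc{O}(h^\infty)$ error in \eqref{equation:ffprime} with respect to $\xi_2,\xi_2'$, together with the fact that \eqref{equation:ffprime} holds for all pairs in $\supp\chi$ and not only for nearby ones. The proof of \eqref{equation:ffprime} localizes $t$ near $0$ by using that $z'$ is close to $z$. If $\eps$ is small, then $z(\xi_2)$ and $z(\xi_2')$ are both close to $z_0$, so $\Phi_t(z)$ stays away from $z'$ for $t$ outside a fixed neighborhood of $0$. If necessary, I would shrink $\eps$ in Lemma \ref{lemma:normal} to guarantee this. Combining the three contributions gives $\|\mathbf{u}_{k,m,p_0}\|^2 = \|\chi\|^2_{L^2(\R)} + \mc{O}(h)$.
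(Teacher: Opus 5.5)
Your proposal is correct and follows essentially the same route as the paper: expand the square using \eqref{equation:calor}, insert \eqref{equation:ffprime}, rescale the difference variable by $\sqrt{h}$, and Taylor expand to second order, where the odd first-order term vanishes. Your rescaling $\xi_2'=\xi_2+\sqrt{h}\,s$ is equivalent to the paper's $u=\xi_2-\xi_2'$, $v=u/(2\sqrt{h})$. Your additional remarks on the uniformity of the $\mc{O}(h^\infty)$ remainder and on possibly shrinking $\eps$ make explicit points that the paper leaves implicit.
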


\begin{proof}
Using \eqref{equation:ffprime}, we obtain:
\[
\begin{split}
    \|\mathbf{u}_{k,m,p_0}\|_{L^2(\Sigma,L^{\otimes k})}^2 &= h^{-1/2} \iint_{{\color{black}\R \times \R}}\langle\textbf{f}_{k,m,z(\xi_2)}, \textbf{f}_{k,m,z(\xi_2')}\rangle_{L^2(\Sigma,L^{\otimes k})} \chi(\xi_2) \chi(\xi_2') \dd \xi_2 \dd \xi_2' \\
    &= {\color{black}\dfrac{h^{-1/2}}{2\sqrt{\pi}}} \iint_{{\color{black}\R \times \R}} e^{-\tfrac{1}{4h}(\xi_2 - \xi_2')^2} \chi(\xi_2) \chi(\xi_2') \dd \xi_2 \dd \xi_2' + \mc{O}(h^{\infty}).
\end{split}
\]
{\color{black}Making the successive changes of variables $u = \xi_2-\xi_2'$ and then $v=u/(2\sqrt{h})$, we find that
\[
\begin{split}
\iint_{\R \times \R} e^{-\tfrac{1}{4h}(\xi_2 - \xi_2')^2} \chi(\xi_2) \chi(\xi_2') \dd \xi_2 \dd \xi_2'  & = \iint_{\R \times \R} e^{-\tfrac{1}{4h}u^2} \chi(\xi_2) \chi(\xi_2-u) \dd \xi_2 \dd u \\
& = 2 \sqrt{h} \iint_{\R \times \R} e^{-v^2} \chi(\xi_2) \chi(\xi_2-2\sqrt{h}v) \dd \xi_2 \dd v \\
& = 2\sqrt{\pi h} \|\chi\|^2_{L^2(\R)} + \mc{O}(h^{3/2}),
\end{split}
\]
where the remainder term is obtained by a Taylor expansion to order $2$ of $v \mapsto \chi(\xi_2-2\sqrt{h}v)$ at $v=0$, observing that the term of order $1$ does not contribute as it is odd with respect to $v$. This proves the claim.}

\end{proof}

\subsubsection{Estimating the $L^\infty$ norm}

We now estimate the $L^\infty$ norm of $\mathbf{u}_{k,m,p_0}$ at $p_0$. We start by the following:

\begin{lemma}
\label{lemma:f}
For any $z \in C \cap \tilde{U}$, there holds
\[
\mathbf{f}_{k,m,z}(p_0) = h^{-1/4} {\color{black}\dfrac{q_0(p_0,(0,\xi_2))}{2\pi}} s^{\otimes k}(p_0) + \mc{O}(h^{3/4}).
\]
\end{lemma}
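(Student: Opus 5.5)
We follow the same route as in the proof of \eqref{equation:ffprime}: write $\Pi_{k,m}$ as a time average of the propagator, localize the time integral near $t=0$, conjugate by $T$, and evaluate the resulting kernel by stationary phase.

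\emph{Step 1: localize in time.} Using $\Pi_{k,m} = \frac{1}{2\pi}\int_{-\pi}^{\pi} e^{itk(\mathbf{A}_k-m/k)}\dd t$, we have
\[
h^{1/4}\mathbf{f}_{k,m,z}(p_0) = \frac{1}{2\pi}\int_{-\pi}^{\pi} \big(e^{itk(A_{1/k}-m/k)}e_{k,m,z}\big)(p_0)\, s^{\otimes k}(p_0)\dd t .
\]
For $t$ outside a small neighbourhood of $0$, the evolved packet is microlocalized near $\Phi^a_t(z)$. This point lies in $T^*_{\pi(\Phi^a_t(z))}\Sigma$ with footpoint different from $p_0$ (projected magnetic geodesics return to $p_0$ only at multiples of the period). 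Hence its value at $p_0$ is $\mc{O}(h^\infty)$, and we may insert the cutoff $\zeta(t)$.

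\emph{Step 2: conjugate by $T$.} For $t\in\supp\zeta$, the relation \eqref{equation:tt} gives
\[
e^{itk(A_{1/k}-m/k)}e_{k,m,z} \equiv T^{-1}e^{t\partial_{x_1}}\Psi_{0,\xi_2} = T^{-1}\Psi_{0,\xi_2}(\cdot+te_1).
\]
The $t$-integral then becomes
\[
\frac{1}{2\pi}\int \zeta(t)\,T^{-1}\big[\Psi_{0,\xi_2}(\cdot + t e_1)\big](p_0)\dd t .
\]
Up to the bump function $\phi$, which is irrelevant modulo $\mc{O}(h^\infty)$, this is $T^{-1}$ applied to
\[
g(x) := (2\pi h)^{-1/2} e^{-x_2^2/2h + ix_2\xi_2/h}\int \zeta(t)e^{-(x_1+t)^2/2h}\dd t .
\]
Here $\int e^{-(x_1+t)^2/2h}\dd t = \sqrt{2\pi h}$ for $|x_1|$ small, up to $\mc{O}(h^\infty)$. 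So effectively $g(x) = e^{-x_2^2/2h+ix_2\xi_2/h}$, a function independent of $x_1$, localized near $x_2=0$ and oscillating with frequency $(0,\xi_2)$.

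\emph{Step 3: evaluate with the kernel of Lemma \ref{lemma:calor}.} We compute
\[
(T^{-1}g)(p_0) = (2\pi h)^{-2}\iint e^{\frac{i}{h}(\phi(p_0,\eta) - x\cdot\eta)} q(p_0,\eta;h)\, g(x)\dd x\dd\eta .
\]
The $x_1$-integral yields $2\pi h\,\delta(\eta_1)$. The $x_2$-integral is a Gaussian Fourier transform, giving
\[
\sqrt{2\pi h}\, e^{-(\eta_2-\xi_2)^2/2h}.
\]
This leaves
\[
(2\pi h)^{-1}\sqrt{2\pi h}\int e^{\frac{i}{h}\phi(p_0,(0,\eta_2))} e^{-(\eta_2-\xi_2)^2/2h}\, q(p_0,(0,\eta_2))\dd\eta_2 .
\]
By \eqref{equation:calculs}, $\partial_\eta\phi(p_0,(0,\eta_2))=0$, so $\phi(p_0,(0,\eta_2))$ is constant in $\eta_2$. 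We normalize this constant to $0$; this is a constant phase, which can be absorbed in $T$ or in $s$. This normalization is exactly why all beams have the same phase at $p_0$. The remaining Gaussian integral gives $\sqrt{2\pi h}\,q_0(p_0,(0,\xi_2)) + \mc{O}(h^{3/2})$. Hence
\[
(T^{-1}g)(p_0) = q_0(p_0,(0,\xi_2)) + \mc{O}(h).
\]
Dividing by $2\pi$ and by $h^{1/4}$ gives
\[
\mathbf{f}_{k,m,z}(p_0) = h^{-1/4}\,\frac{q_0(p_0,(0,\xi_2))}{2\pi}\, s^{\otimes k}(p_0) + \mc{O}(h^{3/4}),
\]
as claimed.

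\emph{Main obstacle.} The delicate point is justifying the exchange between the propagator and $T$ pointwise at $p_0$: \eqref{equation:tt} only holds microlocally modulo $\mc{O}_{L^2}(h^\infty)$, and we need a pointwise statement. I would upgrade these errors to $C^0$ errors using compact microsupport together with a Sobolev embedding in $h$-dependent norms, which loses only finitely many powers of $h$ against $\mc{O}(h^\infty)$.

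A second point to check is the constancy of $\eta_2\mapsto\phi(p_0,(0,\eta_2))$. It follows from \eqref{equation:calculs} only when one also uses that $\partial_{\eta_2}\phi$ vanishes along the whole curve $\{(p_0,(0,\eta_2))\}$, which is what \eqref{equation:calculs} asserts for all $\xi_2$ near $0$.
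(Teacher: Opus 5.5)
Your Step 1 relies on a claim that holds on $\HH^2$ but is false on $\Sigma$: a projected magnetic geodesic issued from $p_0$ can pass through $p_0$ again before one full period. The trajectories starting at $i\in\HH^2$ sweep out $\D(i,R_E)$, and each interior point other than $i$ is hit twice (proofs of Lemmas \ref{lemma:footprojection} and \ref{lemma:nmber-bounded}). As soon as this disk contains another lift $\gamma.i$ of $p_0$, with $\gamma\in\Gamma\setminus\{1\}$, the corresponding directions return to $T^*_{p_0}\Sigma$ at an intermediate time. This always happens for $E$ close to $E_c$, since $R_E\to+\infty$. These directions are exactly the finite set $\mc{R}$ of \S\ref{sssection:modif}.

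For $z\in\mc{R}$ the evolved packet is microlocalized over $p_0$ near the return time $t_*$. The $t$-integral there contributes, by the same computation as near $t=0$ but at the point $\Phi^a_{t_*}(z)\in C$, a term of the same order $h^{-1/4}$ with an uncontrolled phase. A partial contribution of this size also survives for $z$ within $\mc{O}(h^{1/2})$ of $\mc{R}$. So inserting $\zeta(t)$ is not an $\mc{O}(h^\infty)$ change uniformly in $z\in C\cap\tilde U$, and the stated formula fails for such $z$.

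The paper's own justification (transversality of the flow to $C(p_0,E)$) only disposes of small $|t|\neq 0$, so it has the same hole. The repair is to take $z_0\notin\mc{R}$ and shrink $\tilde U$ so that $\overline{C\cap\tilde U}\cap\mc{R}=\emptyset$. By compactness, the footpoint of $\Phi^a_t(z)$ then stays at distance $\geq c>0$ from $p_0$ for all $z$ in this arc and $\delta\leq|t|\leq\pi$, and your wavefront argument goes through. Nothing downstream is lost:
Theorem \ref{theorem:magneticzonalstates} needs only one such arc, and the cutoffs $\chi_n^{(i)}$ of \S\ref{sssection:modif} are already supported away from $\mc{R}$.
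In Lemma \ref{lemma:linfty} the bad values of $\xi_2$ would in any case contribute only $\mc{O}(1)\ll h^{-1/2}$.

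Apart from this, Steps 1--2 follow the paper, while Step 3 is a genuinely different evaluation. The paper applies complex stationary phase in all four variables $(x,\eta)$ to $\phi(p_0,\eta)-x\cdot\eta+\tfrac{i}{2}x_2^2+x_2\xi_2$. It then has to compute a $4\times 4$ Hessian determinant, which equals $1$ because $\partial^2_{\eta_2}\phi(p_0,(0,\xi_2))=0$. You instead integrate out $x$ explicitly and are left with a one-dimensional Laplace integral in $\eta_2$, where the constancy of $\eta_2\mapsto\phi(p_0,(0,\eta_2))$ is used directly. Your route shows transparently why no determinant factor appears; the paper's route handles the cutoffs automatically.

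The step you should make rigorous is ``the $x_1$-integral yields $2\pi h\,\delta(\eta_1)$''. Your $x_1$-profile $(2\pi h)^{-1/2}\int\zeta(t)e^{-(x_1+t)^2/2h}\,\dd t$ equals $1$ only for $|x_1|$ small, and \eqref{equation:schwartz} is only valid locally anyway. So this is really a stationary phase in $(x_1,\eta_1)$, with nondegenerate critical point $\eta_1=0$, $x_1=\partial_{\eta_1}\phi(p_0,(0,\eta_2))$. That this critical point is $x_1=0$, where the profile equals $1$ modulo $\mc{O}(h^\infty)$, is the $\eta_1$-component of \eqref{equation:calculs}, which you use only implicitly. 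The resulting error is $\mc{O}(h)$ relative, consistent with the claimed remainder.
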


\begin{proof}
Write, for $z = z(\xi_2)$,
\[
\begin{split}
h^{1/4}\textbf{f}_{k,m,z}(p_0) &= \frac{1}{2\pi} \int_{-\pi}^\pi \left(e^{itk (\textbf{A}_k - m/k)} \textbf{e}_{k,m,z}\right)(p_0) \dd t \\
&= \left(\frac{1}{2\pi} \int_{-\pi}^\pi \zeta(t) \left(e^{itk (A_{1/k} - m/k)} e_{k,m,z}\right)(p_0) \dd t\right) s^{\otimes k}(p_0) + \mc{O}(h^{\infty})\\
&= \left(T^{-1} \left(\frac{1}{2\pi} \int_{\R} \zeta(t) e^{t\partial_{x_1}} \Psi_{0,\xi_2} \dd t \right)\right)(p_0) s^{\otimes k}(p_0) + \mc{O}(h^{\infty}),
\end{split}\]
{\color{black}where $\zeta$ is a smooth cutoff function equal to $1$ near $0$. As above, this is allowed because the Hamiltonian flow of $\mathbf{A}_k$ is transverse to $C(p_0,E)$ so that, evaluating at $p_0$, only the behaviour for $t$ in an arbitrarily small neighborhood of $0$ is relevant.} The quantity we want to study is thus
\[
h^{1/4} f_{k,m,z}(p_0) := \left(T^{-1} \left(\frac{1}{2\pi} \int_{\R} \zeta(t) e^{t\partial_{x_1}} \Psi_{0,\xi_2} dt \right)\right)(p_0).
\]
To compute it, we use the expression \eqref{equation:schwartz} for the Schwartz kernel of $T^{-1}$. First, observe that for $x = (x_1,x_2)$ close to $0$, we have:
\[
\begin{split}
     \int_{\R} \zeta(t) e^{t\partial_{x_1}} \Psi_{(0,\xi_2)}(x) \dd t &= \int_{\R} \zeta(t) \Psi_{(0,\xi_2)}(x_1 + t,x_2) \dd t \\
     &= (2\pi h)^{-1/2} \int_{\R} \zeta(t) e^{-\frac{1}{2h} ((x_1 + t)^2 + x_2^2)} e^{\frac{i}{h} x_2 \xi_2} \dd t \\
     &= (2\pi h)^{-1/2} e^{-\frac{1}{2h} x_2^2 + \frac{i}{h} x_2 \xi_2}  \int_{\R} \zeta(t)  e^{-\frac{(x_1 + t)^2}{2h}} \dd t \\
     &= e^{-\frac{1}{2h} x_2^2 + \frac{i}{h} x_2 \xi_2} \psi_h(x_1),
\end{split}\]
where $x_1 \mapsto \psi_h(x_1)$ is a smooth function, uniformly bounded in $h$ as well as all its derivatives, such that $\psi_h(0) = 1 + \mc{O}(h^\infty)$. Using \eqref{equation:schwartz}, we may write (recall that $\equiv$ stands for equality modulo $\mc{O}(h^\infty)$):
\[
\begin{split}
h^{1/4} f_{k,m,z} (p_0) & \equiv  \dfrac{1}{(2\pi h)^{2}} \int_{\R^2 \times \R^2} e^{\frac{i}{h}(\phi(p_0,\eta)-x\cdot \eta)} q(p_0,\eta ; h)e^{-\frac{1}{2h} x_2^2 + \frac{i}{h} x_2 \xi_2} \psi_h(x_1) \dd x_1 \dd x_2 \dd \eta \\
& \equiv \dfrac{1}{(2\pi h)^{2}} \int_{\R^2 \times \R^2} e^{\frac{i}{h}\Phi(x,\eta)} q(p_0,\eta ; h) \psi_h(x_1)/2\pi ~\dd x_1 \dd x_2 \dd \eta,
\end{split}
\]
where the complex-valued phase is given by:
\[
\Phi(x,\eta) = \phi(p_0,\eta)-x\cdot\eta + \tfrac{i}{2} x_2^2 + x_2 \xi_2.
\]
The stationary point of the phase verifies
\[
\Im \Phi = 0, \partial_{\eta,x} \Phi = 0 \Longleftrightarrow x_2 = 0, x = \partial_\eta \phi(p_0,\eta), \eta = (0,\xi_2).
\]
Note that, by \eqref{equation:calculs}, $\partial_\eta \phi(p_0,(0,\xi_2)) = x = 0$. Consequently, the unique stationary point is equal to $x=0, \eta = (0,\xi_2)$, and the Hessian at this point is given by
\[
\partial^2 \Phi(x,\eta) = \begin{pmatrix}
\partial_\eta^2 \phi & -\mathbbm{1} \\
-\mathbbm{1} & \begin{pmatrix}
0 & 0 \\
0 & i
\end{pmatrix}
\end{pmatrix}.
\]
{\color{black}By expanding the determinant with respect to the third column, one easily sees that the determinant of a block matrix 
\[
\begin{pmatrix} A & -\mathbbm{1} \\ -\mathbbm{1} & \begin{pmatrix}
0 & 0 \\
0 & i
\end{pmatrix} \end{pmatrix}, \qquad A = \begin{pmatrix} a & b \\ c & d \end{pmatrix}
\]
is $1-id$. Note that, by \eqref{equation:calculs}, $\xi_2 \mapsto \phi(p_0,(0,\xi_2))$ is constant. Hence, differentiating twice with respect to $\xi_2$, we find that $\partial_{\eta_2} \partial_{\eta_2} \phi(p_0,(0,\xi_2))=0$, that is $d=0$. Hence, the determinant is $1$.} The (complex) stationary phase lemma (see \cite[Theorem 2.3]{Melin-Sjostrand-75}) then yields:
\[
\begin{split}
h^{1/4} f_{k,m,z}(p_0) &= e^{\frac{i}{h}\phi(p_0,(0,\xi_2))}\dfrac{q(p_0,(0,\xi_2))}{2\pi} + \mc{O}(h)\\
& = e^{\frac{i}{h}\phi(p_0,(0,\xi_2))}\dfrac{q_0(p_0,(0,\xi_2))}{2\pi} + \mc{O}(h).
\end{split}
\]

By \eqref{equation:calculs}, $\xi_2 \mapsto \phi(p_0,(0,\xi_2))$ is constant and, without loss of generality, up to multiplying $T$ by a constant, we can always assume it is $0$. Hence $h^{1/4} f_{k,m,z}(p_0) = q_0(p_0,(0,\xi_2))/2\pi + \mc{O}(h)$. This concludes the proof.

\end{proof}

{\color{black}\begin{lemma}
\label{lemma:linfty}
There exists a constant $C > 0$ such that for all $\chi \in C^\infty_{\mathrm{comp}}(\R), \chi \geq 0$, with support in $(-\eps,\eps)$: 
\[
|\mathbf{u}_{k,m,p_0}| \geq C\left(h^{-1/2} \int_{\R} \chi(t) \dd t - \|\chi\|_{L^\infty}h^{1/2}\right).
\]
\end{lemma}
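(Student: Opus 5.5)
The bound is to be read at the point $p_0$, i.e. for $|\mathbf{u}_{k,m,p_0}(p_0)|$. The plan is to evaluate the definition \eqref{equation:calor} at $p_0$ and insert the pointwise asymptotics of Lemma \ref{lemma:f} under the integral. Since $\mathbf{u}_{k,m,p_0}(p_0) = h^{-1/4}\int_{\R}\chi(\xi_2)\mathbf{f}_{k,m,z(\xi_2)}(p_0)\,\dd \xi_2$, Lemma \ref{lemma:f} gives
\[
\mathbf{u}_{k,m,p_0}(p_0) = h^{-1/2}\left(\int_{\R}\chi(\xi_2)\dfrac{q_0(p_0,(0,\xi_2))}{2\pi}\,\dd\xi_2\right) s^{\otimes k}(p_0) + h^{-1/4}\int_{\R}\chi(\xi_2)\, r_h(\xi_2)\,\dd \xi_2,
\]
where $r_h(\xi_2)=\mc{O}(h^{3/4})$ is the remainder from Lemma \ref{lemma:f}.

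The key point is that the phase normalization made at the end of the proof of Lemma \ref{lemma:f} removes all oscillation. Because $\xi_2\mapsto\phi(p_0,(0,\xi_2))$ is constant by \eqref{equation:calculs}, and $T$ has been multiplied by a constant so that this constant is $0$, the leading terms of all the beams $\mathbf{f}_{k,m,z(\xi_2)}(p_0)$ carry the same phase, namely that of $s^{\otimes k}(p_0)$. With $|s|=1$, $\chi\ge0$ and $q_0$ real, the modulus of the main term is therefore $h^{-1/2}(2\pi)^{-1}\int\chi(\xi_2)q_0(p_0,(0,\xi_2))\,\dd\xi_2$. By Lemma \ref{lemma:calor}(iii), $q_0>0$ near $(p_0,0)$; shrinking $\eps$ if necessary, we may take $q_0(p_0,(0,\xi_2))\ge c_0>0$ for $|\xi_2|<\eps$. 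This bounds the main term from below by $(c_0/2\pi)\,h^{-1/2}\int\chi$.

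For the remainder, we need the $\mc{O}(h^{3/4})$ in Lemma \ref{lemma:f} to be uniform in $\xi_2\in(-\eps,\eps)$. This uniformity is the main point to check. It should follow from the proof: the stationary phase is applied with a symbol and phase depending smoothly on the parameter $\xi_2$, the stationary point $(x,\eta)=(0,(0,\xi_2))$ stays in a compact set, and the Hessian has determinant $1$ for every $\xi_2$. Hence the stationary phase constants are uniform, and so are the $\mc{O}(h^\infty)$ wavefront errors, since they too depend smoothly on $\xi_2$. Granting this, the remainder is bounded by $h^{-1/4}\cdot C h^{3/4}\int|\chi| \le 2\eps C\|\chi\|_{L^\infty}h^{1/2}$.

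By the reverse triangle inequality, $|\mathbf{u}_{k,m,p_0}(p_0)| \ge (c_0/2\pi)h^{-1/2}\int\chi - C'\|\chi\|_{L^\infty}h^{1/2}$. Taking $C=\min(c_0/2\pi,\,1/C')$ puts this in the form stated in Lemma \ref{lemma:linfty}, and the constant is independent of $\chi$. The only delicate step is the uniformity just discussed; everything else is direct.
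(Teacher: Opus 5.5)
Your argument follows the paper's proof. You evaluate \eqref{equation:calor} at $p_0$, insert Lemma~\ref{lemma:f} uniformly in $\xi_2$, and use that the phases $\phi(p_0,(0,\xi_2))$ have been normalized to $0$ and that $q_0\ge c_0>0$, before applying the reverse triangle inequality.

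The one step that fails as written is the choice of constant at the end. Set $a:=c_0/2\pi$, $X:=h^{-1/2}\int\chi$, $Y:=\|\chi\|_{L^\infty}h^{1/2}$. From $|\mathbf{u}_{k,m,p_0}(p_0)|\ge aX-C'Y$ alone you cannot deduce $|\mathbf{u}_{k,m,p_0}(p_0)|\ge C(X-Y)$ with $C=\min(a,1/C')$, or with any other $C>0$ in general. Writing $X=Y+D$ with $D>0$, you would need $(a-C')Y+(a-C)D\ge 0$. Letting $D\to 0$ with $Y>0$ forces $C'\le a$. Such configurations are available by taking $\chi$ a narrow bump of width comparable to $h$. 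Nothing relates the stationary-phase constant $C'$ to $c_0$.

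The fix is already in your computation. Since $\chi\ge 0$, your remainder bound $h^{-1/4}\cdot C''h^{3/4}\int|\chi|$ equals $C''h^{1/2}\int\chi$, so you should not pass to $2\eps\|\chi\|_{L^\infty}$. This gives $|\mathbf{u}_{k,m,p_0}(p_0)|\ge (a-C''h)\,h^{-1/2}\int\chi\ge \tfrac{a}{2}\,h^{-1/2}\int\chi$ for $h\le a/(2C'')$. That is the stated bound with $C=a/2$, and in fact a stronger one; $h$ small is implicit in Lemma~\ref{lemma:f} anyway.

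Alternatively, note that only the two-constant form $|\mathbf{u}_{k,m,p_0}(p_0)|\ge a\,h^{-1/2}\int\chi-C'\|\chi\|_{L^\infty}h^{1/2}$ is used in \eqref{equation:lower-bound}. The paper's own proof passes over this bookkeeping silently, so this is a small repair, not a change of strategy.
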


\begin{proof}
Combining \eqref{equation:calor} and Lemma \ref{lemma:f}, we find:
\[
\begin{split}
\mathbf{u}_{k,m,p_0}(p_0) &= h^{-1/4} \int_{\R} \chi(\xi_2) \mathbf{f}_{k,m,z(\xi_2)}(p_0) \dd \xi_2 \\
& = h^{-1/2} \int_{\R} \chi(\xi_2)  \dfrac{q(p_0,(0,\xi_2))}{2\pi} \dd \xi_2 s^{\otimes k}(p_0) + \mc{O}(\|\chi\|_{L^\infty} h^{1/2}).
\end{split}
\]
Applying the pointwise norm at $p_0$ in the previous equality, and using that $q _0> C$ near $(p_0,0)$ for some constant $C > 0$, yields the result stated in the lemma.
\end{proof}}

\subsubsection{Proof completion}

We can now complete the proof of Theorem \ref{theorem:magneticzonalstates}:

\begin{proof}[Proof of Theorem \ref{theorem:magneticzonalstates}]
{\color{black}Combining Lemmas \ref{lemma:l2} and Lemma \ref{lemma:linfty}, we see that $\mathbf{u}_{k,m,p_0}$ (defined in \eqref{equation:calor}) saturates the Hörmander bound. More precisely, there exists a constant $C > 0$ such that for all $\chi \in C^\infty_{\mathrm{comp}}(\R)$, with $\mathbf{u}_{k,m,p_0}$ given by \eqref{equation:calor}, one has:
\begin{equation}
\label{equation:lower-bound}
    \dfrac{\|\mathbf{u}_{k,m,p_0}\|_{L^{\infty}(\Sigma,L^{\otimes k})}}{\|\mathbf{u}_{k,m,p_0}\|_{L^2(\Sigma,L^{\otimes k})}} \geq C \dfrac{\int_{\R} \chi(t) \dd t + \mc{O}_\chi(h)}{(\int_{\R} \chi(t)^2 \dd t)^{1/2} + \mc{O}_{\chi}(h^{1/2})}h^{-1/2},
\end{equation}
where $\mc{O}_\chi(h)$ means that this term is bounded by $\leq C_\chi h$ for some constant $C_\chi > 0$ depending on $\chi$.}
\end{proof}

\subsection{Semiclassical defect measure of magnetic zonal states}

{\color{black}\subsubsection{Defect measure}

 From now on, to simplify notation, we drop the dependence in $p_0,m$. Recall that $\mathbf{u}_{k}$ was defined in \eqref{equation:calor}. Up to composing by the local diffeomorphism $z : (-\eps,\eps) \to C$, the function $\chi$ in the definition of $\mathbf{u}_k$ can be seen as a function on $C$. Also recall that the coordinates $(t,\theta)$ on $\T^2(p_0,E)$ were introduced in \S\ref{ssection:lagrangian-tori}.

\begin{lemma}
\label{lemma:scl}
Let $b \in C^\infty_{\comp}(D^*\Sigma)$. Then:
\[
\langle \Op_k(b) \mathbf{u}_{k},\mathbf{u}_{k}\rangle_{L^2} = \dfrac{1}{T_E} \int_{0}^{2\pi} \int_0^{T_E} \chi(\theta)^2 b(t,\theta) \dd t \dd \theta + \mc{O}(h^{1/4}).
\]
\end{lemma}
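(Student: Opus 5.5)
We will first use that $\mathbf{u}_k$ lies in the range of $\Pi_{k,m}$, so that $\langle \Op_k(b)\mathbf{u}_k,\mathbf{u}_k\rangle = \langle \Pi_{k,m}\Op_k(b)\Pi_{k,m}\mathbf{u}_k,\mathbf{u}_k\rangle$. Since $E<E_c$, the index $m$ satisfies $m\le (1-\eps)kB$ for some $\eps>0$ and $k$ large. We may therefore apply the averaging identity \eqref{equation:integral}, which gives
\[
\langle \Op_k(b)\mathbf{u}_k,\mathbf{u}_k\rangle = \langle \Op_k(\langle b\rangle)\mathbf{u}_k,\mathbf{u}_k\rangle + \mc{O}(h).
\]
Here we use $\|\mathbf{u}_k\|_{L^2}=\mc{O}(1)$ from Lemma \ref{lemma:l2}. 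The averaged symbol $\langle b\rangle$ is invariant under the $2\pi$-periodic flow $\Phi^a$, and on the torus it is a function of $\theta$ alone. This is because $\Phi^a$ reparametrizes $\Phi$ with constant speed on each energy shell: time $2\pi$ for $\Phi^a$ corresponds to time $T_E$ for $\Phi$. Thus $\langle b\rangle(z(\theta)) = T_E^{-1}\int_0^{T_E} b(t,\theta)\,\dd t + \mc{O}(h)$, where the $\mc{O}(h)$ accounts for $E_k-E=\mc{O}(1/k)$ when $m/k$ is chosen accordingly, or $o(1)$ in general. Using invariance once more, $\langle b\rangle$ restricted near $C$ is determined by its values on $C$. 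So it suffices to show
\[
\langle \Op_k(c)\mathbf{u}_k,\mathbf{u}_k\rangle = \int \chi(\xi_2)^2 c(z(\xi_2))\,\dd \xi_2 + \mc{O}(h^{1/4})
\]
for any symbol $c$ invariant under $\Phi^a$. Since $z'(\xi_2)=\partial_\theta$, the measure $\dd\xi_2$ is exactly $\dd\theta$.

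To prove this reduced identity, we expand as in Lemma \ref{lemma:l2}:
\[
\langle \Op_k(c)\mathbf{u}_k,\mathbf{u}_k\rangle = h^{-1/2}\iint \chi(\xi_2)\chi(\xi_2')\,\langle \Op_k(c)\mathbf{f}_{z(\xi_2)},\mathbf{f}_{z(\xi_2')}\rangle \,\dd\xi_2\,\dd\xi_2'.
\]
Since $c$ is $\Phi^a$-invariant, $\Op_k(c)$ commutes with $\Pi_{k,m}$ up to $\mc{O}(h)$; this follows from Egorov, or again from \eqref{equation:integral}. Hence $\langle \Op_k(c)\mathbf{f}_z,\mathbf{f}_{z'}\rangle = h^{-1/2}\langle \Pi_{k,m}\Op_k(c)\mathbf{e}_z,\mathbf{e}_{z'}\rangle + \mc{O}(h^{1/2})$, keeping track of the normalizations. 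We then write $\Pi_{k,m}$ as the $t$-integral of $e^{itk(A_{1/k}-m/k)}$, localize to $t$ near $0$ as in the proof of \eqref{equation:ffprime}, and conjugate by $T$. By Egorov for the FIO $T$, $T\,\Op_k(c)\,T^{-1}$ is a pseudodifferential operator on $\R^2$ with principal symbol $c\circ K^{-1}$. Its action on the Gaussians $\Psi_{0,\xi_2}$ gives $c(K^{-1}(0,0,0,\xi_2))\Psi_{0,\xi_2}+\mc{O}(h^{1/2})$, the error coming from Taylor expansion at the Gaussian centre. Repeating the Gaussian computation of \eqref{equation:ffprime} then yields
\[
\langle \Op_k(c)\mathbf{f}_{z(\xi_2)},\mathbf{f}_{z(\xi_2')}\rangle = \tfrac{1}{2\sqrt\pi}\,c(z(\xi_2))\,e^{-\frac{(\xi_2-\xi_2')^2}{4h}} + \mc{O}(h^{1/2}) .
\]

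The main obstacle is error bookkeeping. A remainder of size $\mc{O}(h^{1/2})$ in the pairwise scalar product, multiplied by $h^{-1/2}$ and integrated over the $\xi_2,\xi_2'$ square, is not small. To handle this, we keep the Gaussian factor $e^{-(\xi_2-\xi_2')^2/(4h)}$ in the error as well, writing it in the form $\mc{O}(h^{1/2})e^{-(\xi_2-\xi_2')^2/(8h)} + \mc{O}(h^\infty)$. This is possible because both Gaussians are microlocalized within $h^{1/2}$ of their centres, so off-diagonal pairs decouple up to Gaussian decay. The Gaussian integral then contributes a factor $h^{1/2}$, and the total error is $\mc{O}(h^{1/2})$. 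If this uniform off-diagonal decay turns out to be hard to justify, we will fall back on a cruder Cauchy–Schwarz estimate, which is the likely source of the weaker $\mc{O}(h^{1/4})$ in the statement. Finally, the main term is evaluated exactly as in Lemma \ref{lemma:l2}, replacing $\chi(\xi_2)^2$ by $\chi(\xi_2)^2 c(z(\xi_2))$. Substituting $c=\langle b\rangle$ and using the time average computed above gives the stated formula.
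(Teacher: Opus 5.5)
Your architecture is the paper's: expand into pairs $\langle \Op_k(\cdot)\mathbf{f}_{k,z(\xi_2)},\mathbf{f}_{k,z(\xi_2')}\rangle$, write $\Pi_{k,m}$ as a time average localized near $t=0$, conjugate by $T$, apply the coherent-state estimate, and redo the Gaussian integral of Lemma~\ref{lemma:l2}. Your first step, applying \eqref{equation:integral} directly to $\mathbf{u}_k$ with an $\mc{O}(h)$ error, is cleaner than the paper, which applies it pair by pair.

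Step 3, however, does not close as written. After expanding into pairs you must move $\Pi_{k,m}$ across $\Op_k(c)$ again. You bound the resulting error by $\mc{O}(h^{1/2})$ per pair, with no decay in $\xi_2-\xi_2'$. As you note, after the factor $h^{-1/2}$ and integration over the square this is $\mc{O}(1)$. Your justification for off-diagonal decay does not apply to this term. The error is $h^{-1/2}\langle (\Op_k(c)\Pi_{k,m}-\Pi_{k,m}\Op_k(c))\mathbf{e}_{k,z},\Pi_{k,m}\mathbf{e}_{k,z'}\rangle$, and $\Pi_{k,m}\mathbf{e}_{k,z'}$ is a Gaussian beam spread along the whole periodic orbit through $z'$, not a packet localized within $h^{1/2}$ of $z'$. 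A cruder Cauchy--Schwarz bound cannot rescue a per-pair estimate that already fails. For the other error, the action of $T\Op_k(c)T^{-1}$ on the translated Gaussians, decay does hold. It comes in the polynomial form $\mc{O}(h\langle(\xi_2-\xi_2')/\sqrt{h}\rangle^{-N})$ after $t$-integration, via \cite{Combescure-Robert-12}, rather than Gaussian form; $N\geq 2$ suffices.

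The missing observation is the norm bound $\|\Pi_{k,m}\mathbf{e}_{k,z'}\|_{L^2}=h^{1/4}\|\mathbf{f}_{k,z'}\|_{L^2}=\mc{O}(h^{1/4})$, which follows from \eqref{equation:ffprime}. With it, the commutation error per pair is $h^{-1/2}\cdot\mc{O}(h)\cdot\|\mathbf{e}_{k,z}\|\,\|\Pi_{k,m}\mathbf{e}_{k,z'}\|=\mc{O}(h^{3/4})$, not $\mc{O}(h^{1/2})$. The crude integration $h^{-1/2}\iint\chi(\xi_2)\chi(\xi_2')\,\mc{O}(h^{3/4})$ then gives exactly $\mc{O}(h^{1/4})$. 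This is where the paper's exponent comes from: it bounds the pairwise remainder of \eqref{equation:integral} by $Ch\|\Pi_{k,m}\mathbf{e}_{k,z}\|\|\mathbf{e}_{k,z'}\|=\mc{O}(h^{5/4})$.

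Better still, carry your global idea through. Set $\mathbf{w}_k:=\int\chi(\xi_2)\mathbf{e}_{k,z(\xi_2)}\,\dd\xi_2$, so that $\mathbf{u}_k=h^{-1/2}\Pi_{k,m}\mathbf{w}_k$, with $\|\mathbf{w}_k\|=\mc{O}(h^{1/4})$ and $\|\Pi_{k,m}\mathbf{w}_k\|=\mc{O}(h^{1/2})$. Replacing $\Pi_{k,m}\Op_k(c)\Pi_{k,m}$ by $\Op_k(c)\Pi_{k,m}$ at the level of $\mathbf{w}_k$ then costs only $h^{-1}\cdot h\cdot h^{1/2}h^{1/4}=\mc{O}(h^{3/4})$. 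Only the decaying coherent-state errors remain, and you obtain the lemma with $\mc{O}(h^{1/2})$.
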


In particular, Lemma \ref{lemma:scl} shows that the semiclassical limit of $\mathbf{u}_{k}$ is $\chi(\theta)^2 \dd t \dd \theta/T_E$ on $\T^2(p_0,E)$, which proves Theorem \ref{theorem:defect}, item (i). The proof of Lemma \ref{lemma:scl} is based on the following estimate:

\begin{lemma}
For every integer $N > 0$:
\begin{equation}
\label{equation:gaussian}
\begin{split}
\langle \Pi_{k,m} \Op_k(b) \Pi_{k,m} \mathbf{e}_{k,z},\mathbf{e}_{k,z'}\rangle_{L^2}  = \langle b \rangle&(p_0,z) \langle \Pi_{k,m} \mathbf{e}_{k,z}, \mathbf{e}_{k,z'} \rangle_{L^2} \\
&  + \mc{O}(h \langle d(z,z')/\sqrt{h} \rangle^{-N}) + \mc{O}(h^{5/4}).
\end{split}
\end{equation}
\end{lemma}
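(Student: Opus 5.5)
The plan is to reduce to the averaging identity \eqref{equation:integral} and then use the Gaussian localization of the coherent states.

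\textbf{Step 1 (averaging).} By \eqref{equation:integral}, applied with $m=m_k\le(1-\eps)kB$ (allowed since $E<E_c$),
\[
\Pi_{k,m}\Op_k(b)\Pi_{k,m}=\Op_k(\langle b\rangle)\Pi_{k,m}+\mc{O}_{L^2}(h).
\]
Pairing with $\mathbf{e}_{k,z},\mathbf{e}_{k,z'}$ (each of $L^2$ norm $\mc{O}(1)$), the left-hand side of \eqref{equation:gaussian} equals $\langle \Op_k(\langle b\rangle)\Pi_{k,m}\mathbf{e}_{k,z},\mathbf{e}_{k,z'}\rangle+\mc{O}(h)$. A bare $\mc{O}(h)$ is too weak, since the statement asks for $h\langle d(z,z')/\sqrt h\rangle^{-N}$. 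So I would not use \eqref{equation:integral} as a black box. Instead I would repeat its proof (Egorov over one period of the $2\pi$-periodic flow of $a$), keeping the remainder as a twisted pseudodifferential operator $h\Op_k(r)$ with $r$ compactly supported, rather than just an $\mc{O}_{L^2}(h)$ operator. The remainder term is then $h\langle \Op_k(r)\Pi_{k,m}\mathbf{e}_{k,z},\mathbf{e}_{k,z'}\rangle$. This is bounded exactly like the main term below: the Gaussian decay gives the factor $\langle d(z,z')/\sqrt h\rangle^{-N}$.

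\textbf{Step 2 (main term).} Write $\Pi_{k,m}=\frac1{2\pi}\int e^{itk(\mathbf A_k-m/k)}\dd t$ and cut off to $t$ near $0$ as in the proof of \eqref{equation:ffprime}, up to $\mc{O}(h^\infty)$. Then conjugate by $T$ using \eqref{equation:tt}.

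By Egorov for the FIO $T$, $T\Op_k(\langle b\rangle)T^{-1}$ is a pseudodifferential operator on $\R^2$ with principal symbol $\langle b\rangle\circ K^{-1}$, up to $\mc{O}(h)$. Since $\langle b\rangle$ is invariant under the flow of $a$, and that flow is $\partial_{x_1}$ in these coordinates, this symbol is independent of $x_1$. Together with the fact that it commutes with $e^{t\partial_{x_1}}$ modulo $\mc{O}(h)$, this reduces the main term to
\[
\frac1{2\pi}\int\zeta(t)\langle \Op_h(\tilde b)e^{t\partial_{x_1}}\Psi_{0,\xi_2},\Psi_{0,\xi_2'}\rangle\dd t,
\qquad \tilde b=\langle b\rangle\circ K^{-1}.
\]

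\textbf{Step 3 (Taylor expansion).} Expand $\tilde b$ around the point $(x_1,0,0,\xi_2)$ of the $x_1$-line, along which $\tilde b$ is constant equal to $\langle b\rangle(p_0,z)$.

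\emph{Zeroth order.} This term reproduces $\langle b\rangle(p_0,z)\langle\Pi_{k,m}\mathbf e_{k,z},\mathbf e_{k,z'}\rangle$, by the same computation as in the proof of \eqref{equation:ffprime}.

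\emph{First order.} The first-order terms are linear in $(x_2,\xi_1,\xi-\xi_2)$. Against the Gaussians of width $\sqrt h$ they produce $\mc{O}(\sqrt h)$ times Gaussian-weighted quantities, which are bounded by $\mc{O}(\sqrt h\,e^{-c|\xi_2-\xi_2'|^2/h})$. The odd parts vanish when centred at the midpoint, leaving $\mc{O}(h)\langle d/\sqrt h\rangle^{-N}$; the $|\xi_2-\xi_2'|$ factor is absorbed by the Gaussian.

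\emph{Second order and beyond.} These give $h$ times Gaussian factors.

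Quantization corrections contribute $\mc{O}(h)$. The FIO and microlocal cutoff errors (the spectral cutoff of Remark~\ref{remark:cutoff}, the cutoff $\phi$ in \eqref{equation:gaussian-states}) supply the stated $\mc{O}(h^{5/4})$ after the $h^{-1/4}$ normalizations.

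\textbf{Main obstacle.} The hardest step is making the first-order term genuinely $\mc{O}(h)$ rather than $\mc{O}(\sqrt h)$ while keeping the off-diagonal decay. I would first try to symmetrize about the midpoint $(\xi_2+\xi_2')/2$ and use exact Gaussian moment identities to kill the odd contributions. If that fails, I would accept $\mc{O}(\sqrt h\,\langle d/\sqrt h\rangle^{-N})$ in the off-diagonal regime and check whether this still suffices for the $\mc{O}(h^{1/4})$ conclusion of Lemma~\ref{lemma:scl}.
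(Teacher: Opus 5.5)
There is a genuine gap, and it sits exactly at the step you flag as the main obstacle. The remedy you propose there cannot work. After conjugation by $T$ the main term is $\frac{1}{2\pi}\int\zeta(t)\langle \Op_h(\tilde b)\Psi_{(-t,0;0,\xi_2)},\Psi_{(0,0;0,\xi_2')}\rangle\,\dd t$. For \emph{fixed} $t$, the first-order Taylor terms really are of size $h^{1/2}$ off the diagonal. Pairing a linear symbol against coherent states centred at $X\neq X'$ produces a term linear in $X-X'$ times their overlap. Here $X-X'=(-t,0;0,\xi_2-\xi_2')$. Even after centring at the midpoint, you are left with a linear combination of $\partial_{x_2}\tilde b\,(\xi_2-\xi_2')$ and $\partial_{\xi_1}\tilde b\,t$, multiplied by $\langle\Psi_{(-t,0;0,\xi_2)},\Psi_{(0,0;0,\xi_2')}\rangle$. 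Flow invariance of $\langle b\rangle$ only kills $\partial_{x_1}\tilde b$, so no parity argument removes these terms.

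The missing $h^{1/2}$ comes from the $t$-integral instead. Since $e^{t\partial_{x_1}}$ displaces the Gaussian by $t$, the standard coherent-state estimate (e.g.\ Combescure--Robert) gives, at each $t$, the main term $\tilde b(-t,0;0,\xi_2)\langle\Psi_{(-t,0;0,\xi_2)},\Psi_{(0,0;0,\xi_2')}\rangle$. Here $\tilde b(-t,0;0,\xi_2)=\langle b\rangle(p_0,z)$ by invariance. The error is $\mc{O}\big(h^{1/2}\langle (t,\xi_2-\xi_2')/\sqrt h\rangle^{-N-1}\big)$, which also decays in $t/\sqrt h$. Integrating in $t$,
\[
\int\zeta(t)\,h^{1/2}\Big(1+\frac{t^2+(\xi_2-\xi_2')^2}{h}\Big)^{-(N+1)/2}\dd t\le C\,h\,\big\langle(\xi_2-\xi_2')/\sqrt h\big\rangle^{-N}.
\]
This is the paper's entire argument for the main term; no symmetrization is needed.

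Your fallback $\mc{O}(h^{1/2}\langle d/\sqrt h\rangle^{-N})$ would not suffice. Inserted in \eqref{equation:opkb} and integrated against $\chi(\xi_2)\chi(\xi_2')$, it yields an $\mc{O}(1)$ error in Lemma~\ref{lemma:scl}, as large as the main term. The same problem affects the bare $\mc{O}(h)$ errors you accept for ``quantization corrections'' and for commuting with $e^{t\partial_{x_1}}$. Without off-diagonal decay they are not of the form $\mc{O}(h\langle d/\sqrt h\rangle^{-N})+\mc{O}(h^{5/4})$, and they would also give an $\mc{O}(1)$ error downstream. They must be run through the coherent-state bound and then the $t$-integral.

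Your Step 1 detour is unnecessary, and you misattribute the $\mc{O}(h^{5/4})$ term. The FIO and cutoff errors are $\mc{O}(h^\infty)$, and there is no $h^{-1/4}$ normalization in this lemma. Since $\Pi_{k,m}^2=\Pi_{k,m}$, the remainder of \eqref{equation:integral} can be written $hR_h\Pi_{k,m}$ with $R_h$ bounded on $L^2$. By \eqref{equation:ffprime}, $\|\Pi_{k,m}\mathbf{e}_{k,z}\|_{L^2}=h^{1/4}\|\mathbf{f}_{k,m,z}\|_{L^2}=\mc{O}(h^{1/4})$. That gives exactly the $\mc{O}(h^{5/4})$ term in the statement. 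Keeping the remainder pseudodifferential, as you suggest, would also work once combined with the $t$-integration. However, it forces you to reopen Egorov's theorem and handle the spectral cutoff in $\mathbf{A}_k$, for no gain.
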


Here, for $x \in \R$, $\langle x\rangle := (1+x^2)^{1/2}$ denotes the Japanese bracket and $d(z,z')$ is the distance on $C$ between $z$ and $z'$.

\begin{proof}
In \eqref{equation:integral}, we can write the remainder term $h R_h$ where $R_h$ is uniformly bounded on $L^2$. Note that, in \eqref{equation:integral}, we may also freely compose on the right by $\Pi_{k,m}$ and thus change $h R_h$ by $h R_h \Pi_{k,m}$. Consequently, in \eqref{equation:gaussian}, the remainder term of \eqref{equation:integral} yields
\[
|\langle h R_h \Pi_{k,m} \mathbf{e}_{k,z},\mathbf{e}_{k,z'}\rangle_{L^2}| \leq C h \|\Pi_{k,m} \mathbf{e}_{k,z}\|_{L^2}\|\mathbf{e}_{k,z'}\|_{L^2} = \mc{O}(h^{5/4})
\]
by \eqref{equation:ffprime}. Using \eqref{equation:integral}, it thus suffices to prove that
\[
\langle \Op_k(\langle b \rangle) \Pi_{k,m} \mathbf{e}_{k,z},\mathbf{e}_{k,z'}\rangle_{L^2} = \langle b \rangle(p_0,z) \langle \Pi_{k,m} \mathbf{e}_{k,z}, \mathbf{e}_{k,z'} \rangle_{L^2} + \mc{O}(h \langle d(z,z')/\sqrt{h} \rangle^{-N}).
\]
Conjugating by the operator $T$, and using \eqref{equation:tt}, yields
\begin{equation}
\label{equation:lessive}
\begin{split}
\langle \Op_k(\langle b \rangle) \Pi_{k,m} \mathbf{e}_{k,z},\mathbf{e}_{k,z'}\rangle_{L^2} & = \langle T^{-1} (T \Op_k(\langle b \rangle) T^{-1})(T \Pi_{k,m} T^{-1}) \Psi_{(0,\xi_2)}, T^{-1} \Psi_{0,\xi_2'}\rangle_{L^2}  \\
& = \dfrac{1}{2\pi} \int_{\R} \langle  \zeta(t) T \Op_k(\langle b \rangle) T^{-1}  e^{t\partial_{x_1}} \Psi_{(0,\xi_2)} , \Psi_{0,\xi_2'}\rangle_{L^2} \dd t,
\end{split}
\end{equation}
where $z =z(\xi_2), z' = z(\xi_2')$. Note that $e^{t\partial_{x_1}} \Psi_{(0,\xi_2)} = \Psi_{(-t,0;0,\xi_2)}$ (the Gaussian state centered at $x_1=-t,x_2=0$), and $T \Op_k(\langle b \rangle) T^{-1} = \Op_h(p_0 + hp_1+...)$ is a semiclassical pseudodifferential operator in $\R^2$ with principal symbol satisfying $p_0 \circ \kappa = \langle b \rangle$. We then use the standard fact that, for every integer $N > 0$,
\[
\langle \Op_h(a) \Psi_{x,\xi}, \Psi_{x',\xi'} \rangle_{L^2} = a(x,\xi) \langle \Psi_{x,\xi}, \Psi_{x',\xi'} \rangle_{L^2} + \mc{O}(h^{1/2}\langle\|(x,\xi)-(x',\xi')\|/\sqrt{h}\rangle^{-N}).
\]
See \cite[Chapter 2, Theorem 6]{Combescure-Robert-12} for instance. Inserting this estimate applied with $N+1$ in \eqref{equation:lessive}, we find that 
\[
\begin{split}
\left|\langle \Op_k(\langle b \rangle) \Pi_{k,m} \mathbf{e}_{k,z},\mathbf{e}_{k,z'}\rangle_{L^2} \right. &\left.- \langle b \rangle(p_0,z) \langle \Pi_{k,m} \mathbf{e}_{k,z}, \mathbf{e}_{k,z'} \rangle_{L^2}\right| \\
& \leq C \int_{\R} \dfrac{\zeta(t) h^{1/2}}{\left(1+ \tfrac{t^2+(\xi_2-\xi_2')^2}{h}\right)^{(N+1)/2}}\dd t \leq C h \langle (\xi_2-\xi_2')/\sqrt{h} \rangle^{-N}.
\end{split}
\]
This proves the claim.
\end{proof}

\begin{proof}[Proof of Lemma \ref{lemma:scl}]
Let $b \in C^\infty_{\comp}(D^*\Sigma)$. First, note that $\mathbf{u}_k$ is microlocally supported on the energy shell $\{p=E\}$ (it is $\mc{O}_{L^2}(h^{\infty})$ outside). Additionally, observe that for $z,z' \in C$, for every $N > 0$:
\begin{equation}
\label{equation:opkb}
\begin{split}
\langle \Op_k(b) \mathbf{f}_{k,z}&, \mathbf{f}_{k,z'} \rangle_{L^2} \\
&  \overset{\eqref{equation:gaussian-beams}}{=}  h^{-1/2}\langle \Pi_{k,m_k} \Op_k(b) \Pi_{k,m_k} \mathbf{e}_{k,z}, \mathbf{e}_{k,z'}\rangle_{L^2} \\
& \overset{\eqref{equation:gaussian}}{=} h^{-1/2}\left(\langle b\rangle(p_0,z) \langle \Pi_{k,m_k} \mathbf{e}_{k,z}, \mathbf{e}_{k,z'}\rangle_{L^2} +\mc{O}(h \langle (\xi_2-\xi_2')/\sqrt{h} \rangle^{-N}+h^{5/4})\right) \\
& \overset{\eqref{equation:gaussian-beams}}{=} \langle b\rangle(p_0,z) \langle \mathbf{f}_{k,z}, \mathbf{f}_{k,z'}\rangle_{L^2}  +\mc{O}(h^{1/2} \langle (\xi_2-\xi_2')/\sqrt{h} \rangle^{-N}+h^{3/4}).
\end{split}
\end{equation}
We then have:
\[
\begin{split}
& h^{1/2}\langle \Op_k(b) \mathbf{u}_{k},\mathbf{u}_{k}\rangle_{L^2} \\
& \overset{\eqref{equation:calor}}{=}  \int_{\R \times \R} \chi(\xi_2) \chi(\xi_2') \langle \Op_k(b) \mathbf{f}_{k,z(\xi_2)}, \mathbf{f}_{k,z(\xi_2')}\rangle_{L^2} \dd \xi_2 \dd \xi_2' \\
& \overset{\eqref{equation:opkb}}{=} \int_{\R \times \R} \chi(\xi_2) \chi(\xi_2') \left(\langle b\rangle(p_0,z(\xi_2)) \langle \mathbf{f}_{k,z(\xi_2)}, \mathbf{f}_{k,z(\xi_2')} \rangle_{L^2}  + \mc{O}(h^{1/2} \langle (\xi_2-\xi_2')/\sqrt{h} \rangle^{-N}+h^{3/4})\right)\dd \xi_2 \dd \xi_2' \\
& \overset{\eqref{equation:ffprime}}{=} \int_{\R \times \R} \chi(\xi_2) \chi(\xi_2') \langle b\rangle(p_0,z(\xi_2)) \dfrac{1}{2\sqrt{\pi}} e^{-\frac{1}{4h} (\xi_2 - \xi_2')^2}\dd \xi_2 \dd \xi_2' + \mc{O}(h) + \mc{O}(h^{3/4}),
\end{split}
\]
provided $N \geq 2$. As in the proof of Lemma \ref{lemma:l2}, we find that
\[
\int_{\R \times \R} \chi(\xi_2) \chi(\xi_2') \langle b\rangle(p_0,z(\xi_2)) e^{-\frac{1}{4h} (\xi_2 - \xi_2')^2}\dd \xi_2 \dd \xi_2' = 2\sqrt{\pi h} \int_{\R} \chi(\xi_2)^2 \langle b\rangle(p_0,z(\xi_2))  \dd \xi_2  + \mc{O}(h^{3/2}),
\]
Combining the above computations, we obtain
\[
\begin{split}
\langle \Op_k(b) \mathbf{u}_{k},\mathbf{u}_{k}\rangle_{L^2} & = \int_{\R} \chi(\xi_2)^2 \langle b\rangle(p_0,z(\xi_2))  \dd \xi_2 + \mc{O}(h^{1/4}) \\
& = \int_{C} \tilde{\chi}(z)^2 \langle b\rangle(p_0,z)  \dd z + \mc{O}(h^{1/4}),
\end{split}
\]
where $\tilde{\chi} = \chi \circ z^{-1}$.
\end{proof}

\subsubsection{Magnetic zonal states with full support on the Lagrangian torus}

\label{sssection:modif}

Let $\mc{R} \subset C(p_0,E)$ denote the set of points $z \in C(p_0,E)$ such that there exists a time $t \in (0,T_E)$ such that $\Phi_t(z) \in C(p_0,E)$. Observe that it follows from the proof of Proposition \ref{proposition:pushforward} that this set is finite (it corresponds in the universal cover to a trajectory passing through a copy $\gamma.i$ of $i$ for $\gamma \in \Gamma$).

The circle $C(p_0,E)$ can be covered by a finite number $N$ of open subsets $(U_i)_{1 \leq i \leq N}$ as introduced in Lemma \ref{lemma:normal}. Let $J_i := C \cap U_i$; up to shrinking $U_i$, the $J_i$'s can be chosen to be intervals on $C$. For $n \geq 0$, we choose on each interval $J_i$ a nonnegative bump function $\chi_n^{(i)} \in C^\infty_{\mathrm{comp}}(J_i)$, $0 \leq \chi_n^{(i)} \leq 1$, with support in $J_i \setminus \mc{R}$ such that $\chi_n^{(i)}$ and $\chi_n^{(j)}$ have disjoint support whenever $i \neq j$. Let
\[
\Lambda_i := \bigcup_{z \in \supp(\chi_n^{(i)})} \{\Phi_t(z) ~:~0 \leq t \leq T_E\}.
\]
Observe that, by construction:
\begin{equation}
\label{equation:faim}
\Lambda_i \cap \Lambda_j = \emptyset, \qquad \forall i \neq j.
\end{equation}

We let $\chi_n := \sum_{i=1}^N \chi_n^{(i)}$. In addition, as $\mc{R}$ is discrete, these functions can be constructed in such a way that $\chi_n^2(\theta) \dd \theta \to_{n \to +\infty} \dd\theta$ in the weak-$*$ topology. Therefore, $\chi_n(\theta)^2 \dd t \dd\theta  \to  \dd t \dd\theta$ on $\T^2(p_0,E)$ in the weak-$*$ topology. For each $1 \leq i \leq N$, we can construct a zonal state $\mathbf{u}_{k,n}^{(i)}$ with corresponding cutoff function $\chi_n^{(i)}$ as in \eqref{equation:calor}. We then define the magnetic zonal state:
\[
\mathbf{u}_{k,n} := \sum_{i=1}^N \mathbf{u}_{k,n}^{(i)}.
\]
The following holds:

\begin{lemma}
\label{lemma:pleurs}
There exists a constant $C > 0$ such that:
\[
\|\mathbf{u}_{k,n}\|_{L^\infty} \geq C h^{-1/2} \int_C \chi_n(\theta) \dd \theta + \mc{O}_n(h^{1/2}), \qquad \|\mathbf{u}_{k,n}\|^2_{L^2} = \|\chi_n\|^2_{L^2(C)} + \mc{O}_n(h).
\]
\end{lemma}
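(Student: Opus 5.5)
The plan is to reduce both estimates to the single-chart statements already proved for each $\mathbf{u}_{k,n}^{(i)}$, namely Lemma \ref{lemma:linfty} and Lemma \ref{lemma:l2}, and then control the interactions between different charts.

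\emph{$L^\infty$ lower bound.} We evaluate at $p_0$. By Lemma \ref{lemma:f} applied in each chart $U_i$, we have
\[
\mathbf{u}^{(i)}_{k,n}(p_0) = h^{-1/2}\Big(\int \chi_n^{(i)}(\xi_2)\, q_0^{(i)}(p_0,(0,\xi_2))\,\dd \xi_2/2\pi\Big)\, e^{i\varphi_i/h}\, s^{\otimes k}(p_0) + \mc{O}_n(h^{1/2}),
\]
where $\varphi_i$ is the constant value of $\xi_2 \mapsto \phi^{(i)}(p_0,(0,\xi_2))$. The key point is to avoid destructive interference between charts. As in the proof of Lemma \ref{lemma:f}, multiplying each FIO $T^{(i)}$ by a unimodular constant $e^{-i\varphi_i/h}$ sets all phases to $0$. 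This rescaling does not affect unitarity, the conjugation \eqref{equation:tt}, or Lemma \ref{lemma:l2}. All the contributions are then positive multiples of $s^{\otimes k}(p_0)$. Since $q_0^{(i)} \geq c>0$ on the relevant supports (shrinking the $U_i$ if necessary, with finitely many charts), we get
\[
|\mathbf{u}_{k,n}(p_0)| \geq c\, h^{-1/2}\sum_i\int \chi_n^{(i)} = C h^{-1/2}\int_C \chi_n\,\dd\theta + \mc{O}_n(h^{1/2}).
\]
Here each chart's coordinate $\xi_2$ is the arclength $\theta$ by Lemma \ref{lemma:normal}(iii).

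\emph{$L^2$ norm.} We expand
\[
\|\mathbf{u}_{k,n}\|^2=\sum_i\|\mathbf{u}^{(i)}_{k,n}\|^2+\sum_{i\ne j}\langle \mathbf{u}^{(i)}_{k,n},\mathbf{u}^{(j)}_{k,n}\rangle.
\]
The diagonal terms give $\sum_i \|\chi_n^{(i)}\|^2_{L^2}+\mc{O}_n(h)=\|\chi_n\|^2_{L^2(C)}+\mc{O}_n(h)$ by Lemma \ref{lemma:l2}, using the disjointness of the supports. For the cross terms, we use the fact that $\mathbf{u}^{(i)}_{k,n}=h^{-1/2}\int\chi^{(i)}_n\,\Pi_{k,m}\mathbf{e}^{(i)}_{k,z}$. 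Each inner product $\langle \Pi_{k,m}\mathbf{e}^{(i)}_{k,z},\mathbf{e}^{(j)}_{k,z'}\rangle$ is the average over $t\in[0,2\pi]$ of $\langle e^{itk(\mathbf{A}_k-m/k)}\mathbf{e}^{(i)}_{k,z},\mathbf{e}^{(j)}_{k,z'}\rangle$. The evolved packet is microlocalized, with Gaussian decay on scale $h^{1/2}$, near $\Phi^a_t(z)\in\Lambda_i$, while $\mathbf{e}^{(j)}_{k,z'}$ is localized near $z'\in\Lambda_j$. Since $\Lambda_i\cap\Lambda_j=\emptyset$ by \eqref{equation:faim}, and these are compact sets, they lie at a positive distance $d_n$ apart. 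By Egorov and the wavefront set arguments already used in the proof of \eqref{equation:ffprime}, each such pairing is $\mc{O}_n(h^\infty)$ uniformly in $t,z,z'$. After the $h^{-1/2}$ normalizations and integration over the compact supports, the cross terms are $\mc{O}_n(h^\infty)$.

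\emph{Main obstacle.} The delicate step is making this microlocal separation uniform over all $t\in[0,2\pi]$, since $\mathbf{A}_k$ is not globally a pseudodifferential operator. Here I would invoke Remark \ref{remark:cutoff}: all states have wavefront set in a compact subset of $D^*\Sigma$ near $\{p=E\}$, so Egorov applies to the cut-off propagator. I would also check that the excision of $\mc{R}$ is what guarantees a flow line from $\supp\chi_n^{(i)}$ never returns to $C$ inside $\supp\chi_n^{(j)}$. This is exactly the content of \eqref{equation:faim}.
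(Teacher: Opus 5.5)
Your proposal is correct and follows the paper's argument. The $L^\infty$ bound comes from Lemma~\ref{lemma:linfty} (via Lemma~\ref{lemma:f}) once the constant phases of the Fourier integral operators in the different charts are normalized, so that all contributions at $p_0$ are positive multiples of $s^{\otimes k}(p_0)$; the $L^2$ identity comes from Lemma~\ref{lemma:l2} plus $\mc{O}_n(h^\infty)$ cross terms from the disjoint microsupports given by \eqref{equation:faim}, and your packet-by-packet, averaged-over-$t$ treatment of those cross terms simply spells out the paper's one-line disjoint-microsupport remark.
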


\begin{proof}
The first estimate follows immediately from Lemma \ref{lemma:linfty} (note that the phases of the $\mathbf{u}_{k,n}^{(i)}$ at $p_0$ can all be chosen equal at this point). The second one is a consequence of Lemma \ref{lemma:l2} combined with the fact that $\langle \mathbf{u}_{k,n}^{(i)},\mathbf{u}_{k,n}^{(j)}\rangle_{L^2} = \mc{O}_n(h^\infty)$ as these states have disjoint microsupport by \eqref{equation:faim}.
\end{proof}

In addition, the following holds:

\begin{lemma}
\label{lemma:pleurs2}
Let $b \in C^\infty_{\comp}(D^*\Sigma)$. Then:
\[
\langle \Op_k(b) \mathbf{u}_{k,n},\mathbf{u}_{k,n}\rangle_{L^2} = \dfrac{1}{T_E} \int_{0}^{2\pi} \int_0^{T_E} \chi_n(\theta)^2 b(t,\theta) \dd t \dd \theta + \mc{O}_n(h^{1/4}).
\]
\end{lemma}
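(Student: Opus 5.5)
We will reduce Lemma \ref{lemma:pleurs2} to Lemma \ref{lemma:scl} by expanding the quadratic form along the decomposition $\mathbf{u}_{k,n} = \sum_{i=1}^N \mathbf{u}_{k,n}^{(i)}$:
\[
\langle \Op_k(b) \mathbf{u}_{k,n},\mathbf{u}_{k,n}\rangle_{L^2} = \sum_{i=1}^N \langle \Op_k(b) \mathbf{u}^{(i)}_{k,n},\mathbf{u}^{(i)}_{k,n}\rangle_{L^2} + \sum_{i \neq j} \langle \Op_k(b) \mathbf{u}^{(i)}_{k,n},\mathbf{u}^{(j)}_{k,n}\rangle_{L^2}.
\]

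For the diagonal terms, each $\mathbf{u}^{(i)}_{k,n}$ is a magnetic zonal state of the form \eqref{equation:calor}. It is built in the chart $U_i$ of Lemma \ref{lemma:normal}, with cutoff $\chi^{(i)}_n$ supported in $J_i$. Lemma \ref{lemma:scl} then applies directly and gives
\[
\langle \Op_k(b) \mathbf{u}^{(i)}_{k,n},\mathbf{u}^{(i)}_{k,n}\rangle_{L^2} = T_E^{-1}\int_0^{2\pi}\int_0^{T_E} \chi^{(i)}_n(\theta)^2 b(t,\theta)\,\dd t\,\dd\theta + \mc{O}_n(h^{1/4}).
\]
The constants depend on $n$ through the derivatives of $\chi^{(i)}_n$ and through the chart. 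Since the $\chi^{(i)}_n$ have pairwise disjoint supports, $\sum_i (\chi^{(i)}_n)^2 = \chi_n^2$. Summing over $i$ therefore yields the main term of the statement, with an error $\mc{O}_n(h^{1/4})$ because $N$ is fixed.

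For the off-diagonal terms, we show that $\langle \Op_k(b) \mathbf{u}^{(i)}_{k,n},\mathbf{u}^{(j)}_{k,n}\rangle = \mc{O}_n(h^\infty)$ for $i\neq j$. The plan is a wavefront set argument, as in the proof of Lemma \ref{lemma:pleurs}. First, each Gaussian beam $\mathbf{f}_{k,m,z}$ is microlocalized on the closed orbit $\{\Phi_t(z) : t\in[0,T_E]\}$. This follows from writing $\Pi_{k,m}$ as the time-average of $e^{itk(\mathbf{A}_k-m/k)}$ over a period and applying Egorov's theorem, which gives $\WF_k(\mathbf{u}^{(i)}_{k,n}) \subset \Lambda_i$. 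Second, $\Op_k(b)$ is pseudodifferential, so it does not enlarge wavefront sets. Third, \eqref{equation:faim} says that $\Lambda_i$ and $\Lambda_j$ are disjoint compact sets. We then choose $c_i \in C^\infty_{\comp}(T^*\Sigma)$ equal to $1$ near $\Lambda_i$ and vanishing near $\Lambda_j$. This gives $\mathbf{u}^{(i)} = \Op_k(c_i)\mathbf{u}^{(i)} + \mc{O}(h^\infty)$ and $\Op_k(c_j)^*\Op_k(b)\Op_k(c_i) = \mc{O}(h^\infty)$, since the symbols have disjoint supports. The cross terms are therefore negligible.

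The main obstacle is making the microlocalization claim $\WF_k(\mathbf{u}^{(i)}_{k,n})\subset\Lambda_i$ uniform in the integration parameter $\xi_2$, with quantitative $\mc{O}(h^\infty)$ bounds that survive the factor $h^{-1/4}$ in \eqref{equation:calor} and the factor $h^{-1/4}$ in \eqref{equation:gaussian-beams}. We will handle this concretely in two steps. For each fixed $\xi_2$, the coherent state $\mathbf{e}_{k,z(\xi_2)}$ is $\mc{O}(h^\infty)$ outside any fixed neighborhood of $z(\xi_2)$, uniformly in $\xi_2$, and $e^{itk(\mathbf{A}_k-m/k)}$ transports it along $\Phi^a_t$ with uniform control for $t\in[0,2\pi]$. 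The polynomial prefactors are then absorbed into $\mc{O}(h^\infty)$.

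As an alternative that avoids wavefront sets, one could compute the cross term directly through \eqref{equation:gaussian}. The averaged pairing $\langle \Pi_{k,m}\mathbf{e}_{k,z},\mathbf{e}_{k,z'}\rangle$ decays like $\mc{O}(h^\infty)$ when $z$ and $z'$ lie in disjoint supports that are not connected by the flow, which is exactly the role of removing $\mc{R}$.
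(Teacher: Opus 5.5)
Your proposal is correct and follows the paper's argument: apply Lemma \ref{lemma:scl} to each $\mathbf{u}^{(i)}_{k,n}$, use $\sum_i (\chi^{(i)}_n)^2=\chi_n^2$, and discard the cross terms because the $\mathbf{u}^{(i)}_{k,n}$ are microlocalized on the pairwise disjoint compact sets $\Lambda_i$ of \eqref{equation:faim}. You have spelled out details the paper only states in one line; one small fix is to take $c_i$ and $c_j$ supported in disjoint neighborhoods of $\Lambda_i$ and $\Lambda_j$, not merely vanishing near the other set, so that the composed symbol really vanishes.
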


\begin{proof}
Same proof as Lemma \ref{lemma:scl} using that the $\mathbf{u}_{k,n}^{(i)}$'s have disjoint microsupport.
\end{proof}

We can now complete the proof of Theorem \ref{theorem:defect}, item (ii).

\begin{proof}[Proof of Theorem \ref{theorem:defect}, item (ii)]
The proof is based on a standard diagonal argument that we briefly recall. Let $(a_\ell)_{\ell \geq 0} \subset C^\infty_{\comp}(T^*\Sigma)$ be a countable set which is dense in the space $C_{\comp}(T^*\Sigma)$ of compactly supported continuous functions (equipped with the sup norm). Fix $j \geq 0$. We may then choose $n := n(j) \geq 0$ large enough such that for all $0 \leq \ell \leq j$:
\[
\left|\int_{\T^2(p_0,E)} \chi_n(\theta)^2 a_\ell(t,\theta) \dd t \dd \theta - \int_{\T^2(p_0,E)} a_\ell(t,\theta) \dd t \dd \theta\right| \leq 1/j.
\]
Then, for all $k \geq k(j)$ large enough, using Lemmas \ref{lemma:pleurs} and \ref{lemma:pleurs2}, we obtain that for all $0 \leq \ell \leq j$:
\[
\dfrac{\|\mathbf{u}_{k,n}\|_{L^\infty}}{\|\mathbf{u}_{k,n}\|_{L^2}} \geq C k^{1/2}, \quad \left|\langle \Op_k(a_\ell) \mathbf{u}_{k,n},\mathbf{u}_{k,n}\rangle_{L^2}  - \dfrac{1}{T_E}\int_{\T^2(p_0,E)} a_\ell(t,\theta) \dd t \dd \theta\right| \leq C/j,
\]
and $\|\mathbf{u}_{k,n}\|^2_{L^2} = 2\pi + \mc{O}(1/j)$, for some uniform constant $C > 0$.  Therefore, the normalized sequence of eigenstates $u_{k(j)} := \mathbf{u}_{k(j),n(j)}/\|\mathbf{u}_{k(j),n(j)}\|_{L^2}$ saturates Hörmander's bound and, by construction, the semiclassical limit of $(u_{k(j)})_{j \geq 0}$ is the normalized Lebesgue measure on $\T^2(p_0,E)$.
\end{proof}}

\subsubsection{Absence of saturation elsewhere}

To complete our study of magnetic zonal states, we prove that magnetic zonal states only saturate the Hörmander bound at $p=p_0$:

\begin{lemma}
\label{lemma:not-saturated}
Let $\mathbf{u}_k$ be the eigenfunction defined in \eqref{equation:calor}. Let $p \in \Sigma, p \neq p_0$. Then, $|\mathbf{u}_{k}(p)| = o(h^{-1/2})$.
\end{lemma}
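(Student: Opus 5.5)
Fix $p \neq p_0$. By \eqref{equation:calor}, $\mathbf{u}_k(p) = h^{-1/4}\int \chi(\xi_2)\mathbf{f}_{k,m,z(\xi_2)}(p)\,\dd \xi_2$, and each Gaussian beam satisfies $\mathbf{f}_{k,m,z} = h^{-1/4}\cdot\frac{1}{2\pi}\int_0^{2\pi} e^{itk(\mathbf{A}_k-m/k)}\mathbf{e}_{k,m,z}\,\dd t$. The strategy is to show that this integral is $o(h^{-1/2})$ by analysing $\mathbf{u}_k$ microlocally near $\pi^{-1}(p)$. The idea is that, away from $x_0=p_0$, the torus $\T^2(p_0,E)$ projects with at worst fold singularities, so the oscillatory integral defining $\mathbf{u}_k(p)$ has at least one nondegenerate direction of stationary phase more than at $p_0$.

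First I localize in time. Using a partition of unity in $t \in [0,2\pi)$ and the fact that $e^{itk(\mathbf{A}_k-m/k)}\mathbf{e}_{k,m,z}$ is a Gaussian state microlocalized at $\Phi^a_t(z)$ with width $h^{1/2}$, only times $t$ for which $\pi(\Phi^a_t(z(\xi_2))) $ lies within $h^{1/2-}$ of $p$ contribute, up to $\mc{O}(h^\infty)$. By Lemma \ref{lemma:nmber-bounded}, the set of $(\theta,t)$ with $\Psi(\theta,t)=p$ is finite, with at most $N_E$ points. Near each such point $(\theta_j,t_j)$, I use Egorov and FIO conjugation, replacing $T$ by a Fourier integral operator adapted to a neighbourhood of $\Phi_{t_j}(z(\theta_j))$ (with Lemma \ref{lemma:normal} applied at that point). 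This writes the contribution of a neighbourhood of $(\theta_j,t_j)$ as
\[
h^{-1/2}\,\frac{1}{(2\pi h)^{2}}\int e^{\frac{i}{h}\Phi_j(x,\eta)}\, \tilde q(x,\eta,\xi_2,t)\,\chi(\xi_2)\,\dd x\, \dd\eta\, \dd t\, \dd \xi_2,
\]
exactly as in the proof of Lemma \ref{lemma:f}, but now without integrating out $\xi_2$ trivially. In the parametrization $(\theta,t)\mapsto$ torus, the total phase as a function of the torus parameters is, after eliminating the Gaussian variables by complex stationary phase as in Lemma \ref{lemma:f}, a Lagrangian phase generating $\T^2(p_0,E)$ near the corresponding points.

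Next I count powers. Integrating out $(x,\eta)$ by the complex stationary phase lemma (\cite{Melin-Sjostrand-75}) exactly as before leaves, up to $\mc{O}(h)$ relative error, an expression of the form $h^{-1/2}\int e^{\frac{i}{h}\varphi_j(\theta)} c_j(\theta)\chi(\theta)\,\dd\theta$. Here I have already used stationary phase in $t$, which is nondegenerate because $\partial_t\Psi=\eta^\sharp\neq 0$. The key point is the behaviour of $\partial_\theta\varphi_j$. At $p_0$ the analogue of this phase was constant in $\xi_2$, by \eqref{equation:calculs}, which produced the full $h^{-1/2}$. At $p\neq p_0$ the point $(\theta_j,t_j)$ has $\partial_\theta\Psi = a(t_j)\eta^\sharp + b(t_j)R_{\pi/2}\eta^\sharp \neq 0$ by \eqref{equation:b}, since $b$ and $a$ do not vanish simultaneously for $t\in(0,T_E)$. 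Hence the critical point of $\varphi_j$ in $\theta$ is either nondegenerate, when $t_j\neq T_E/2$ (a regular point of $\Psi$), giving $\mc{O}(h^{-1/2}\cdot h^{1/2})=\mc{O}(1)$, or a fold, when $t_j=T_E/2$ and $p\in\gamma_E$, with cubic degeneracy, giving $\mc{O}(h^{-1/2}h^{1/3})=\mc{O}(h^{-1/6})$. In either case the contribution is $o(h^{-1/2})$. By Lemma \ref{lemma:nmber-bounded} there are finitely many contributions, and $p\neq p_0$ excludes $t_j\in\{0,T_E\}$, which would correspond to the maximally degenerate point. The cubic nondegeneracy at the fold follows from $\phi''(\pi/\gamma)\neq 0$ in Lemma \ref{lemma:calculs}.

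The main obstacle is to rigorously identify the reduced phase $\varphi_j$ with the generating function of $\T^2(p_0,E)$ and to show that its $\theta$-Hessian degenerates exactly when $d\Psi$ does. If the fold analysis is delicate, a cruder route suffices: any critical point of finite order, or merely the fact that $\partial_\theta\varphi_j$ does not vanish identically near $\theta_j$, gives $o(h^{-1/2})$. This is because the set where $|\partial_\theta\varphi_j|\le h^{1/2-\delta}$ has measure $o(1)$, and I would integrate by parts on its complement. This non-vanishing follows from $\partial_\theta\Psi\neq0$ away from $p_0$.
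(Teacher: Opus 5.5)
Your argument is correct in outline, but it takes a genuinely different route from the paper's. The paper never analyses $\mathbf{u}_k(p)$ directly. It invokes \cite[Theorem 4]{Galkowski-Toth-18}, asserted without proof to extend to the magnetic Laplacian, which gives $o(h^{-1/2})$ at $p$ as soon as the support of the defect measure meets the flowout $\Lambda_{p,T}$ of $C(p,E)$ in a set of Hausdorff dimension $<2$. By Theorem~\ref{theorem:defect} that support lies in $\T^2(p_0,E)$. Its intersection with $\Lambda_{p,T}$ is the finite union of orbits through the finitely many $\theta$ whose projected orbit passes through $p$. That route is short and soft, but it gives no rate and rests on a black box.

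You instead treat $\mathbf{u}_k$ as a Lagrangian state on $\T^2(p_0,E)$ and count powers of $h$ at each preimage. This is self-contained and quantitative: $\mathcal{O}(1)$ off $\{p_0\}\cup\gamma_E$, and $\mathcal{O}(h^{-1/6})$ on $\gamma_E$ once the fold analysis is done uniformly as $E_k\to E$. The cost is redoing the FIO and stationary-phase bookkeeping near every preimage.

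The step you flag as the main obstacle becomes easy if you integrate in a different order. Write the kernel of $e^{it_jk(\mathbf{A}_k-m/k)}T^{-1}$ near $p$ as in \eqref{equation:schwartz}. Then:
\begin{itemize}
\item the $x_2$-integral is an exact Gaussian and produces $e^{-(\eta_2-\xi_2)^2/2h}$;
\item the $\xi_2$-integral then gives $\sqrt{2\pi h}\,\chi(\eta_2)+\mathcal{O}(h^{3/2})$;
\item stationary phase in $(x_1,\eta_1)$, which is your nondegenerate $t$-direction, leaves $c\,h^{-1/2}\int\chi(\eta_2)a(\eta_2)e^{\frac{i}{h}\phi(p,(0,\eta_2))}\,\dd\eta_2+\mathcal{O}(h^{1/2})$.
\end{itemize}
The resulting phase is real. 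Its critical points are exactly the points of the torus above $p$, because $\partial_{\eta_2}\phi(p,(0,\eta_2))$ is the $x_2$-coordinate. At $p=p_0$ this phase is constant by \eqref{equation:calculs}, which recovers Lemma~\ref{lemma:linfty}.

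Your fallback is misstated. Knowing only that ``$\partial_\theta\varphi_j$ does not vanish identically'' does not give $|\{|\partial_\theta\varphi_j|\le h^{1/2-\delta}\}|=o(1)$, since a smooth function can vanish on a set of positive measure. You need the zero set to be null. That comes from the finiteness of preimages (Lemma~\ref{lemma:nmber-bounded}), not from $\partial_\theta\Psi\neq 0$. Indeed, $\partial_\theta\Psi\neq 0$ also holds at a fold, where $\partial_\theta\Psi$ is parallel to $\partial_t\Psi$ and the beams move off $p$ only at second order.
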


\begin{proof}
We use \cite[Theorem 4]{Galkowski-Toth-18} (this reference deals with the standard Laplace-Beltrami operator, but the proof extends to the magnetic Laplacian as well). Recall that $C(p,E)$ denotes the intersection of $T^*_p\Sigma$ with the energy shell $\{p=E\}$. For $p \in \Sigma$, and $T > 0$, we let
\[
\Lambda_{p,T} := \bigcup_{t \in [-T,T]} \Phi_t(C(p,E)).
\]

To apply \cite[Theorem 4]{Galkowski-Toth-18}, it suffices to show that there exists $T > 0$ such that the support of the defect measure $\mu$ associated with $(\mathbf{u}_k)_{k \geq 0}$ intersected with $\Lambda_{p,T}$ has Hausdorff dimension $< 2$. However, it follows from Theorem \ref{theorem:defect} and \eqref{equation:local-diffeo} that this intersection consists of (at most) a $1$-dimensional immersed submanifold, thus proving the claim.
\end{proof}

\subsection{Polynomial improvement}\label{improvsmallball} Finally, we establish Theorem \ref{theorem:criticalregime}.

\begin{proof}[Proof of Theorem \ref{theorem:criticalregime}] Let $x \in \Sigma$ and $\chi_\delta \in C^\infty(\Sigma)$ be a nonnegative bump function equal to $1$ on $B(x,C\delta)$ and $0$ outside of $B(x,2C\delta)$, where $C > 0$ is given by Proposition \ref{proposition:bound}. This function can be constructed such that
\begin{equation}
\label{equation:blowup}
\|\chi_\delta\|_{C^N(\Sigma)} \leq C_N \delta^{-N},
\end{equation}
for all $N \geq 0$.  We take $\delta = k^{-\eps}$, where $0 < \eps < 1/2$ is to be chosen later. Applying Proposition \ref{proposition:bound} with the integer $N=2026$, we find (denoting by $\dd x$ the Riemannian volume):
\[
|u_k(x)|^2\lesssim k^{1+\eps} \int_{D(x,C\delta)} |u_k(x)|^2 \dd x + k^{-2026} \lesssim k^{1+\eps} \langle \chi_\delta u_k,u_k \rangle_{L^2}+ k^{-2026}.
\]

Using \eqref{equation:concentration} and \eqref{equation:blowup}, we find
\[
\begin{split}
 \langle \chi_\delta u_k, u_k \rangle_{L^2(\Sigma, L^{\otimes k})} & = \int_{\Sigma} \chi_\delta(x) \dd x + \mc{O}(k^{-\theta \min(\ell,1/15)/4100}\|\chi_\delta\|_{C^{17}}) \\
 & = \mc{O}(\delta^2 + k^{-\theta \min(\ell,1/15)/4100}\delta^{-17}) = \mc{O}(k^{-2\eps} + k^{-\theta \min(\ell,1/15)/4100}k^{17 \eps}).
 \end{split}
 \]
 
Combining the previous estimates yields
\[
\|u_k\|_{L^\infty}^2 \lesssim k^{1-\eps} + k^{1+18\eps-\alpha} + k^{-2026},
\]
where $\alpha := \theta \min(\ell,1/15)/4100$. The optimal choice for $\eps$ is $\eps = \alpha/19$. Consequently, we find:
\[
\|u_k\|_{L^{\infty}} \lesssim k^{1/2- \theta \min(\ell,1/15)/155800}.
\]
This proves the claim.
\end{proof}

\bibliographystyle{alpha}
\bibliography{Biblio}

\end{document}